\newcommand{\res}{{\upharpoonright}}
\newcommand{\NN}{\mathbb{N}}
\newcommand{\QQ}{\mathbb{Q}}
\newcommand{\cat}{{^\smallfrown}}
\newcommand{\llb}{\llbracket}
\newcommand{\rrb}{\rrbracket}
\newcommand{\liff}{\Leftrightarrow}
\newcommand{\limp}{\Rightarrow}
\newcommand{\wt}{\mathrm{wt}}
\newcommand{\dwt}{\mathrm{dwt}}
\newcommand{\pwt}{\mathrm{pwt}}
\newcommand{\vwt}{\mathrm{vwt}}
\newcommand{\effdim}{\mathrm{effdim}}
\newcommand{\PA}{\mathsf{PA}}
\newcommand{\WKLo}{\mathsf{WKL}_0}
\newcommand{\DNR}{\mathrm{DNR}}
\newcommand{\KA}{\mathrm{KA}}
\newcommand{\KP}{\mathrm{KP}}
\newcommand{\KS}{\mathrm{KS}}
\newcommand{\BC}{\mathrm{BC}}
\newcommand{\LR}{\mathrm{LR}}
\newcommand{\leT}{\leq_{\mathrm{T}}}
\newcommand{\leLR}{\leq_\LR}
\newcommand{\nleLR}{\nleq_\LR}
\newcommand{\Ybar}{{\overline{Y}}}
\theoremstyle{definition}
\newtheorem{thm}{Theorem}[section]
\newtheorem{lem}[thm]{Lemma}
\newtheorem{cor}[thm]{Corollary}
\newtheorem{dfn}[thm]{Definition}
\newtheorem{rem}[thm]{Remark}
\begin{document}

\begin{center}
  {\LARGE Propagation of partial randomness}\\[14pt]
  {\large Kojiro Higuchi}\\
  Department of Mathematics and Informatics\\
  Faculty of Science, Chiba University\\
  1-33 Yayoi-cho, Inage, Chiba, JAPAN\\
  \href{mailto:khiguchi@g.math.s.chiba-u.ac.jp}{khiguchi@g.math.s.chiba-u.ac.jp}\\[9pt]
  {\large W. M. Phillip Hudelson}\\
  One Oxford Center, Suite 2600\\
  301 Grant Street\\
  Pittsburgh, PA 15219, USA\\
  \href{mailto:phil.hudelson@gmail.com}{phil.hudelson@gmail.com}\\[9pt]
  {\large Stephen G. Simpson}\\
  Department of Mathematics\\
  Pennsylvania State University\\
  University Park, PA 16802, USA\\
  \href{http://www.personal.psu.edu/t20}{http://www.personal.psu.edu/t20}\\
  \href{mailto:t20@psu.edu}{t20@psu.edu}\\[9pt]
  {\large Keita Yokoyama}\\
  School of Information Science\\
  Japan Advanced Institute of Science and Technology\\
  1-1 Asahidai, Nomi, Ishikawa, 923-1292, JAPAN\\
  \href{mailto:y-keita@jaist.ac.jp}{y-keita@jaist.ac.jp}\\[12pt]
  First draft: April 21, 2011\\
  This draft: \today
\end{center}

\addcontentsline{toc}{section}{Abstract}

\begin{abstract}
  Let $f$ be a computable function from finite sequences of $0$'s and
  $1$'s to real numbers.  We prove that strong $f$-randomness implies
  strong $f$-randomness relative to a $\PA$-degree.  We also prove: if
  $X$ is strongly $f$-random and Turing reducible to $Y$ where $Y$ is
  Martin-L\"of random relative to $Z$, then $X$ is strongly $f$-random
  relative to $Z$.  In addition, we prove analogous propagation
  results for other notions of partial randomness, including
  non-K-triviality and autocomplexity.  We prove that $f$-randomness
  relative to a $\PA$-degree implies strong $f$-randomness, hence
  $f$-randomness does not imply $f$-randomness relative to a
  $\PA$-degree.
\end{abstract}

\medskip

\noindent Keywords: partial randomness, effective Hausdorff dimension,
Martin-L\"of randomness, Kolmogorov complexity, models of arithmetic.

\medskip

\noindent 2010 Mathematics Subject Classification: Primary 03D32,
Secondary 03D28, 68Q30, 03H15, 03C62, 03F30.

\medskip

\noindent A version of this paper will appear in \emph{Annals of Pure
  and Applied Logic}.

\newpage

\tableofcontents

\section{Introduction}
\label{sec:intro}

We begin by recalling two known results concerning Martin-L\"of
randomness relative to a Turing oracle.  Let $\NN$ denote the set of
positive integers.  Let $\{0,1\}^\NN$ denote the \underline{Cantor
  space}, i.e., the set of infinite sequences of $0$'s and $1$'s.

\begin{thm}
  \label{thm:mlrXYZ}
  Let $X\in\{0,1\}^\NN$ be Martin-L\"of random.  Suppose $X$ is Turing
  reducible to $Y$ where $Y$ is Martin-L\"of random relative to $Z$.
  Then $X$ is Martin-L\"of random relative to $Z$.
\end{thm}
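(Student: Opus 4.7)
The plan is to argue by contradiction: assume $X$ is not Martin-L\"of random relative to $Z$ and exhibit either a violation of the (unrelativized) ML-randomness of $X$ or of the $Z$-ML-randomness of $Y$. Fix a Turing functional $\Phi$ with $\Phi^Y=X$, and fix a universal $Z$-Martin-L\"of test $(U_n)_{n\in\NN}$, so $X\in\bigcap_n U_n$ and $\mu(U_n)\le 2^{-n}$. The natural move is to pull the test back through $\Phi$: set
\[
V_n \;=\; \{Y'\in\{0,1\}^\NN : \Phi^{Y'}\in U_n\},
\]
which is $Z$-c.e.\ open and contains $Y$. If we could show $\mu(V_n)\le 2^{-h(n)}$ for some computable $h\to\infty$, we would obtain a $Z$-ML-test capturing $Y$, contradicting the hypothesis on $Y$.

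The main obstacle is precisely this measure bound: a Turing functional need not be measure-preserving, so in principle $\mu(V_n)$ may be far larger than $\mu(U_n)$. Exactly here one must use the unrelativized ML-randomness of $X$, which has not yet entered the argument. The idea is to split $V_n$ along the dichotomy ``$\Phi^{Y'}$ is ML-random'' versus ``$\Phi^{Y'}$ is not ML-random''. On the second part, $Y'$ computes a non-ML-random real, and that behaviour is captured by an unrelativized universal ML-test $(M_k)$; since $Y$ itself is $Z$-ML-random, hence ML-random, $Y$ is not in $\bigcap_k \{Y' : \Phi^{Y'}\in M_k\}$, so for an appropriate $k=k(n)$ the ``bad'' portion of $V_n$ does not contain $Y$ and may be discarded.

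The technical heart of the proof is then to bound the measure of the remaining ``good'' part
\[
V_n' \;=\; \{Y' : \Phi^{Y'}\in U_n \text{ and } \Phi^{Y'}\notin M_{k(n)}\}.
\]
Here one uses a randomness-conservation estimate: on the set of $Y'$ for which $\Phi^{Y'}$ is forced to remain in the complement of $M_{k(n)}$, the pushforward of Lebesgue measure under $\Phi$ is absolutely continuous with respect to Lebesgue measure, with Radon--Nikodym derivative effectively bounded. Choosing $k(n)$ to grow slowly enough that this bound stays manageable but fast enough that $Y$ escapes $M_{k(n)}$, one derives $\mu(V_n')\le c\cdot 2^{-n}$ for a fixed constant $c$. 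Thinning and renumbering yields a $Z$-ML-test containing $Y$, contradicting the $Z$-ML-randomness of $Y$.

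I expect the delicate step to be the quantitative conservation estimate together with the correct choice of $k(n)$; everything else (effectivity of $V_n$, extraction of a test from $(V_n')$) is standard bookkeeping. The unrelativized hypothesis on $X$ is used solely to justify discarding the ``$\Phi^{Y'}$ non-random'' portion of the pullback, and this is what makes the argument go through without any additional assumption on the oracle $Z$.
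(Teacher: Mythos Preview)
Your overall strategy --- pull the $Z$-test back through $\Phi$ and use the unrelativized randomness of $X$ to control the measure of the pullback --- is the right shape, and it is essentially how the paper proceeds (the paper does not prove Theorem~\ref{thm:mlrXYZ} separately but obtains it as the special case $f(\sigma)=|\sigma|$ of Theorem~\ref{thm:sfrXYZ}). However, the step you yourself flag as ``delicate'' is genuinely incomplete as written.

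First a small slip: the fact that $Y\notin\bigcap_k\{Y':\Phi^{Y'}\in M_k\}$ is not a consequence of $Y$ being ML-random --- the sets $\{Y':\Phi^{Y'}\in M_k\}$ need not have small measure and so do not form a test on $Y$. It follows directly from the hypothesis that $X=\Phi^Y$ is ML-random, whence $X\notin M_c$ for one fixed $c$. In particular there is no balancing act: $k(n)$ need not vary with $n$ at all.

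The real gap is effectivity. Your set $V_n'=\{Y':\Phi^{Y'}\in U_n\text{ and }\Phi^{Y'}\notin M_{k}\}$ is the difference of two $\Sigma^{0,Z}_1$ sets and is not itself $\Sigma^{0,Z}_1$, so it cannot serve as a component of a $Z$-ML-test. Equivalently, ``the Radon--Nikodym derivative is $\le 2^c$ at this point'' is a $\Pi^0_1$ condition, and one cannot simply restrict to the locus where it holds. The paper's remedy is to work with the Levin system $V_\sigma=\{\Ybar:\Phi^{\Ybar}\supseteq\sigma\}$ and to \emph{truncate} each $V_\sigma$ to a uniformly $\Sigma^0_1$ subset $\widetilde V_\sigma$ of measure $\le 2^{c-|\sigma|}$ (Lemma~\ref{lem:tilde}). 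Lemma~\ref{lem:sfr} --- your ``conservation estimate'' made precise --- says that the ML-randomness of $X$ forces $\mu(V_{X\res n})\le 2^{c-n}$ for all $n$, so the truncation does nothing along $X$ and $Y$ remains in every $\widetilde V_{X\res n}$. One then sets $W_i^Z=\bigcup_{\sigma\in A_i^Z}\widetilde V_\sigma$ for a $Z$-r.e.\ generator $A_i^Z$ of $U_i$; disjointness of the $\widetilde V_\sigma$ over the minimal elements of $A_i^Z$ yields $\mu(W_i^Z)\le 2^{c-i}$, and $(W_i^Z)$ is the $Z$-test that captures $Y$.
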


\begin{thm}
  \label{thm:mlrQ}
  Let $Q$ be a nonempty $\Pi^0_1$ subset of $\{0,1\}^\NN$.  If
  $X\in\{0,1\}^\NN$ is Martin-L\"of random, then $X$ is Martin-L\"of
  random relative to some $Z\in Q$.
\end{thm}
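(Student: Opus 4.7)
The plan is to construct an unrelativized Martin-L\"of test capturing those $X$ for which no $Z\in Q$ makes $X$ random relative to $Z$, and then to apply the Martin-L\"of randomness of $X$ to escape this test. Concretely, fix a universal oracle Martin-L\"of test $\{U^Z_n\}_{n\in\NN}$, so that $X$ is Martin-L\"of random relative to $Z$ iff $X\notin\bigcap_n U^Z_n$. For each $n$ I set
\[
  W_n \;=\; \{X\in\{0,1\}^\NN : X\in U^Z_n \text{ for every } Z\in Q\}.
\]
If $\{W_n\}$ turns out to be a Martin-L\"of test, then $X\notin\bigcap_n W_n$, so for some $n$ there is a $Z\in Q$ with $X\notin U^Z_n$, and this $Z$ witnesses the theorem.

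To verify that $\{W_n\}$ is a Martin-L\"of test, two things are needed: (a)~$W_n$ is $\Sigma^0_1$ uniformly in $n$, and (b)~$\mu(W_n)\leq 2^{-n}$. Part (b) is immediate because $Q$ is nonempty, so $W_n\subseteq U^{Z_0}_n$ for any fixed $Z_0\in Q$. Part (a) is the main step and is where compactness of $Q$ enters. The complement of $W_n$ is
\[
  \{0,1\}^\NN\setminus W_n \;=\; \{X : \exists Z\in Q\,(X\notin U^Z_n)\},
\]
which is the projection to the first coordinate of the set $A_n=\{(X,Z) : Z\in Q\text{ and }X\notin U^Z_n\}\subseteq\{0,1\}^\NN\times\{0,1\}^\NN$. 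Both conjuncts defining $A_n$ are $\Pi^0_1$, uniformly in $n$, so $A_n$ is $\Pi^0_1$ uniformly. Writing $A_n=\bigcap_s A_{n,s}$ as a decreasing intersection of clopen sets computable in $(n,s)$, compactness of $\{0,1\}^\NN\times\{0,1\}^\NN$ gives $\pi_1(A_n)=\bigcap_s \pi_1(A_{n,s})$, each projection clopen and computable from $A_{n,s}$. Hence $\pi_1(A_n)$ is $\Pi^0_1$ uniformly in $n$, so $W_n$ is $\Sigma^0_1$ uniformly in $n$.

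With (a) and (b) in hand, $\{W_n\}$ is an unrelativized Martin-L\"of test, so the Martin-L\"of randomness of $X$ yields some $n$ with $X\notin W_n$, and then some $Z\in Q$ with $X\notin U^Z_n$; by universality of the oracle test, $X$ is Martin-L\"of random relative to $Z$. The principal obstacle is the effective projection step in (a); it is the place where the hypothesis that $Q$ is $\Pi^0_1$ (hence compact) is essential, since without compactness the projection of a $\Pi^0_1$ set need not be $\Pi^0_1$, and the tests $\{W_n\}$ would not in general be $\Sigma^0_1$.
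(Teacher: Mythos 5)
Your proposal is correct and takes essentially the same route as the argument the paper has in mind: form $W_n=\bigcap_{Z\in Q}U_n^Z$, use compactness of $Q$ to show $W_n$ is uniformly $\Sigma^0_1$, bound $\mu(W_n)\le 2^{-n}$ by nonemptiness of $Q$, and then apply Martin-L\"of randomness of $X$ to escape the test. This is exactly the strategy the paper attributes to Downey et al.\ and then generalizes in Theorem \ref{thm:sfrQ}, where the extra machinery (vehement $f$-weight, Remark \ref{rem:vwt}) exists only to recover the monotonicity property that is automatic in the Martin-L\"of case you are handling.
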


Recall from \cite{js} that a \underline{$\PA$-degree} is defined to be
the Turing degree of a complete consistent theory extending
first-order Peano arithmetic.  It is well known that, via G\"odel
numbering, the set of complete consistent theories extending
first-order Peano arithmetic may be viewed as a $\Pi^0_1$ subset of
$\{0,1\}^\NN$.  Moreover
\nocite{rec-fcn-thy,rm2001}\cite{js,scott-binum,massrand,pizowkl},
this particular $\Pi^0_1$ subset of $\{0,1\}^\NN$ is \emph{universal}
in the following sense: a Turing oracle $Z$ is of $\PA$-degree if and
only if every nonempty $\Pi^0_1$ subset of $\{0,1\}^\NN$ contains an
element which is Turing reducible to $Z$.  Consequently, Theorem
\ref{thm:mlrQ} may be restated as follows:
\begin{thm}
  \label{thm:mlrPA}
  If $X\in\{0,1\}^\NN$ is Martin-L\"of random, then $X$ is
  Martin-L\"of random relative to some $\PA$-degree.
\end{thm}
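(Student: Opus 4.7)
The plan is essentially to observe that Theorem \ref{thm:mlrPA} is an immediate specialization of Theorem \ref{thm:mlrQ}, once one invokes the universality property of the $\Pi^0_1$ set of completions of Peano arithmetic that is recalled in the paragraph preceding the statement.

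More concretely, first I would fix the specific $\Pi^0_1$ subset $Q \subseteq \{0,1\}^\NN$ obtained, via G\"odel numbering, as the set of (characteristic functions of) complete consistent extensions of first-order Peano arithmetic. I would note that this $Q$ is nonempty, since by the Lindenbaum lemma $\PA$ has at least one complete consistent extension. Next I would apply Theorem \ref{thm:mlrQ} with this choice of $Q$ to the given Martin-L\"of random $X$, obtaining some $Z \in Q$ such that $X$ is Martin-L\"of random relative to $Z$.

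Finally I would invoke the universality property cited from \cite{js,scott-binum,massrand,pizowkl}: since $Z$ itself belongs to the particular $\Pi^0_1$ class $Q$ of completions of $\PA$, the Turing degree of $Z$ is a $\PA$-degree (indeed, $Z$ computes a member of every nonempty $\Pi^0_1$ class, namely itself as a member of $Q$, and conversely any completion of $\PA$ has $\PA$-degree by definition). Hence $X$ is Martin-L\"of random relative to a $\PA$-degree, as desired.

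There is essentially no obstacle here; the only content is the identification of $\PA$-degrees with the Turing degrees of members of this universal $\Pi^0_1$ class, and this identification is precisely what the paper has just quoted from the literature. So the proof is a one-line deduction: apply Theorem \ref{thm:mlrQ} to the class of completions of $\PA$.
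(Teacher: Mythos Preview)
Your proposal is correct and matches the paper's approach exactly: the paper presents Theorem~\ref{thm:mlrPA} simply as a restatement of Theorem~\ref{thm:mlrQ} obtained by taking $Q$ to be the $\Pi^0_1$ class of completions of $\PA$, exactly as you do.
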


Theorem \ref{thm:mlrXYZ}, which we call the \emph{XYZ Theorem}, is due
to Miller/Yu \cite[Theorem 4.3]{mi-yu-1}.  Theorems \ref{thm:mlrQ} and
\ref{thm:mlrPA} are due independently to several groups of
researchers: Downey et al \cite[Proposition 7.4]{do-hi-mi-ni},
Reimann/Slaman \cite[Theorem 4.5]{re-sl-measures} (also cited in
\cite{do-hi-mi-ni}), and Simpson/Yokoyama \cite[Lemma 3.3]{nswwkl}.

Theorems \ref{thm:mlrQ} and \ref{thm:mlrPA} have been very useful in
the study of randomness.  Reimann/Slaman \cite{re-sl-measures} used
Theorem \ref{thm:mlrQ} to prove that any noncomputable
$X\in\{0,1\}^\NN$ is nonatomically random with respect to some
probability measure on $\{0,1\}^\NN$.  Simpson/Yokoyama \cite{nswwkl}
used a generalization of Theorem \ref{thm:mlrQ} to study the reverse
mathematics of Loeb measure.  Recently Brattka/Miller/Nies
\cite{br-mi-ni} used Theorem \ref{thm:mlrQ} to prove that $x\in[0,1]$
is random if and only if every computable continuous function of
bounded variation is differentiable at $x$.

The theme of Theorems \ref{thm:mlrXYZ}--\ref{thm:mlrPA} is what might
be called ``propagation of Martin-L\"of randomness.''  Namely, all of
these theorems assert that if $X$ is Martin-L\"of random then $X$ is
Martin-L\"of random relative to certain Turing oracles.

The purpose of this paper is to present some new results which are
generalizations of Theorems \ref{thm:mlrXYZ}--\ref{thm:mlrPA}.  The
theme of our new results might be called ``propagation of partial
randomness.''  Here ``partial randomness'' refers to certain
properties which are in the same vein as Martin-L\"of randomness.
Recent studies of partial randomness include \nocite{9asian}
\cite{ca-st-te,hudelson-complex,kms-tams,miller-dim,reimann-phd,re-st-tests,tadaki-A}.
Our main new results involve a specific notion of partial randomness
known as \emph{strong $f$-randomness} where $f$ is an arbitrary
computable function from finite sequences of $0$'s and $1$'s to real
numbers.  Along the way we present some old and new characterizations
of strong $f$-randomness.  We also consider other notions of partial
randomness including \emph{complexity}
\cite{binns-complex,hi-ki-null,kms-tams}, \emph{autocomplexity}
\cite{kms-tams}, and \emph{non-K-triviality}
\cite{do-hi-book,nies-book}.

The plan of this paper is as follows.  In \S\ref{sec:frsfr} we define
\emph{$f$-randomness} and \emph{strong $f$-randomness} and
characterize these notions in terms of Kolmogorov complexity.  In
\S\ref{sec:grsfr} we prove that $(f+2\log_2f)$-randomness implies
strong $f$-randomness.  Note that \S\ref{sec:frsfr} and
\S\ref{sec:grsfr} and \S\ref{sec:vfr} are largely expository.  In
\S\ref{sec:sfr} we present our main results concerning propagation of
strong $f$-randomness.  Namely, we prove appropriate generalizations
of Theorems \ref{thm:mlrXYZ}--\ref{thm:mlrPA} with Martin-L\"of
randomness replaced by strong $f$-randomness.  In \S\ref{sec:nkt} and
\S\ref{sec:dnr} we prove analogous results concerning propagation of
non-K-triviality and propagation of diagonal nonrecursiveness,
respectively.  In \S\ref{sec:auto} we prove analogous results
concerning propagation of autocomplexity, and we characterize
autocomplexity in terms of $f$-randomness and strong $f$-randomness.
In \S\ref{sec:vfr} we define \emph{vehement $f$-randomness} and prove
that it is equivalent to strong $f$-randomness, provided $f$ is
\emph{convex}.  In \S\ref{sec:vfrPA} we prove a version of Theorem
\ref{thm:mlrQ} with Martin-L\"of randomness replaced by vehement
$f$-randomness.  In \S\ref{sec:sfrPA} we present two new
characterizations of strong $f$-randomness.  In \S\ref{sec:non-pfr} we
show that our results concerning propagation of strong $f$-randomness
fail for $f$-randomness.

\section{$f$-randomness and strong $f$-randomness}
\label{sec:frsfr}

Let $f:\{0,1\}^*\to[-\infty,\infty]$ be an arbitrary computable
function from finite sequences of $0$'s and $1$'s to the
extended\footnote{We define $f:\{0,1\}^*\to[-\infty,\infty]$ to be
  computable if $f/(|f|+1):\{0,1\}^*\to[-1,1]$ is computable.} real
numbers.  In this section we define what it means for
$X\in\{0,1\}^\NN$ to be $f$-random, strongly $f$-random, $f$-complex,
and strongly $f$-complex.

Recall that according to Schnorr's Theorem (see \cite[Theorem
6.2.3]{do-hi-book} or \cite[Theorem 3.2.9]{nies-book} or \cite[Theorem
10.7]{aedsh}), $X$ is Martin-L\"of random if and only if for all $n$
the prefix-free Kolmogorov complexity of the first $n$ bits of $X$ is
at least $n$ modulo an additive constant.  In this section we prove
generalizations of Schnorr's Theorem, replacing Martin-L\"of
randomness by $f$-randomness and strong $f$-randomness.  Our proofs
are modeled on one of the standard proofs \cite[Theorem 10.7]{aedsh}
of Schnorr's Theorem.

This section is mostly expository.  For the history of the concepts
and results in this section, see Calude/Staiger/Terwijn
\cite{ca-st-te} and Tadaki \cite{tadaki-A}.

\begin{dfn}
  \label{dfn:fcsfc}
  For $X\in\{0,1\}^\NN$ and $n\in\NN$ we write $X\res
  n=X\res\{1,\ldots,n\}=$ the first $n$ bits of $X$.  Given
  $f:\{0,1\}^*\to[-\infty,\infty]$ we define $X$ to be
  \underline{$f$-complex} or \underline{strongly $f$-complex} if
  \begin{center}
    $\exists c\,\forall n\,(\KP(X\res n)\ge f(X\res n)-c)$
  \end{center}
  or
  \begin{center}
    $\exists c\,\forall n\,(\KA(X\res n)\ge f(X\res n)-c)$
  \end{center}
  respectively.  Here $\KP$ and $\KA$ denote \underline{prefix-free
    complexity} (see \cite[\S3.5]{do-hi-book} or
  \cite[\S2.2]{nies-book} or \cite[\S10]{aedsh}) and \underline{a
    priori complexity} (see \cite[\S3.16]{do-hi-book} or \cite{us-sh})
  respectively.
\end{dfn}

\begin{dfn}
  \label{dfn:dwtpwt} 
  Given $f:\{0,1\}^*\to[-\infty,\infty]$, the \underline{$f$-weight}
  of $\sigma\in\{0,1\}^*$ is defined as
  $\wt_f(\sigma)=2^{-f(\sigma)}$.  The \underline{direct $f$-weight}
  of $A\subseteq\{0,1\}^*$ is defined as $\dwt_f(A)=\sum_{\sigma\in
    A}\wt_f(\sigma)$.  A set $P\subseteq\{0,1\}^*$ is said to be
  \underline{prefix-free} if no element of $P$ is a proper initial
  segment of an element of $P$.  The \underline{prefix-free
    $f$-weight} of $A$ is defined as
  \begin{center}
    $\pwt_f(A)=\sup\{\dwt_f(P)\mid P\subseteq A$ is prefix-free$\}$.
  \end{center}
\end{dfn}

\begin{dfn}
  For $\sigma\in\{0,1\}^*$ we write
  $\llb\sigma\rrb=\{X\in\{0,1\}^\NN\mid\sigma\subset X\}$.  For
  $A\subseteq\{0,1\}^*$ we write $\llb A\rrb=\bigcup_{\sigma\in
    A}\llb\sigma\rrb$ and
  \begin{center}
    $\widehat{A}=\{\sigma\in
    A\mid\nexists\rho\,(\rho\subset\sigma$ and $\rho\in A)\}=$ the
    set of minimal elements of $A$.
  \end{center}
  Note that $\widehat{A}$ is prefix-free and
  $\llb\widehat{A}\rrb=\llb A\rrb$.
\end{dfn}

We write \underline{r.e.} as an abbreviation for \emph{recursively
  enumerable}.  A sequence of sets $A_i\subseteq\{0,1\}^*$, $i\in\NN$
is said to be \underline{uniformly r.e.} if $\{(\sigma,i)\mid\sigma\in
A_i\}$ is r.e.

\begin{dfn}
  \label{dfn:frsfr}
  Assume that $f:\{0,1\}^*\to[-\infty,\infty]$ is computable.  We
  define $X\in\{0,1\}^\NN$ to be \underline{$f$-random} or
  \underline{strongly $f$-random} if $X\notin\bigcap_i\llb A_i\rrb$
  whenever $A_i$ is uniformly r.e.\ with $\dwt_f(A_i)\le2^{-i}$ or
  $\pwt_f(A_i)\le2^{-i}$ respectively.
\end{dfn}

\begin{rem}
  Since $\pwt_f(A)\le\dwt_f(A)$ for all $A$, it is clear that strong
  $f$-randomness implies $f$-randomness.  Similarly, since $\exists
  c\,\forall\tau\,(\KA(\tau)\le\KP(\tau)+c)$, it is clear that strong
  $f$-complexity implies $f$-complexity.  Note also that $\wt_f$ is a
  \emph{premeasure} in the sense of \cite[Definition 1]{reimann-eff}.
\end{rem}

The next theorem is a straightforward generalization of Tadaki
\cite[Theorem 3.1]{tadaki-A}.

\begin{thm}
  \label{thm:rc}
  Let $f:\{0,1\}^*\to[-\infty,\infty]$ be computable.  Then
  $f$-randomness is equivalent to $f$-complexity.
\end{thm}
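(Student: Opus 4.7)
My plan is to prove the equivalence in both directions by contrapositive, along the lines of the standard proof of Schnorr's theorem (as in \cite[Theorem 10.7]{aedsh}) adapted to the weight $\wt_f$. The forward direction will use Kraft's inequality; the reverse direction will use the Kraft--Chaitin (KC) theorem.

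For the $(\Rightarrow)$ direction, I would take the obvious candidate test. For each $i\in\NN$, let $A_i=\{\sigma\in\{0,1\}^* : \KP(\sigma)\leq f(\sigma)-i\}$. This family is uniformly r.e.\ because $\KP$ is upper semicomputable and $f$ is computable. Whenever $\sigma\in A_i$ we have $2^{-f(\sigma)}\leq 2^{-i}\cdot 2^{-\KP(\sigma)}$, so Kraft's inequality gives $\dwt_f(A_i)\leq 2^{-i}\sum_\sigma 2^{-\KP(\sigma)}\leq 2^{-i}$. If $X$ is not $f$-complex then for each $i$ some initial segment $X\res n$ lies in $A_i$, so $X\in\bigcap_i\llb A_i\rrb$, contradicting $f$-randomness of $X$.

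For the $(\Leftarrow)$ direction, suppose $X$ is not $f$-random via a uniformly r.e.\ sequence $(A_i)$ with $\dwt_f(A_i)\leq 2^{-i}$ and $X\in\bigcap_i\llb A_i\rrb$. I would feed the KC theorem the r.e.\ request set consisting of all pairs $(\ell_{i,\sigma},\sigma)$ with $\sigma\in A_i$ and $\ell_{i,\sigma}=\lceil f(\sigma)\rceil - i + 2\lceil\log_2 i\rceil + c_0$, replacing $\lceil f(\sigma)\rceil$ by any computable r.e.\ integer upper bound on $f(\sigma)$ (absorbing the $O(1)$ slack into $c_0$). The total weight is bounded by
\[
  \sum_{i\geq 1}\sum_{\sigma\in A_i}2^{-\ell_{i,\sigma}}
  \;\leq\; 2^{-c_0}\sum_{i\geq 1}\frac{2^i}{i^2}\,\dwt_f(A_i)
  \;\leq\; 2^{-c_0}\sum_{i\geq 1}\frac{1}{i^2},
\]
which can be arranged to be at most $1$ by choosing $c_0$ large enough. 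The KC theorem then produces a prefix-free machine $M$ with $\KP_M(\sigma)\leq\ell_{i,\sigma}$ whenever $\sigma\in A_i$, so $\KP(\sigma)\leq f(\sigma)-i+2\log_2 i+O(1)$ for every such $\sigma$. Picking $n_i$ with $X\res n_i\in A_i$, we get $\KP(X\res n_i)-f(X\res n_i)\to-\infty$ as $i\to\infty$, and hence $X$ is not $f$-complex.

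The only real obstacle is administrative: $f$ is real-valued and may take the values $\pm\infty$, so I must round up to integer code lengths (losing $O(1)$ bits absorbed into $c_0$), handle $f(\sigma)=+\infty$ (where the summand $2^{-f(\sigma)}$ vanishes and no request is needed) and $f(\sigma)=-\infty$ (where the complexity inequality is vacuous) separately, and use the fact that an r.e.\ integer upper bound on $f(\sigma)$ within one of the true value is uniformly computable from the approximation to $f/(|f|+1)$. The underlying accounting --- pairing the $2^{-i}$ weight of $A_i$ with the damping $2^{-2\log_2 i}$ so that $\sum 1/i^2<\infty$, while $i-2\log_2 i\to\infty$ still drives $\KP(X\res n_i)-f(X\res n_i)$ below every constant $-c$ --- is the standard KC manoeuvre once the above technicalities are cleared away.
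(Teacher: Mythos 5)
Your proof is correct and follows essentially the same route as the paper: the forward direction is the same test $\{\sigma : \KP(\sigma) \le f(\sigma)-i\}$ bounded by Kraft's inequality, and the reverse direction is the standard Kraft/Chaitin request-set argument. The only difference is cosmetic bookkeeping in the reverse direction: the paper passes to the even subsequence $A_{2i}$ and requests length $f(\tau)-i$ on $\tau\in A_{2i}$, so that $\sum_i 2^i\dwt_f(A_{2i})\le\sum_i 2^{-i}=1$ directly, whereas you keep all $A_i$ and insert a $2\log_2 i$ damping so that the total weight is controlled by $\sum 1/i^2$. Both choices make $\KP(X\res n)-f(X\res n)$ go to $-\infty$ along a subsequence; the paper's subsequence trick is marginally cleaner because the gain per level is an honest $i$ rather than $i-2\log_2 i$, and no rounding of the damping term is needed. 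Your remarks about rounding $f$ to integers and handling $\pm\infty$ are the right things to say; the paper simply leans on the cited form of the Kraft/Chaitin Lemma to absorb them.
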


\begin{proof}
  Suppose $X$ is $f$-random.  Let
  $S_i=\{\tau\mid\KP(\tau)<f(\tau)-i\}$.  Clearly $S_i$ is uniformly
  r.e., and by Kraft's Inequality \cite[Theorem 10.3]{aedsh} we have
  \[
  \dwt_f(S_i)=\sum_{\tau\in S_i}2^{-f(\tau)}\le\sum_{\tau\in
    S_i}2^{-\KP(\tau)-i}=2^{-i}\sum_{\tau\in S_i}2^{-\KP(\tau)}<2^{-i}
  \]
  so $S_i$ is a test for $f$-randomness.  Since $X$ is $f$-random it
  follows that $X\notin\bigcap_i\llb S_i\rrb$, i.e., $\exists
  i\,\forall n\,(\KP(X\res n)\ge f(X\res n)-i)$, i.e., $X$ is
  $f$-complex.

  Now suppose $X$ is not $f$-random, say $X\in\bigcap_i\llb A_i\rrb$
  where $A_i$ is uniformly r.e.\ and $\dwt_f(A_i)\le2^{-i}$.  Then
  \[
  \sum_i\sum_{\tau\in
    A_{2i}}2^{-f(\tau)+i}=\sum_i2^i\dwt_f(A_{2i})\le\sum_i2^i2^{-2i}
  =\sum_i2^{-i}=1
  \]
  so by the Kraft/Chaitin Lemma (see \cite[Corollary 10.6]{aedsh}) we
  have
  \begin{center}
    $\exists c\,\forall i\,\forall\tau\,(\tau\in
    A_{2i}\limp\KP(\tau)\le f(\tau)-i+c)$.
  \end{center}
  Since $X\in\bigcap_i\llb A_{2i}\rrb$ it follows that $\exists
  c\,\forall i\,\exists n\,(\KP(X\res n)\le f(X\res n)-i+c)$.  In
  other words, $X$ is not $f$-complex.  This completes the proof.
\end{proof}

\begin{cor}
  \label{cor:rc}
  The sets $S_i=\{\tau\mid\KP(\tau)<f(\tau)-i\}$ form a universal
  test for $f$-randomness.
\end{cor}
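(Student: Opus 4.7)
The plan is to recognize that this corollary is a direct packaging of what the proof of Theorem \ref{thm:rc} already delivered, so essentially no new work is needed beyond verifying the two requirements for being a universal test: (i) $\{S_i\}_{i\in\NN}$ is itself an $f$-randomness test, and (ii) every $X$ that fails $f$-randomness is caught by this particular test, i.e., lies in $\bigcap_i\llb S_i\rrb$.

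For (i), I would observe that $S_i=\{\tau\mid\KP(\tau)<f(\tau)-i\}$ is uniformly r.e.\ because $\KP$ is upper semicomputable and $f$ is computable, and the Kraft Inequality computation already given in the first half of the proof of Theorem \ref{thm:rc} shows $\dwt_f(S_i)\le 2^{-i}$. So $\{S_i\}$ fits Definition \ref{dfn:frsfr} as a test for $f$-randomness.

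For (ii), I would simply unwind the definitions. Note that
\[
X\in\bigcap_i\llb S_i\rrb\;\liff\;\forall i\,\exists n\,(\KP(X\res n)<f(X\res n)-i)\;\liff\;X\text{ is not }f\text{-complex},
\]
and by Theorem \ref{thm:rc}, $X$ is $f$-complex iff $X$ is $f$-random. Chaining these equivalences gives $X\in\bigcap_i\llb S_i\rrb$ iff $X$ is not $f$-random; equivalently, $X$ is $f$-random iff it passes the particular test $\{S_i\}$. This is precisely the statement that $\{S_i\}$ is universal.

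There is no genuine obstacle: the only nontrivial direction, namely that failing $f$-randomness forces $X\in\bigcap_i\llb S_i\rrb$, is exactly the Kraft/Chaitin step already carried out in the second half of the proof of Theorem \ref{thm:rc}. The corollary is therefore just a restatement of that argument emphasizing that one single explicit test detects all non-$f$-random reals.
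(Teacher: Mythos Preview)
Your proposal is correct and follows essentially the same approach as the paper: both verify the weight bound $\dwt_f(S_i)\le2^{-i}$ by pointing back to the Kraft computation in the proof of Theorem~\ref{thm:rc}, and both obtain universality by observing that $X\in\bigcap_i\llb S_i\rrb$ is equivalent to non-$f$-complexity, which Theorem~\ref{thm:rc} identifies with non-$f$-randomness. The paper's version is simply terser, compressing your step (ii) into the phrase ``paraphrasing Theorem~\ref{thm:rc}.''
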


\begin{proof}
  Paraphrasing Theorem \ref{thm:rc} we see that $X$ is $f$-random if
  and only if $X\notin\bigcap_i\llb S_i\rrb$.  It remains to prove
  that $\dwt_f(S_i)\le2^{-i}$, but we have already seen this as part
  of the proof of Theorem \ref{thm:rc}.
\end{proof}

The next theorem is a straightforward generalization of
Calude/Staiger/Terwijn \cite[Corollary 4.10]{ca-st-te}.

\begin{thm}
  \label{thm:srsc}
  Let $f:\{0,1\}^*\to[-\infty,\infty]$ be computable.  Then strong
  $f$-randomness is equivalent to strong $f$-complexity.
\end{thm}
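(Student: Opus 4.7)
The plan is to mirror the proof of Theorem \ref{thm:rc}, replacing $\KP$ by $\KA$, $\dwt_f$ by $\pwt_f$, Kraft's Inequality by the fact that $\sum_{\tau\in P}2^{-\KA(\tau)}\le 1$ for any prefix-free $P$ (which follows because $M=2^{-\KA}$ is a continuous semimeasure, so iterating $M(\sigma)\ge M(\sigma 0)+M(\sigma 1)$ gives $\sum_{\tau\in P}M(\tau)\le M(\epsilon)\le 1$), and replacing the Kraft/Chaitin Lemma by the universality of $M$: every lower semicomputable continuous semimeasure $N$ satisfies $\KA(\tau)\le-\log_2 N(\tau)+c$ for some constant $c$.

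For strong $f$-randomness implies strong $f$-complexity, let $S_i=\{\tau\mid\KA(\tau)<f(\tau)-i\}$. Since $M=2^{-\KA}$ is lower semicomputable and $f$ is computable, $\{S_i\}$ is uniformly r.e. For any prefix-free $P\subseteq S_i$, each $\tau\in P$ satisfies $2^{-f(\tau)}<2^{-i}\cdot 2^{-\KA(\tau)}$, so
\[
\dwt_f(P)<2^{-i}\sum_{\tau\in P}2^{-\KA(\tau)}\le 2^{-i},
\]
and hence $\pwt_f(S_i)\le 2^{-i}$. Thus $\{S_i\}$ is a test for strong $f$-randomness, forcing $X\notin\bigcap_i\llb S_i\rrb$, i.e., $X$ is strongly $f$-complex.

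For the converse, suppose $X\in\bigcap_i\llb A_i\rrb$ with $\{A_i\}$ uniformly r.e.\ and $\pwt_f(A_i)\le 2^{-i}$. The naive analog of the Theorem \ref{thm:rc} argument fails, because $\dwt_f(A_{2i})$ may well be infinite. Instead, for each $i\ge 1$ I would define
\[
R_i(\tau)=\sup\bigl\{\dwt_f(P)\bigm|P\subseteq A_{2i}\text{ is prefix-free and }\sigma\succeq\tau\text{ for every }\sigma\in P\bigr\}.
\]
Then $R_i$ is lower semicomputable (enumerate finite prefix-free subsets of $A_{2i}$ whose elements extend $\tau$), $R_i(\epsilon)\le\pwt_f(A_{2i})\le 2^{-2i}$, and the continuous semimeasure inequality $R_i(\tau 0)+R_i(\tau 1)\le R_i(\tau)$ holds because, for prefix-free $P_0,P_1\subseteq A_{2i}$ consisting of extensions of $\tau 0$ and $\tau 1$ respectively, the union $P_0\cup P_1$ is prefix-free and consists of extensions of $\tau$. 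Setting $N=\sum_{i\ge 1}2^i R_i$, I obtain a lower semicomputable continuous semimeasure with $N(\epsilon)\le\sum_i 2^i\cdot 2^{-2i}\le 1$, so universality yields a constant $c$ with $\KA(\tau)\le-\log_2 N(\tau)+c$. For each $i$, since $X\in\llb A_{2i}\rrb$ there is some $\tau\prec X$ with $\tau\in A_{2i}$; the singleton $\{\tau\}$ is prefix-free, so $R_i(\tau)\ge 2^{-f(\tau)}$, hence $N(\tau)\ge 2^{i-f(\tau)}$ and $\KA(\tau)\le f(\tau)-i+c$. Letting $i$ vary shows $X$ is not strongly $f$-complex.

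The main obstacle is verifying that $R_i$ is a continuous semimeasure and is lower semicomputable. Encoding the prefix-free restriction into the \emph{semimeasure itself} (through the ``above $\tau$'' sup) rather than into the enumeration (as the Kraft/Chaitin approach of Theorem \ref{thm:rc} does) is precisely what allows the reverse direction to proceed despite the gap between $\pwt_f$ and $\dwt_f$.
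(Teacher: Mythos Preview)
Your proof is correct and essentially identical to the paper's: your semimeasure $R_i(\tau)$ is precisely the paper's $m_{2i}(\tau)=\pwt_f(\{\sigma\in A_{2i}\mid\sigma\supseteq\tau\})$, and your $N=\sum_i 2^iR_i$ is the paper's $\overline{m}=\sum_i 2^im_{2i}$. The forward direction, the verification of the semimeasure inequality via disjoint unions of prefix-free sets, and the appeal to universality of the a~priori semimeasure all match the paper's argument line for line.
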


\begin{proof}
  Recall that $\KA(\tau)=-\log_2m(\tau)$ where $m:\{0,1\}^*\to[0,1]$
  is a universal left-r.e.\ semimeasure.  See for instance
  \cite[\S3.16]{do-hi-book} or \cite{us-sh}.

  Suppose $X$ is strongly $f$-random.  Let
  $S_i=\{\tau\mid\KA(\tau)<f(\tau)-i\}$.  Clearly $S_i$ is uniformly
  r.e.  We claim that $\pwt_f(S_i)\le2^{-i}$.  To see this, let
  $P\subseteq S_i$ be prefix-free.  Then
  \[
  \dwt_f(P)=\sum_{\tau\in P}2^{-f(\tau)}\le\sum_{\tau\in
    P}2^{-i-\KA(\tau)}=2^{-i}\sum_{\tau\in P}m(\tau)\le2^{-i}
  \]
  since $m$ is a semimeasure.  This proves our claim.  Thus $S_i$ is a
  test for strong $f$-randomness.  Since $X$ is strongly $f$-random,
  we have $X\notin\bigcap_i\llb S_i\rrb$, i.e., $\exists i\,\forall
  n\,(\KA(X\res n)\ge f(X\res n)-i)$, i.e., $X$ is strongly
  $f$-complex.

  Now suppose $X$ is not strongly $f$-random, say $X\in\bigcap_i\llb
  A_i\rrb$ where $A_i$ is uniformly r.e.\ and $\pwt_f(A_i)\le2^{-i}$.
  For each $i$ let $m_i$ be the uniformly left-r.e.\ semimeasure given
  by $m_i(\sigma)=\pwt_f(\{\tau\in A_i\mid\tau\supseteq\sigma\})$.
  Note that $m_i(\tau)\ge\wt_f(\tau)$ whenever $\tau\in A_i$.  For
  each $i$ we have $m_i(\langle\rangle)=\pwt_f(A_i)\le2^{-i}$, hence
  $2^im_{2i}(\langle\rangle)\le2^i2^{-2i}=2^{-i}$, so consider the
  left-r.e.\ semimeasure
  $\overline{m}(\sigma)=\sum_i2^im_{2i}(\sigma)$.  Since $m$ is a
  universal left-r.e.\ semimeasure, let $c$ be such that
  $\overline{m}(\sigma)\le2^cm(\sigma)$ for all $\sigma$.  Then for
  all $\tau\in A_{2i}$ we have
  $2^{i-f(\tau)}=2^i\wt_f(\tau)\le2^im_{2i}(\tau)\le\overline{m}(\tau)
  \le2^cm(\tau)=2^{c-\KA(\tau)}$, hence $\KA(\tau)\le f(\tau)-i+c$.
  Since $X\in\bigcap_i\llb A_{2i}\rrb$ it follows that $\forall
  i\,\exists n\,(\KA(X\res n)\le f(X\res n)-i+c)$.  In other words,
  $X$ is not strongly $f$-complex.  This completes the proof.
\end{proof}

\begin{cor}
  \label{cor:srsc}
  The sets $S_i=\{\tau\mid\KA(\tau)<f(\tau)-i\}$ form a universal
  test for strong $f$-randomness.
\end{cor}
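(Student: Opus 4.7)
The plan is to extract the statement directly from the proof of Theorem~\ref{thm:srsc}, which has already done essentially all the work. A universal test for strong $f$-randomness is a uniformly r.e.\ sequence $(S_i)$ with $\pwt_f(S_i)\le 2^{-i}$ such that $X$ is strongly $f$-random if and only if $X\notin\bigcap_i\llb S_i\rrb$. So I need to verify three things about $S_i=\{\tau\mid\KA(\tau)<f(\tau)-i\}$: uniform r.e.-ness, the weight bound, and universality.

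First, the sets $S_i$ are uniformly r.e.\ because $\KA$ is upper semicomputable (equivalently, $m$ is a uniformly left-r.e.\ semimeasure) and $f$ is computable, so the condition $\KA(\tau)<f(\tau)-i$ is r.e.\ uniformly in $(\tau,i)$. This was already noted in the first paragraph of the proof of Theorem~\ref{thm:srsc}.

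Second, the prefix-free weight bound $\pwt_f(S_i)\le 2^{-i}$ was established verbatim in the proof of Theorem~\ref{thm:srsc}: for any prefix-free $P\subseteq S_i$, summing $\wt_f(\tau)\le 2^{-i}m(\tau)$ over $\tau\in P$ and using that $m$ is a semimeasure gives the bound. So each $S_i$ is a valid test for strong $f$-randomness.

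Finally, for universality, I paraphrase Theorem~\ref{thm:srsc}: by that theorem, $X$ is strongly $f$-random iff $X$ is strongly $f$-complex, i.e.\ iff $\exists c\,\forall n\,(\KA(X\res n)\ge f(X\res n)-c)$, which is precisely the statement $X\notin\bigcap_i\llb S_i\rrb$ (the negation being $\forall i\,\exists n\,(X\res n\in S_i)$, i.e.\ $X\in\bigcap_i\llb S_i\rrb$). Thus every non-strongly-$f$-random $X$ is caught by $(S_i)$, proving universality. There is no real obstacle here; the corollary is immediate once one unpacks the logical form of strong $f$-complexity.
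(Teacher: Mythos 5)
Your proof is correct and follows exactly the same route as the paper: extract the weight bound and uniform r.e.-ness from the proof of Theorem~\ref{thm:srsc}, and obtain universality by unpacking the equivalence between strong $f$-randomness and strong $f$-complexity. You have merely spelled out the steps the paper compresses into two sentences.
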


\begin{proof}
  Paraphrasing Theorem \ref{thm:srsc} we see that $X$ is strongly
  $f$-random if and only if $X\notin\bigcap_i\llb S_i\rrb$.  It
  remains to prove that $\pwt_f(S_i)\le2^{-i}$, but we have already
  seen this as part of the proof of Theorem \ref{thm:srsc}.
\end{proof}

\begin{rem}
  \label{rem:s-rand}
  As a special case, consider the functions
  $f_s:\{0,1\}^*\to[0,\infty)$ given by $f_s(\sigma)=s|\sigma|$ where
  $s$ is rational and $0<s\le1$.  Here we are writing $|\sigma|=$ the
  length of $\sigma$.  Define $X\in\{0,1\}^\NN$ to be
  \underline{$s$-random} if it is $f_s$-random, and
  \underline{strongly $s$-random} if it is strongly $f_s$-random.
  Note that Martin-L\"of randomness is equivalent to $1$-randomness
  and to strong $1$-randomness.  The \underline{effective Hausdorff
    dimension} of $X$ is
  \begin{center}
    $\effdim(X)=\sup\{s\mid X$ is $s$-random$\}=\sup\{s\mid X$ is
    strongly $s$-random$\}$
  \end{center}
  and this notion has been studied in
  \cite{miller-dim,reimann-phd,tadaki-A} and many other publications.
\end{rem}

\begin{rem}
  \label{rem:s02}
  Given a computable function $f:\{0,1\}^*\to[-\infty,\infty]$, it is
  easy to see that $\{X\mid X$ is $f$-random$\}$ and $\{X\mid X$ is
  strongly $f$-random$\}$ are $\Sigma^0_2$ subsets of $\{0,1\}^\NN$.
  Conversely, given a $\Sigma^0_2$ set $S\subseteq\{0,1\}^\NN$, we can
  easily construct a computable function $f:\{0,1\}^*\to\NN$ such that
  \begin{center}
    $S=\{X\mid X$ is $f$-random$\}=\{X\mid X$ is strongly
    $f$-random$\}$.
  \end{center}
  Namely, if $S=\bigcup_i\{$paths through $T_i\}$ where
  $T_i\subseteq\{0,1\}^*$, $i\in\NN$ is a computable sequence of
  computable trees, let
  \[
  f(\tau)=\left\{
    \begin{array}{ll}
      1 &\hbox{if }h(\tau\res(|\tau|-1))=h(\tau),\\[4pt]
      2|\tau| &\hbox{otherwise},
    \end{array}
  \right.
  \]
  where $h(\tau)=$ the least $i$ such that $i=|\tau|$ or $\tau\in
  T_i$.  We mention these examples in order to suggest how our
  concepts of $f$-randomness and strong $f$-randomness may apply to a
  wide variety of situations.  See also Theorem \ref{thm:autofr}
  below.
\end{rem}

\section{$g$-randomness implies strong $f$-randomness}
\label{sec:grsfr}

Suppose we have two computable functions
$f,g:\{0,1\}^*\to[-\infty,\infty]$.  Clearly $g$-randomness implies
$f$-randomness provided $\forall\sigma\,(f(\sigma)\le g(\sigma))$.  We
now prove that $g$-randomness implies strong $f$-randomness provided
$g$ grows significantly faster than $f$.  Our result here is a slight
refinement of known results due to Calude/Staiger/Terwijn
\cite{ca-st-te} and Reimann/Stephan \nocite{9asian}\cite{re-st-tests}.
See also Uspensky/Shen \cite[\S4.2]{us-sh}.

\begin{dfn}
  The \underline{increasing set} of $f:\{0,1\}^*\to[-\infty,\infty]$
  is
  \begin{center}
    $I(f)=\{\sigma\mid(\forall\rho\subset\sigma)\,(f(\rho)<f(\sigma))\}$.
  \end{center}
\end{dfn}

\begin{lem}
  \label{lem:int}
  Given a computable function $f:\{0,1\}^*\to[-\infty,\infty]$, we can
  effectively find a computable function
  $\overline{f}:\{0,1\}^*\to\NN$ such that for all $\sigma$,
  \begin{equation}
    \label{eq:fbar}
    f_0(\sigma)<\overline{f}(\sigma)<f_0(\sigma)+2
  \end{equation}
  where $f_0(\sigma)=\min(\max(f(\sigma),0),2|\sigma|)$.  It then
  follows that $f$-randomness is equivalent to
  $\overline{f}$-randomness, and strong $f$-randomness is equivalent
  to strong $\overline{f}$-randomness.
\end{lem}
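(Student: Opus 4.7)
My plan is to split the lemma into two parts: the computable construction of $\overline{f}$, and the invariance of (strong) $f$-randomness under passing from $f$ to $\overline{f}$.

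For the construction, recall that $f$ being computable means $g=f/(|f|+1):\{0,1\}^*\to[-1,1]$ is computable, so given $\sigma$ I can compute arbitrarily good rational approximations to $g(\sigma)$. The clipped function $f_0$ depends continuously on $g$: explicitly $f_0(\sigma)=\phi_{|\sigma|}(g(\sigma))$ where $\phi_n(t)=0$ for $t\le 0$, $\phi_n(t)=t/(1-t)$ on $(0,\,2n/(2n+1))$, and $\phi_n(t)=2n$ otherwise. This $\phi_n$ is uniformly continuous on $[-1,1]$ because the cutoff at $2n$ absorbs the singularity of $t/(1-t)$ at $t=1$, so $f_0(\sigma)$ is uniformly computable as a real. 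Approximating it within $1/4$ by a rational $q$ gives an open interval $(q+1/4,\,q+7/4)$ of length $3/2>1$, which must contain an integer; I take $\overline{f}(\sigma)$ to be the smallest such integer. The inequality (\ref{eq:fbar}) is then immediate, and $\overline{f}(\sigma)\in\NN$ because $f_0\ge 0$.

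For the invariance I pass through $f_0$ as an intermediate. The pointwise bound $f_0<\overline{f}<f_0+2$ gives $\frac{1}{4}\wt_{f_0}(\sigma)<\wt_{\overline{f}}(\sigma)<\wt_{f_0}(\sigma)$, so direct and prefix-free $\overline{f}$- and $f_0$-weights agree up to a factor of $4$; hence any $f_0$-test is automatically an $\overline{f}$-test, and any $\overline{f}$-test becomes an $f_0$-test after an index shift of $2$. This yields the equivalence of $f_0$-randomness and $\overline{f}$-randomness in both the ordinary and the strong sense. To compare $f$ with $f_0$, note that $\wt_{f_0}(\sigma)\le\wt_f(\sigma)$ whenever $f(\sigma)\ge 0$ (with equality when $f(\sigma)\le 2|\sigma|$), while $\wt_{f_0}(\sigma)=1$ whenever $f(\sigma)\le 0$. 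So any $f$-test is automatically an $f_0$-test. Conversely, any $f_0$-test $A_i$ (direct or prefix-free) with $i\ge 1$ cannot contain $\sigma$ with $f(\sigma)\le 0$, since $\{\sigma\}$ would be a prefix-free singleton of $f_0$-weight $1>2^{-i}$; on the surviving strings $\wt_f\le\wt_{f_0}$, so $A_i$ is also an $f$-test, and a further shift of $1$ handles $i=0$.

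The step I expect to require the most care is the computable construction: even though $f$ itself can equal $\pm\infty$, one must confirm that the composite $\phi_{|\sigma|}\circ g$ really is a uniformly computable real, which comes down to continuity of $\phi_n$ at $t=1$, where the singularity of $t/(1-t)$ is killed by the cutoff at $2n$. Everything else is pointwise weight comparison combined with bounded shifts of the test index, which is routine.
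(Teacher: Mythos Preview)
Your construction of $\overline{f}$ is fine and in fact more explicit than the paper's, which simply says one can effectively approximate $f_0(\sigma)$ well enough to pick an integer in the open interval $(f_0(\sigma),f_0(\sigma)+2)$.

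For the equivalence of (strong) $f$-randomness with (strong) $\overline{f}$-randomness, however, you and the paper take different routes, and yours contains an error. The paper does not compare weights at all: it invokes Theorems~\ref{thm:rc} and~\ref{thm:srsc} to replace randomness by complexity, and then uses the bounds $0<\KP(\sigma)<2|\sigma|+c$ and $0<\KA(\sigma)<2|\sigma|+c$ to argue that $f$-complexity coincides with $\overline{f}$-complexity. The upper bound $\KP(\sigma)<2|\sigma|+c$ is precisely what justifies clipping $f$ at $2|\sigma|$.

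Your weight comparison has the key inequality backwards. When $f(\sigma)\ge 0$ one has $f_0(\sigma)=\min(f(\sigma),2|\sigma|)\le f(\sigma)$, hence
\[
\wt_{f_0}(\sigma)=2^{-f_0(\sigma)}\ \ge\ 2^{-f(\sigma)}=\wt_f(\sigma),
\]
not $\le$ as you wrote. You actually use the correct direction two sentences later (``on the surviving strings $\wt_f\le\wt_{f_0}$''), so your two claims contradict each other. With the correct inequality your ``Conversely'' paragraph is fine and shows that every $f_0$-test is an $f$-test after an index shift, hence $f$-random implies $\overline{f}$-random. But your claim ``any $f$-test is automatically an $f_0$-test'' now fails: a string $\sigma$ with $f(\sigma)\gg 2|\sigma|$ (for instance $f(\sigma)=+\infty$) has $\wt_f(\sigma)$ arbitrarily small while $\wt_{f_0}(\sigma)=4^{-|\sigma|}$ stays fixed, so $\sigma$ can sit in every component of an $f$-test yet in no component of an $f_0$-test past a certain index. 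A pure weight comparison cannot handle this case; it is exactly where the bound $\KP(\sigma)<2|\sigma|+c$ enters, and why the paper passes through the complexity characterization rather than working directly with tests.
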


\begin{proof}
  Given $\sigma\in\{0,1\}^*$ we can effectively approximate
  $f_0(\sigma)$ to find $\overline{f}(\sigma)\in\NN$ such that
  (\ref{eq:fbar}) holds.  In this way we obtain a computable function
  $\overline{f}:\{0,1\}^*\to\NN$.  Using the fact that $\exists
  c\,\forall\sigma\,(0<\KP(\sigma)<2|\sigma|+c$ and
  $0<\KA(\sigma)<2|\sigma|+c)$, we can easily see that (strong)
  $f$-complexity is equivalent to (strong) $\overline{f}$-complexity.
  The desired conclusions then follow in view of Theorems \ref{thm:rc}
  and \ref{thm:srsc}.
\end{proof}

\begin{lem}
  \label{lem:If}
  Let $f:\{0,1\}^*\to\NN$ be computable.  Given an r.e.\ set
  $A\subseteq\{0,1\}^*$ we can effectively find an r.e.\ set
  $\overline{A}\subseteq I(f)$ such that $\llb
  A\rrb\subseteq\llb\overline{A}\rrb$ and
  $\dwt_f(\overline{A})\le\dwt_f(A)$ and
  $\pwt_f(\overline{A})\le\pwt_f(A)$.
\end{lem}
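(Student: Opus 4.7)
The plan is to replace each $\sigma\in A$ by its ``earliest maximizing prefix'' and verify the three required properties.

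For each $\sigma\in\{0,1\}^*$, I define $\overline{\sigma}$ to be the shortest prefix $\tau\subseteq\sigma$ achieving $\max\{f(\rho)\mid\rho\subseteq\sigma\}$. Since $f$ is computable and this maximum is taken over finitely many prefixes of $\sigma$, the map $\sigma\mapsto\overline{\sigma}$ is computable, so $\overline{A}=\{\overline{\sigma}\mid\sigma\in A\}$ is uniformly r.e.\ from an r.e.\ index of $A$. By the very choice of $\overline{\sigma}$ as the \emph{shortest} such prefix, every proper prefix $\rho\subset\overline{\sigma}$ satisfies $f(\rho)<f(\overline{\sigma})$, so $\overline{\sigma}\in I(f)$; and $f(\overline{\sigma})\ge f(\sigma)$, i.e., $\wt_f(\overline{\sigma})\le\wt_f(\sigma)$. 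Since $\overline{\sigma}\subseteq\sigma$, I also get $\llb\sigma\rrb\subseteq\llb\overline{\sigma}\rrb$, hence $\llb A\rrb\subseteq\llb\overline{A}\rrb$.

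For the $\dwt_f$ inequality, I choose, for each $\tau\in\overline{A}$, some $g(\tau)\in A$ with $\overline{g(\tau)}=\tau$. The map $g:\overline{A}\to A$ is injective, and $\wt_f(\tau)=\wt_f(\overline{g(\tau)})\le\wt_f(g(\tau))$, so
\[
\dwt_f(\overline{A})=\sum_{\tau\in\overline{A}}\wt_f(\tau)\le\sum_{\tau\in\overline{A}}\wt_f(g(\tau))\le\sum_{\sigma\in A}\wt_f(\sigma)=\dwt_f(A).
\]

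The $\pwt_f$ inequality is the step I expect to need the most care. Given a prefix-free $\overline{P}\subseteq\overline{A}$, I apply the selector $g$ above to form $P=g(\overline{P})\subseteq A$; the key observation is that $P$ is still prefix-free, because if $g(\tau_1)\subseteq g(\tau_2)$ then both $\tau_1=\overline{g(\tau_1)}\subseteq g(\tau_1)\subseteq g(\tau_2)$ and $\tau_2\subseteq g(\tau_2)$ are prefixes of $g(\tau_2)$, forcing $\tau_1,\tau_2$ to be comparable and hence equal. Using $\wt_f(\tau)\le\wt_f(g(\tau))$ again, I conclude $\dwt_f(\overline{P})\le\dwt_f(P)\le\pwt_f(A)$, and taking the supremum over all prefix-free $\overline{P}\subseteq\overline{A}$ yields $\pwt_f(\overline{A})\le\pwt_f(A)$, completing the proof.
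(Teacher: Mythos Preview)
Your proof is correct and follows essentially the same approach as the paper: the paper also sets $\overline{A}=\{\overline{\sigma}\mid\sigma\in A\}$, with $\overline{\sigma}$ defined as the shortest prefix $\rho\subseteq\sigma$ satisfying $f(\rho)\ge f(\sigma)$ (a slightly different but equally valid choice from your ``earliest maximizing prefix''), and then simply declares the verification ``straightforward.'' Your write-up supplies exactly that verification, and your injective-selector argument for the $\pwt_f$ inequality is clean and correct.
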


\begin{proof}
  Let $\overline{A}=\{\overline{\sigma}\mid\sigma\in A\}$ where
  $\overline{\sigma}=\min\{\rho\subseteq\sigma\mid f(\rho)\ge
  f(\sigma)\}$.  It is straightforward to verify that this
  $\overline{A}$ has the desired properties.
\end{proof}

\begin{rem}
  Because of Lemmas \ref{lem:int} and \ref{lem:If}, we are often safe
  in assuming that $f:\{0,1\}^*\to\NN$ and that $A\subseteq I(f)$.
\end{rem}

\begin{thm}
  \label{thm:fg}
  Let $f,g:\{0,1\}^*\to[-\infty,\infty]$ be computable with $g$ of the
  form $g(\sigma)=f(\sigma)+h(f(\sigma))$ where $h$ is nondecreasing
  and $\sum_{n=1}^\infty2^{-h(n)}<\infty$.  If $X$ is $g$-random, then
  $X$ is strongly $f$-random.
\end{thm}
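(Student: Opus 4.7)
The plan is to prove the contrapositive: suppose $X$ is not strongly $f$-random, so there is a uniformly r.e.\ sequence $A_i\subseteq\{0,1\}^*$ with $\pwt_f(A_i)\le 2^{-i}$ and $X\in\bigcap_i\llb A_i\rrb$.  From it I will manufacture a uniformly r.e.\ $g$-test $B_i$ with $\dwt_g(B_i)\le 2^{-i}$ still capturing $X$, thereby showing $X$ is not $g$-random.

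First I appeal to Lemma~\ref{lem:int} to replace $f$ by an equivalent integer-valued computable function, and then to Lemma~\ref{lem:If} to replace each $A_i$ by a uniformly r.e.\ subset of $I(f)$ whose $\llb\cdot\rrb$-image still contains $X$ and whose $\pwt_f$ is no larger.  Thus we may assume from the outset that $f:\{0,1\}^*\to\NN$ and $A_i\subseteq I(f)$.

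The key move is a level decomposition.  For each $n\in\NN$ set $A_i^{(n)}=\{\sigma\in A_i\mid f(\sigma)=n\}$.  Since $A_i\subseteq I(f)$, any two comparable elements of $A_i$ have strictly different $f$-values, so each $A_i^{(n)}$ is automatically prefix-free; hence $\dwt_f(A_i^{(n)})\le\pwt_f(A_i)\le 2^{-i}$.  Using the identity $2^{-g(\sigma)}=2^{-h(f(\sigma))}\,\wt_f(\sigma)$ and summing by levels,
\[
\dwt_g(A_i)\;=\;\sum_{n=1}^\infty 2^{-h(n)}\,\dwt_f(A_i^{(n)})\;\le\;2^{-i}\sum_{n=1}^\infty 2^{-h(n)}\;=\;S\cdot 2^{-i},
\]
where $S<\infty$ by hypothesis.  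Picking $k\in\NN$ with $S\le 2^k$ and putting $B_i=A_{i+k}$ yields a uniformly r.e.\ $g$-test bounded by $2^{-i}$ that still captures $X$, contradicting $g$-randomness.

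The step I expect to require the most care is the reduction to $A_i\subseteq I(f)$ with $f$ integer-valued: this is what makes the level sets prefix-free and thereby converts the single $\pwt_f$ bound into a uniform per-level $\dwt_f$ bound.  Without it, the key summation $\sum_n 2^{-h(n)}\dwt_f(A_i^{(n)})$ cannot be controlled and the hypothesis $\sum_n 2^{-h(n)}<\infty$ has nowhere to land.  Once this reduction is in hand, the remainder of the argument is a one-line Fubini-style rearrangement and a uniform index shift.
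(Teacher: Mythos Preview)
Your proof is correct and follows essentially the same route as the paper: reduce to integer-valued $f$ via Lemma~\ref{lem:int}, push each $A_i$ into $I(f)$ via Lemma~\ref{lem:If}, slice $A_i$ into the prefix-free level sets $\{\sigma\in A_i\mid f(\sigma)=n\}$, and sum $2^{-h(n)}\dwt_f$ over the levels to bound $\dwt_g(A_i)$ by $2^{c-i}$. The only small point the paper makes explicit that you leave implicit is that the monotonicity of $h$ is what guarantees that replacing $f$ by its integer-valued surrogate $\overline{f}$ does not disturb $g$-randomness (since then $\overline{g}=\overline{f}+h(\overline{f})$ differs from $g$ by a bounded amount on the relevant range).
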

 
\begin{proof}
  Because $h$ is nondecreasing, we may safely apply Lemma
  \ref{lem:int} to assume that $f:\{0,1\}^*\to\NN$.  Fix $c$ such that
  $\sum_n2^{-h(n)}\le2^c<\infty$.  Suppose $X$ is not strongly
  $f$-random, say $X\in\bigcap_i\llb A_i\rrb$ where $A_i$ is uniformly
  r.e.\ and $\pwt_f(A_i)\le2^{-i}$.  By Lemma \ref{lem:If} we may
  safely assume that $A_i\subseteq I(f)$ for all $i$.  Let
  $P_{in}=\{\sigma\in A_i\mid f(\sigma)=n\}$.  Clearly
  $A_i=\bigcup_nP_{in}$ and $P_{in}$ is prefix-free.  Thus
  $\dwt_f(P_{in})\le\pwt_f(A_i)$ and
  \[
  \begin{array}{rcl}
    \dwt_g(A_i) &=& \sum_{\sigma\in A_i}2^{-g(\sigma)}\\[6pt]
    &=& \sum_{\sigma\in A_i}2^{-h(f(\sigma))}2^{-f(\sigma)}\\[6pt]
    &=& \sum_n\sum_{\sigma\in P_{in}}2^{-h(n)}2^{-f(\sigma)}\\[6pt]
    &=& \sum_n2^{-h(n)}\sum_{\sigma\in P_{in}}2^{-f(\sigma)}\\[6pt]
    &=& \sum_n2^{-h(n)}\dwt_f(P_{in})\\[6pt]
    &\le& 2^c\pwt_f(A_i)\\[6pt]
    &\le& 2^{c-i}.
  \end{array}
  \]
  Since $X\in\bigcap_i\llb A_i\rrb$ it follows that $X$ is not
  $g$-random, Q.E.D.
\end{proof}

\begin{thm}
  \label{thm:log2}
  Let $f:\{0,1\}^*\to(0,\infty]$ be computable.  Suppose $X$ is\\
  $(f+(1+\epsilon)\log_2f)$-random for some $\epsilon>0$.  Then $X$ is
  strongly $f$-random.
\end{thm}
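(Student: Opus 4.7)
The plan is to derive this theorem as a direct specialization of Theorem \ref{thm:fg}, taking $h(y):=(1+\epsilon)\log_2 y$. With this choice, the function $g$ defined by $g(\sigma):=f(\sigma)+h(f(\sigma))$ is precisely $f(\sigma)+(1+\epsilon)\log_2 f(\sigma)$, so the hypothesis that $X$ is $g$-random is exactly the hypothesis of the theorem, and the conclusion of Theorem \ref{thm:fg} that $X$ is strongly $f$-random is exactly what we want.

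It remains only to check that this $h$ satisfies the two hypotheses of Theorem \ref{thm:fg}. First, $h$ is nondecreasing on $(0,\infty)$: indeed $1+\epsilon>0$ and $\log_2$ is strictly increasing. Second, one computes
\[
\sum_{n=1}^\infty 2^{-h(n)}\;=\;\sum_{n=1}^\infty n^{-(1+\epsilon)},
\]
which converges by the $p$-series test precisely because $1+\epsilon>1$. This is the one place where the strict positivity of $\epsilon$ is used; with $\epsilon=0$ the sum would diverge.

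The only point that might cause unease is that $f$ need not be integer-valued, and in principle could take small positive values at which $\log_2 f$ is very negative. However, this is already absorbed into Theorem \ref{thm:fg}, whose proof begins by invoking Lemma \ref{lem:int} to replace $f$ by an integer-valued $\overline{f}:\{0,1\}^*\to\NN$; since $f>0$ we get $f_0>0$ and hence $\overline{f}\ge 1$, so $\log_2\overline{f}$ is well defined and nonnegative. The hypothesis that $h$ is nondecreasing ensures that this normalization is compatible with the relation $g=f+h\circ f$ up to a bounded error, which is harmless for randomness. Consequently I do not expect a real obstacle; the content of the theorem is entirely packaged into Theorem \ref{thm:fg}, and the proof amounts to recognizing that $(1+\epsilon)\log_2$ is an admissible choice of $h$.
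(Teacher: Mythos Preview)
Your proposal is correct and follows exactly the paper's approach: apply Theorem~\ref{thm:fg} with $h(x)=(1+\epsilon)\log_2 x$, noting that $h$ is nondecreasing and that $\sum_{n\ge1}2^{-h(n)}=\sum_{n\ge1}n^{-(1+\epsilon)}<\infty$. The paper's proof adds only the one-line remark that one may safely assume $\epsilon$ is rational (so that $g=f+(1+\epsilon)\log_2 f$ is visibly computable, as required by Theorem~\ref{thm:fg}); you do not mention this, but it is a harmless technicality, and your extra paragraph about the normalization via Lemma~\ref{lem:int} is accurate though already absorbed into the proof of Theorem~\ref{thm:fg}.
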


\begin{proof}
  We may safely assume that $\epsilon$ is rational.  In this case it
  suffices to apply Theorem \ref{thm:fg} with
  $h(x)=(1+\epsilon)\log_2x$.
\end{proof}

\begin{rem}
  \label{rem:hudelson-complex}
  Consider the computable function $f=f_s$ where $s=1/2$, i.e.,
  $f(\sigma)=|\sigma|/2$ for all $\sigma$.  (More generally, let $f$
  be computable and satisfy certain other conditions which we shall
  not specify here.)  Reimann/Stephan \cite{re-st-tests} have
  constructed an $X$ which is $f$-random but not strongly $f$-random.
  Hudelson \cite{hudelson-complex} has constructed an $X$ which is strongly
  $f$-random but such that no $Y$ Turing reducible to $X$ is
  $(f+(1+\epsilon)\log_2f)$-random for any $\epsilon>0$.  We
  conjecture that there exists an $X$ which is $f$-random but such
  that no $Y$ Turing reducible to $X$ is strongly $f$-random.
\end{rem}

\begin{rem}
  \label{rem:log2}
  In Theorem \ref{thm:log2} and Remark \ref{rem:hudelson-complex} we
  may replace $f+(1+\epsilon)\log_2f$ by
  $f+\log_2f+(1+\epsilon)\log_2\log_2f$, etc., as in
  \cite[\S4.2]{us-sh}.
\end{rem}

\section{Propagation of strong $f$-randomness}
\label{sec:sfr}

The purpose of this section is to prove generalizations of Theorems
\ref{thm:mlrXYZ}--\ref{thm:mlrPA} in which Martin-L\"of randomness is
replaced by strong $f$-randomness.  These generalizations are perhaps
the most important new results of this paper.  Let $\mu$ be the
\underline{fair-coin probability measure} on $\{0,1\}^\NN$ given by
$\mu(\llb\sigma\rrb)=2^{-|\sigma|}$.

\begin{dfn}
  \label{dfn:levin}
  A \underline{Levin system} is an indexed family of sets
  $V_\sigma\subseteq\{0,1\}^\NN$, $\sigma\in\{0,1\}^*$, such that
  \begin{enumerate}
  \item $V_\sigma$ is $\Sigma^0_1$ uniformly in $\sigma$,
  \item $V_\sigma\supseteq V_{\sigma\cat\langle0\rangle}\cup
    V_{\sigma\cat\langle1\rangle}$ for all $\sigma$,
  \item $V_{\sigma\cat\langle0\rangle}\cap
    V_{\sigma\cat\langle1\rangle}=\emptyset$ for all $\sigma$.
  \end{enumerate}
  These properties easily imply
  \begin{enumerate}\setcounter{enumi}{3}
  \item $V_\rho\supseteq V_\sigma$ whenever $\rho\subseteq\sigma$,
  \item $V_\sigma\cap V_\tau=\emptyset$ whenever $\sigma$ and $\tau$
    are incompatible.
  \end{enumerate}
\end{dfn}

\begin{lem}
  \label{lem:sfr}
  Let $V_\sigma$ be a Levin system, and let $f$ be computable.  If $X$
  is strongly $f$-random, then $\exists c\,\forall n\,(\mu(V_{X\res
    n})\le2^{c-f(X\res n)})$.
\end{lem}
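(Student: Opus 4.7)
The plan is to argue the contrapositive: assume $X$ fails the conclusion and construct an explicit test for strong $f$-randomness that $X$ fails. Suppose
\[
\forall c\,\exists n\,(\mu(V_{X\res n})>2^{c-f(X\res n)}).
\]
For each $i\in\NN$ set
\[
A_i=\{\sigma\in\{0,1\}^*\mid\mu(V_\sigma)>2^{i-f(\sigma)}\}.
\]
I would verify three things: (i) the $A_i$ are uniformly r.e.; (ii) $\pwt_f(A_i)\le 2^{-i}$ for all $i$; and (iii) $X\in\bigcap_i\llb A_i\rrb$. Together with Definition \ref{dfn:frsfr}, these immediately contradict strong $f$-randomness of $X$.

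For (i), $\mu(V_\sigma)$ is lower semicomputable uniformly in $\sigma$ because $V_\sigma$ is $\Sigma^0_1$ uniformly in $\sigma$, while $2^{i-f(\sigma)}$ is computable from $i$ and $\sigma$ (interpreting the endpoint cases $f(\sigma)=\pm\infty$ in the obvious way; alternatively one may first invoke Lemma \ref{lem:int} to reduce to $f:\{0,1\}^*\to\NN$). Hence membership in $A_i$ is a $\Sigma^0_1$ condition uniform in $i,\sigma$. For (iii), given $i$, apply the failure hypothesis with $c=i$ to get some $n$ with $X\res n\in A_i$, so $X\in\llb A_i\rrb$.

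The one substantive step is (ii), and it is precisely where the Levin-system structure does the work. Let $P\subseteq A_i$ be prefix-free. By property 5 of Definition \ref{dfn:levin}, the sets $\{V_\sigma\mid\sigma\in P\}$ are pairwise disjoint subsets of $\{0,1\}^\NN$, so $\sum_{\sigma\in P}\mu(V_\sigma)\le 1$. Combining this with the defining inequality of $A_i$, which rearranges to $2^{-f(\sigma)}<2^{-i}\mu(V_\sigma)$, gives
\[
\dwt_f(P)=\sum_{\sigma\in P}2^{-f(\sigma)}\le 2^{-i}\sum_{\sigma\in P}\mu(V_\sigma)\le 2^{-i}.
\]
Taking the supremum over prefix-free $P\subseteq A_i$ yields $\pwt_f(A_i)\le 2^{-i}$, as required.

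The only real obstacle is recognizing that the disjointness in a Levin system (as opposed to mere nesting) is exactly what converts the pointwise measure bound $\mu(V_\sigma)>2^{i-f(\sigma)}$ into a prefix-free $f$-weight bound; without disjointness one could only sum $\mu(V_\sigma)$ over antichains of the tree $2^{<\NN}$, and the argument would collapse. Once that is identified, the remainder is routine bookkeeping of semicomputability and inequalities.
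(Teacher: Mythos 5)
Your proof is correct and takes essentially the same approach as the paper: you construct the identical test $A_i=\{\sigma\mid\mu(V_\sigma)>2^{i-f(\sigma)}\}$ and derive $\pwt_f(A_i)\le2^{-i}$ from the pairwise disjointness (property 5) of the Levin system in exactly the same way. Framing it as a contrapositive rather than applying the definition of strong $f$-randomness directly is a purely cosmetic difference.
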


\begin{proof}
  Let $A_i=\{\sigma\mid\mu(V_\sigma)>2^{i-f(\sigma)}\}$.  Clearly
  $A_i$ is uniformly r.e.  We claim that $\pwt_f(A_i)\le2^{-i}$.  To
  see this, let $P\subseteq A_i$ be prefix-free.  By part 5 of
  Definition \ref{dfn:levin} we have $1\ge\mu(\bigcup_{\sigma\in
    P}V_\sigma)=\sum_{\sigma\in P}\mu(V_\sigma)\ge\sum_{\sigma\in
    P}2^{i-f(\sigma)}=2^i\dwt_f(P)$, so $\dwt_f(P)\le2^{-i}$.  This
  proves our claim.  Thus $A_i$ is a test for strong $f$-randomness.
  Since $X$ is strongly $f$-random, it follows that $X\notin\llb
  A_i\rrb$ for some $i$.  In other words, $\mu(V_{X\res
    n})\le2^{i-f(X\res n)}$ for all $n$, Q.E.D.
\end{proof}

\begin{rem}
  Our idea of using strong $f$-randomness in Lemma \ref{lem:sfr} was
  inspired by Reimann's use of strong $f$-randomness in \cite[Theorem
  14]{reimann-eff}.
\end{rem}

\begin{lem}
  \label{lem:tilde}
  Let $r_\sigma$, $\sigma\in\{0,1\}^*$, be a uniformly left-r.e.\
  system of real numbers.  Given a Levin system $V_\sigma$, we can
  effectively find a Levin system $\widetilde{V}_\sigma$ such that
  \begin{enumerate}
  \item $\widetilde{V}_\sigma\subseteq V_\sigma$ for all $\sigma$,
  \item $\mu(\widetilde{V}_\sigma)\le r_\sigma$ for all $\sigma$,
  \item $\widetilde{V}_\sigma=V_\sigma$ whenever $\sigma$ is such that
    $\mu(V_\rho)<r_\rho$ for all $\rho\subseteq\sigma$.
  \end{enumerate}
\end{lem}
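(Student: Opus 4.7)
The plan is to build $\widetilde V_\sigma$ by a stage-wise enumeration that tries to copy $V_\sigma$ into $\widetilde V_\sigma$ while never letting the measure overshoot $r_\rho$ at any $\rho\subseteq\sigma$. Fix computable clopen approximations $V_{\sigma,s}$ increasing to $V_\sigma$ that respect the Levin system structure at each stage, together with rational approximations $r_{\sigma,s}$ nondecreasing with supremum $r_\sigma$. Set $\widetilde V_{\sigma,0}=\emptyset$. At stage $s+1$, process in some computable order all pairs $(\sigma,\tau)$ with $|\sigma|\le s+1$, $\llb\tau\rrb\subseteq V_{\sigma,s+1}$ and $\llb\tau\rrb\not\subseteq\widetilde V_{\sigma,s}$. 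For each such pair, test whether
\[
\mu(\widetilde V_\rho\cup\llb\tau\rrb)\le r_{\rho,s+1}
\]
holds simultaneously for every $\rho\subseteq\sigma$, using the version of $\widetilde V_\rho$ already updated during this stage; if so, add $\llb\tau\rrb$ to $\widetilde V_\rho$ for every such $\rho$, and otherwise leave them unchanged and revisit the pair at a later stage.

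Conditions 1 and 2 of the lemma and all three Levin system axioms for $\widetilde V_\sigma$ are immediate from the construction: the family is uniformly $\Sigma^0_1$; $\widetilde V_\sigma\subseteq V_\sigma$ because every adjoined cylinder already lies in $V_\sigma$; the inclusion $\widetilde V_\sigma\supseteq\widetilde V_{\sigma\cat\langle0\rangle}\cup\widetilde V_{\sigma\cat\langle1\rangle}$ holds because additions are always made to all ancestors simultaneously; disjointness of $\widetilde V_{\sigma\cat\langle0\rangle}$ and $\widetilde V_{\sigma\cat\langle1\rangle}$ is inherited from that of $V_{\sigma\cat\langle0\rangle}$ and $V_{\sigma\cat\langle1\rangle}$; and $\mu(\widetilde V_\sigma)\le r_\sigma$ because the guard enforces $\mu(\widetilde V_{\sigma,s})\le r_{\sigma,s}$ at every stage.

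The only condition requiring a real argument is condition 3. Suppose $\mu(V_\rho)<r_\rho$ for every $\rho\subseteq\sigma$, and let $\llb\tau\rrb\subseteq V_\sigma$ be any basic cylinder. Since $\widetilde V_{\rho,s}\cup\llb\tau\rrb\subseteq V_\rho$ at every stage, we have $\mu(\widetilde V_{\rho,s}\cup\llb\tau\rrb)\le\mu(V_\rho)<r_\rho$. Because there are only finitely many $\rho\subseteq\sigma$ and $r_{\rho,s}$ converges up to $r_\rho$ for each of them, for all sufficiently large $s$ the inequality $r_{\rho,s}>\mu(V_\rho)$ holds for every such $\rho$ simultaneously, so at such a stage the guard for $\llb\tau\rrb$ succeeds and $\llb\tau\rrb$ enters $\widetilde V_\sigma$. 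The main obstacle is precisely the risk that the approximations $r_{\rho,s}$ lag behind $r_\rho$ and block a legitimate addition forever; the strict inequality $\mu(V_\rho)<r_\rho$ combined with the monotone convergence of $r_{\rho,s}$ makes the block only temporary, and since every pending pair $(\sigma,\tau)$ is revisited at every later stage, the addition must eventually go through.
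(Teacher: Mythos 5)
Your construction is correct and is essentially the standard ``awkward but straightforward'' argument the paper has in mind: grow $\widetilde V_\sigma$ greedily by cylinders from $V_\sigma$, guarding every ancestor's measure budget against the current left-r.e.\ approximation to $r_\rho$, and observe that when $\mu(V_\rho)<r_\rho$ holds for every $\rho\subseteq\sigma$ the guards are only transiently blocked. One small implementation detail: as written, the set of pairs $(\sigma,\tau)$ considered at stage $s+1$ is infinite (any extension of an admissible $\tau$ is again admissible), so you should restrict to $\tau$ with $|\tau|\le s+1$, or to $\tau$ minimal with $\llb\tau\rrb\subseteq V_{\sigma,s+1}$, to keep each stage finite; this changes nothing in the verification since every cylinder of $V_\sigma$ is still eventually offered.
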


\begin{proof}
  The proof is awkward but straightforward.
\end{proof}

\begin{thm}
  \label{thm:sfrXYZ}
  Let $f:\{0,1\}^*\to[-\infty,\infty]$ be computable.  Suppose $X$ is
  strongly $f$-random and Turing reducible to $Y$ where $Y$ is
  Martin-L\"of random relative to $Z$.  Then $X$ is strongly
  $f$-random relative to $Z$.
\end{thm}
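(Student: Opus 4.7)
The plan is to argue by contrapositive: from a purported witness that $X$ fails to be strongly $f$-random relative to $Z$, I will construct a Martin-L\"of test relative to $Z$ that captures $Y$, contradicting the hypothesis on $Y$. The bridge between the strong $f$-randomness of $X$ and the Martin-L\"of randomness of $Y$ is a Levin system built from a Turing functional reducing $X$ to $Y$, suitably trimmed via Lemma \ref{lem:tilde}.

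First I would fix a Turing functional $\Phi$ with $X=\Phi^Y$ and set $V_\sigma=\{Y'\in\{0,1\}^\NN\mid\sigma\subset\Phi^{Y'}\}$, which is easily seen to be a Levin system in the sense of Definition \ref{dfn:levin}. Applying Lemma \ref{lem:sfr} to the strong $f$-randomness of $X$ supplies a constant $c$ with $\mu(V_{X\res n})\le2^{c-f(X\res n)}$ for all $n$; bumping $c$ to $c+1$ makes the inequality strict along the path of $X$.

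Next, suppose $A_i^Z$ is a uniformly $Z$-r.e.\ sequence witnessing that $X$ is not strongly $f$-random relative to $Z$, i.e., $\pwt_f(A_i^Z)\le2^{-i}$ and $X\in\bigcap_i\llb A_i^Z\rrb$. I would apply Lemma \ref{lem:tilde} with $r_\sigma=2^{c+1-f(\sigma)}$ to obtain a trimmed Levin system $\widetilde{V}_\sigma$ with $\widetilde{V}_\sigma\subseteq V_\sigma$ and $\mu(\widetilde{V}_\sigma)\le2^{c+1-f(\sigma)}$ for every $\sigma$. Part 3 of that lemma, applied via the strict inequality from the previous paragraph, yields $\widetilde{V}_{X\res n}=V_{X\res n}$ for all $n$, so $Y\in\widetilde{V}_{X\res n}$ for every $n$.

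Finally, the test is $B_i^Z=\bigcup_{\sigma\in A_i^Z}\widetilde{V}_\sigma$, which is uniformly $\Sigma^0_1$ relative to $Z$. For each $i$ some initial segment of $X$ lies in $A_i^Z$, so $Y\in B_i^Z$. Replacing $A_i^Z$ by its minimal prefix-free part $\widehat{A_i^Z}$ and using disjointness of the $\widetilde{V}_\sigma$ over incompatible $\sigma$, a brief computation gives $\mu(B_i^Z)\le2^{c+1}\pwt_f(A_i^Z)\le2^{c+1-i}$; re-indexing $i\mapsto i+c+1$ then produces a $Z$-Martin-L\"of test that captures $Y$, the desired contradiction. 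The delicate step is that Lemma \ref{lem:sfr} only bounds $\mu(V_\sigma)$ along the path of $X$, whereas the sum over $\widehat{A_i^Z}$ ranges largely over $\sigma$'s off that path; Lemma \ref{lem:tilde} is precisely the device that converts pointwise control on the path into a uniform measure bound everywhere.
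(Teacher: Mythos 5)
Your proposal is correct and follows essentially the same route as the paper: build the Levin system $V_\sigma=\{\overline Y\mid\Phi^{\overline Y}\supseteq\sigma\}$, apply Lemma \ref{lem:sfr} and Lemma \ref{lem:tilde} to obtain the trimmed system $\widetilde V_\sigma$, and then push a test for strong $f$-randomness of $X$ relative to $Z$ through $\widetilde V_\sigma$ to a $Z$-Martin-L\"of test capturing $Y$, using minimality/prefix-freeness and disjointness to bound the measure. Your explicit bump from $c$ to $c+1$ to secure the strict inequality needed for part 3 of Lemma \ref{lem:tilde} is a small point of extra care that the paper leaves implicit.
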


\begin{proof}
  Let $\Phi$ be a partial recursive functional such that $X=\Phi^Y$.
  Consider the Levin system
  $V_\sigma=\{\Ybar\mid\Phi^\Ybar\supseteq\sigma\}$.  By Lemma
  \ref{lem:sfr} let $c$ be such that $\mu(V_{X\res n})<2^{c-f(X\res
    n)}$ for all $n$.  Applying Lemma \ref{lem:tilde} with
  $r_\sigma=2^{c-f(\sigma)}$ we obtain a Levin system
  $\widetilde{V}_\sigma$ such that
  $\mu(\widetilde{V}_\sigma)\le2^{c-f(\sigma)}$ for all $\sigma$, and
  $Y\in V_{X\res n}=\widetilde{V}_{X\res n}$ for all $n$.  Now suppose
  $X$ is not strongly $f$-random relative to $Z$, say
  $X\in\bigcap_i\llb A_i^Z\rrb$ where $A_i^Z$ is uniformly $Z$-r.e.\
  and $\pwt_f(A_i^Z)\le2^{-i}$.  Let $W_i^Z =\bigcup_{\sigma\in
    A_i^Z}\widetilde{V}_\sigma$.  Clearly $W_i^Z$ is uniformly
  $\Sigma^{0,Z}_1$.  Because $X\in\bigcap_i\llb A_i^Z\rrb$ and
  $Y\in\bigcap_nV_{X\res n}=\bigcap_n\widetilde{V}_{X\res n}$, we have
  $Y\in\bigcap_iW_i^Z$.  Let $P_i=\widehat{A}_i^Z=\{$minimal elements
  of $A_i^Z\}$.  Because $\widetilde{V}_\sigma$ is a Levin system, we
  have $W_i^Z=\bigcup_{\sigma\in P_i}\widetilde{V}_\sigma$ and hence
  \[
  \mu(W_i^Z)=\sum_{\sigma\in
    P_i}\mu(\widetilde{V}_\sigma)\le\sum_{\sigma\in
    P_i}2^{c-f(\sigma)}=2^c\dwt_f(P_i)\le2^c\pwt_f(A_i^Z)\le2^{c-i}
  \]
  since $P_i$ is a prefix-free subset of $A_i^Z$.  Thus $Y$ is not
  Martin-L\"of random relative to $Z$, Q.E.D.
\end{proof}

\begin{rem}
  In Theorem \ref{thm:sfrXYZ} the assumption ``$Y$ is Martin-L\"of
  random relative to $Z$'' cannot be weakened to ``$Y$ is strongly
  $f$-random relative to $Z$.''  For example, define $Z(n)=Y(2n)$
  where $Y$ is Martin-L\"of random.  Then $Z$ is strongly $1/2$-random
  (indeed Martin-L\"of random) and Turing reducible to $Y$, and $Y$ is
  strongly $1/2$-random relative to $Z$, but of course $Z$ is not
  strongly $1/2$-random relative to $Z$.
\end{rem}

\begin{thm}
  \label{thm:sfrPA}
  For each $i\in\NN$ let $f_i:\{0,1\}^*\to[-\infty,\infty]$ be
  computable and let $X_i\in\{0,1\}^\NN$.  Suppose $\forall i\,(X_i$
  is strongly $f_i$-random$)$.  Then, we can find $Z$ of $\PA$-degree
  such that $\forall i\,(X_i$ is strongly $f_i$-random relative to
  $Z)$.
\end{thm}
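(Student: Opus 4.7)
The plan is to first prove the following strengthening, from which the theorem follows by taking $\mathcal{Q}$ to be the $\Pi^0_1$ class of completions of $\PA$: for every nonempty $\Pi^0_1$ class $\mathcal{Q} \subseteq \{0,1\}^\NN$, there exists $Z \in \mathcal{Q}$ such that $X_i$ is strongly $f_i$-random relative to $Z$ for every $i$. Every member of the $\PA$ class is by definition of $\PA$-degree, so this will yield the theorem. The situation parallels the passage from Theorem \ref{thm:mlrQ} to Theorem \ref{thm:mlrPA}.

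For the strengthening I would use the relativized form of Corollary \ref{cor:srsc}: strong $f_i$-randomness of $X_i$ relative to $Z$ is equivalent to $X_i \notin \bigcap_j \llb S_j^{i,Z}\rrb$, where $S_j^{i,Z} = \{\tau \mid \KA^Z(\tau) < f_i(\tau) - j\}$ is uniformly $Z$-r.e.\ and satisfies $\pwt_{f_i}(S_j^{i,Z}) \le 2^{-j}$. Argue by contradiction: if no $Z \in \mathcal{Q}$ satisfies the conclusion, then for every $Z \in \mathcal{Q}$ some $i = i(Z)$ has $X_i \in \bigcap_j \llb S_j^{i,Z}\rrb$.

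The main obstacle, and the heart of the argument, is uniformizing this $Z$-dependent failure into a single non-relativized test that captures some specific $X_i$, so as to contradict its strong $f_i$-randomness. Writing $\mathcal{Q}$ as $[T]$ for a computable tree $T$ and following the compactness pattern used for Theorem \ref{thm:mlrPA} (see \cite{re-sl-measures,nswwkl,do-hi-mi-ni}), I would build, for each $i$ and $j$, an r.e.\ set $A_j^i$ by enumerating every $\tau$ that enters $S_{j+c}^{i,\rho}$ uniformly over all $\rho \in T$ of a sufficient level. The uniform bound $\pwt_{f_i}(S_{j+c}^{i,\rho}) \le 2^{-(j+c)}$, combined with a compactness argument on $T$, should preserve $\pwt_{f_i}(A_j^i) \le 2^{-j}$ up to an absolute additive constant $c$. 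Absorbing countably many $i$'s by $2^{-i}$-weighting then yields a single unrelativized test whose existence contradicts the strong $f_i$-randomness of some $X_i$. Maintaining this $\pwt_{f_i}$ bound through the uniformization is the delicate point: it replaces the easier $\mu$-additivity arguments that suffice for Lebesgue measure in the proof of Theorem \ref{thm:mlrPA} with a prefix-free antichain bookkeeping suited to the non-probability $f_i$-weight, using that $\pwt_{f_i}$ is defined as a supremum over prefix-free subsets (Definition \ref{dfn:dwtpwt}).
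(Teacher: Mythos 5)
Your proposal takes a genuinely different route from the paper's, and as sketched it has a gap at exactly the point you flag as ``delicate.'' The paper's own proof of Theorem \ref{thm:sfrPA} is short and avoids compactness entirely: by the Ku\v{c}era/G\'acs Theorem there is a single Martin-L\"of random $Y$ with $X_i\leT Y$ for all $i$; by Theorem \ref{thm:mlrPA} there is a $\PA$-degree $Z$ with $Y$ Martin-L\"of random relative to $Z$; and then Theorem \ref{thm:sfrXYZ} gives $X_i$ strongly $f_i$-random relative to $Z$ for every $i$. (In fact your claimed strengthening to an arbitrary nonempty $\Pi^0_1$ class $Q$ is true and follows from the same argument, replacing Theorem \ref{thm:mlrPA} by Theorem \ref{thm:mlrQ}; the paper does this for $\DNR$ in Theorem \ref{thm:dnrQ}.)

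The gap in your direct compactness argument is that, unlike $\mu$, the quantity $\pwt_f$ is \emph{not} a function of the open set $\llb A\rrb$; it depends on the presentation $A$ as a set of strings, and it is not monotone under inclusion of generated open sets. So from $\llb\widetilde{S}_j\rrb\subseteq\llb S_j^{i,Z}\rrb$ and $\pwt_{f_i}(S_j^{i,Z})\le 2^{-j}$ you cannot conclude any bound on $\pwt_{f_i}(\widetilde{S}_j)$, and the ``prefix-free antichain bookkeeping'' you allude to is precisely what fails in general. The paper's version of your argument is Theorem \ref{thm:sfrQ} in \S\ref{sec:vfrPA}, and it works only by replacing $\pwt_f$ with the vehement weight $\vwt_f$, which \emph{is} monotone (Remark \ref{rem:vwt}), and then invoking the equivalence of vehement and strong $f$-randomness, Theorem \ref{thm:veh-str-2} --- which requires the additional hypothesis that $f$ be convex and rests on the nontrivial good-cover machinery of Lemmas \ref{lem:B'}--\ref{lem:AB3}. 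Since Theorem \ref{thm:sfrPA} makes no convexity assumption, your route as written does not prove it; you would either need to add the vehement/convex apparatus (and then only cover the convex case) or switch to the Ku\v{c}era/G\'acs + XYZ argument.
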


\begin{proof}
  By the Ku\v{c}era/G\'acs Theorem (see \cite[Theorem
  8.3.2]{do-hi-book} or \cite[\S3.3]{nies-book} or \cite[Theorem
  3.8]{aedsh}), let $Y$ be Martin-L\"of random such that $\forall
  i\,(X_i$ is Turing reducible to $Y)$.  By Theorem \ref{thm:mlrPA}
  let $Z$ be of $\PA$-degree such that $Y$ is Martin-L\"of random
  relative to $Z$.  If $\forall i\,(X_i$ is strongly $f_i$-random$)$,
  it follows by Theorem \ref{thm:sfrXYZ} that $\forall i\,(X_i$ is
  strongly $f_i$-random relative to $Z)$.
\end{proof}

\begin{cor}
  \label{cor:sfrPA}
  Let $f:\{0,1\}^*\to[-\infty,\infty]$ be computable.  If $X$ is
  strongly $f$-random, then $X$ is strongly $f$-random relative to
  some $\PA$-degree.
\end{cor}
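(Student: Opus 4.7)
The plan is to derive this as an immediate specialization of Theorem~\ref{thm:sfrPA}. That theorem allows an arbitrary countable sequence of pairs $(f_i, X_i)$ with each $X_i$ strongly $f_i$-random, and produces a single oracle $Z$ of $\PA$-degree relativizing all the strong $f_i$-randomness assertions simultaneously. So the corollary is the special case of a single pair.

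Concretely, I would set $f_i = f$ and $X_i = X$ for every $i \in \NN$. The hypothesis that $X$ is strongly $f$-random then gives $\forall i\,(X_i \text{ is strongly } f_i\text{-random})$, so Theorem~\ref{thm:sfrPA} produces $Z$ of $\PA$-degree such that $\forall i\,(X_i \text{ is strongly } f_i\text{-random relative to } Z)$. In particular, $X$ is strongly $f$-random relative to $Z$, which is the desired conclusion.

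There is essentially no obstacle here, since all the real work has already been done in Theorem~\ref{thm:sfrPA}, whose proof combines the Ku\v{c}era/G\'acs embedding theorem (to get a Martin-L\"of random $Y$ computing $X$), Theorem~\ref{thm:mlrPA} (to get a $\PA$-degree $Z$ relative to which $Y$ remains Martin-L\"of random), and Theorem~\ref{thm:sfrXYZ} (the XYZ-style propagation result for strong $f$-randomness) to transfer strong $f$-randomness of $X$ to its $Z$-relativization. One could equally well cite Theorem~\ref{thm:sfrPA} with the trivial index set $\{0\}$ rather than duplicating the pair, but the effect is the same. The proof is therefore a one-line appeal to Theorem~\ref{thm:sfrPA}.
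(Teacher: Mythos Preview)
Your proposal is correct and matches the paper's proof exactly: the paper likewise applies Theorem~\ref{thm:sfrPA} with $X_i = X$ and $f_i = f$ for all $i$.
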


\begin{proof}
  Apply Theorem \ref{thm:sfrPA} with $X_i=X$ and $f_i=f$ for all $i$.
\end{proof}

Even the following corollary appears to be new.

\begin{cor}
  \label{cor:mlrPA}
  Suppose $(\forall i\in\NN)\,(X_i$ is Martin-L\"of random$)$.  Then,
  we can find $Z$ of $\PA$-degree such that $(\forall i\in\NN)\,(X_i$
  is Martin-L\"of random relative to $Z)$.
\end{cor}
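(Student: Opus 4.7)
The plan is to derive Corollary \ref{cor:mlrPA} as an immediate instance of Theorem \ref{thm:sfrPA}, by choosing the functions $f_i$ so that strong $f_i$-randomness coincides with Martin-L\"of randomness.

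Concretely, I would set $f_i(\sigma)=|\sigma|$ for every $i\in\NN$, i.e., take $f_i=f_1$ in the notation of Remark \ref{rem:s-rand}. Since $f_1$ is plainly computable, Theorem \ref{thm:sfrPA} applies and produces some $Z$ of $\PA$-degree such that each $X_i$ is strongly $f_1$-random relative to $Z$.

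The only remaining point is the identification of strong $1$-randomness with Martin-L\"of randomness, both unrelativized and relative to $Z$. Unrelativized, this is exactly the observation recorded in Remark \ref{rem:s-rand}: the canonical Martin-L\"of tests coincide with the tests for strong $f_1$-randomness. The same proof relativizes verbatim to any oracle $Z$, because Definition \ref{dfn:frsfr} and the notion of Martin-L\"of test both relativize in the obvious way (replacing r.e.\ by $Z$-r.e.\ throughout), and the equality $\pwt_{f_1}(A)=\mu(\llb A\rrb)$ used in the equivalence does not depend on the oracle. Thus ``$X_i$ is Martin-L\"of random'' is equivalent to ``$X_i$ is strongly $f_1$-random,'' and ``$X_i$ is Martin-L\"of random relative to $Z$'' is equivalent to ``$X_i$ is strongly $f_1$-random relative to $Z$.''

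There is no real obstacle here; the substance of the corollary is packaged entirely into Theorem \ref{thm:sfrPA}, whose proof in turn rests on the Ku\v{c}era/G\'acs Theorem, Theorem \ref{thm:mlrPA}, and Theorem \ref{thm:sfrXYZ}. The novelty in the corollary is only that a single $\PA$-degree $Z$ works simultaneously for countably many Martin-L\"of random sequences $X_i$, and this uniformity is already built into the statement of Theorem \ref{thm:sfrPA}. So the entire proof reduces to a single line: apply Theorem \ref{thm:sfrPA} with $f_i(\sigma)=|\sigma|$ for all $i$, and invoke the equivalence of Martin-L\"of randomness with strong $1$-randomness (unrelativized and relative to $Z$).
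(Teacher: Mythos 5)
Your proposal is correct and is essentially identical to the paper's own proof: both take $f_i(\sigma)=|\sigma|$ for all $i$, invoke Remark \ref{rem:s-rand} to identify Martin-L\"of randomness (relativized and unrelativized) with strong $f$-randomness, and then apply Theorem \ref{thm:sfrPA}.
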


\begin{proof}
  Consider $f:\{0,1\}^*\to[0,\infty)$ where $f(\sigma)=|\sigma|$ for
  all $\sigma$.  By Remark \ref{rem:s-rand} $X_i$ is Martin-L\"of
  random if and only if $X_i$ is strongly $f$-random, and similarly
  $X_i$ is Martin-L\"of random relative to $Z$ if and only if $X_i$ is
  strongly $f$-random relative to $Z$.  Apply Theorem \ref{thm:sfrPA}
  with $f_i=f$ for all $i$.
\end{proof}

We end this section by presenting a kind of Borel/Cantelli Lemma for
strong $f$-randomness.  Let us say that $X$ is \underline{strongly
  $\BC$-$f$-random} if $\{i\mid X\in\llb A_i\rrb\}$ is finite whenever
$A_i$ is uniformly r.e.\ and $\sum_i\pwt_f(A_i)<\infty$.  This notion
resembles a generalization of Tadaki's earlier notion of Solovay
$D$-randomness \cite[Definition 3.8]{tadaki-A}.

\begin{thm}
  \label{thm:sBCfrXYZ}
  Let $f:\{0,1\}^*\to[-\infty,\infty]$ be computable.  Suppose $X$ is
  strongly $f$-random and Turing reducible to $Y$ where $Y$ is
  Martin-L\"of random relative to $Z$.  Then $X$ is strongly
  $\BC$-$f$-random relative to $Z$.
\end{thm}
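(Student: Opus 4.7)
The plan is to run the proof of Theorem \ref{thm:sfrXYZ} in parallel across the indices $i$, and then convert the resulting sequence of $Z$-open sets into a Solovay test that $Y$ must pass because $Y$ is Martin-L\"of random relative to $Z$.

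Concretely, assume for contradiction that $X$ is not strongly $\BC$-$f$-random relative to $Z$: fix a uniformly $Z$-r.e.\ family $A_i^Z$, $i\in\NN$, with $\sum_i\pwt_f(A_i^Z)<\infty$ and with $X\in\llb A_i^Z\rrb$ for infinitely many $i$. As in the proof of Theorem \ref{thm:sfrXYZ}, fix a partial recursive functional $\Phi$ with $X=\Phi^Y$, form the Levin system $V_\sigma=\{\Ybar\mid\Phi^\Ybar\supseteq\sigma\}$, apply Lemma \ref{lem:sfr} to obtain $c$ with $\mu(V_{X\res n})<2^{c-f(X\res n)}$ for all $n$, and apply Lemma \ref{lem:tilde} with $r_\sigma=2^{c-f(\sigma)}$ to produce a Levin system $\widetilde{V}_\sigma$ satisfying $\mu(\widetilde{V}_\sigma)\le2^{c-f(\sigma)}$ for all $\sigma$ together with $\widetilde{V}_{X\res n}=V_{X\res n}\ni Y$ for all $n$.

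Next, set $W_i^Z=\bigcup_{\sigma\in A_i^Z}\widetilde{V}_\sigma=\bigcup_{\sigma\in P_i}\widetilde{V}_\sigma$ where $P_i=\widehat{A}_i^Z$. Then $W_i^Z$ is uniformly $\Sigma^{0,Z}_1$, and since $P_i$ is a prefix-free subset of $A_i^Z$ and the $\widetilde{V}_\sigma$ on $P_i$ are pairwise disjoint (by part 5 of Definition \ref{dfn:levin}), the same computation as in Theorem \ref{thm:sfrXYZ} yields
\[
\mu(W_i^Z)=\sum_{\sigma\in P_i}\mu(\widetilde{V}_\sigma)\le2^c\dwt_f(P_i)\le2^c\pwt_f(A_i^Z).
\]
Summing over $i$, we obtain $\sum_i\mu(W_i^Z)\le2^c\sum_i\pwt_f(A_i^Z)<\infty$, so $\{W_i^Z\}_i$ is a Solovay test relative to $Z$. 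Moreover, whenever $X\in\llb A_i^Z\rrb$, some initial segment $X\res n$ lies in $A_i^Z$, hence $Y\in\widetilde{V}_{X\res n}\subseteq W_i^Z$; by hypothesis this happens for infinitely many $i$.

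Thus $Y$ fails a $Z$-Solovay test, contradicting the equivalence of Solovay randomness and Martin-L\"of randomness (relativized to $Z$), under which $Y$ would lie in only finitely many members of a convergent $Z$-Solovay test. The main obstacle, as in Theorem \ref{thm:sfrXYZ}, is the interaction between the Levin system and the prefix-free $f$-weight bound; here that work has already been done in Lemmas \ref{lem:sfr} and \ref{lem:tilde}, so the present argument is essentially an indexed repackaging of the earlier one, with the $2^{-i}$ bounds replaced by arbitrary summable weights.
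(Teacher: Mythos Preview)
Your proof is correct and follows essentially the same approach as the paper's: both reuse the Levin system $V_\sigma$, the constant $c$ from Lemma \ref{lem:sfr}, the trimmed system $\widetilde V_\sigma$ from Lemma \ref{lem:tilde}, and the sets $W_i^Z$ and $P_i$ exactly as in Theorem \ref{thm:sfrXYZ}, then sum the bound $\mu(W_i^Z)\le 2^c\pwt_f(A_i^Z)$ over $i$ and invoke the relativized Solovay lemma to contradict Martin-L\"of randomness of $Y$ relative to $Z$. The paper's write-up is simply more compressed, referring back to the earlier proof rather than restating the construction.
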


\begin{proof}
  Suppose $X$ is not strongly $\BC$-$f$-random relative to $Z$.  Let
  $A_i^Z$ be uniformly $Z$-r.e.\ such that
  $\sum_i\pwt_f(A_i^Z)<\infty$ and $X\in\llb A_i^Z\rrb$ for infinitely
  many $i$.  Let $V_\sigma$, $c$, $\widetilde{V}_\sigma$, $W_i^Z$,
  $P_i$ be as in the proof of Theorem \ref{thm:sfrXYZ}.  For all $i$
  we have $\mu(W_i^Z)\le2^c\pwt_f(A_i^Z)$, hence
  $\sum_i\mu(W_i^Z)\le2^c\sum_i\pwt_f(A_i^Z)<\infty$.  On the other
  hand, for all $i$ such that $X\in\llb A_i^Z\rrb$ we have $Y\in
  W_i^Z$, so $Y\in W_i^Z$ for infinitely many $i$.  Relativizing
  Solovay's Lemma \cite[Lemma 3.5]{aedsh} to $Z$, we see that $Y$ is
  not Martin-L\"of random relative to $Z$, Q.E.D.
\end{proof}

\begin{thm}
  \label{thm:sBCfrPA}
  Let $f:\{0,1\}^*\to[-\infty,\infty]$ be computable.  If $X$ is
  strongly $f$-random, then $X$ is strongly $\BC$-$f$-random relative
  to some $\PA$-degree.
\end{thm}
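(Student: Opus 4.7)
The plan is to follow the same template that yielded Theorem \ref{thm:sfrPA} and Corollary \ref{cor:sfrPA}, but with the new Borel/Cantelli-style transfer lemma, Theorem \ref{thm:sBCfrXYZ}, in place of Theorem \ref{thm:sfrXYZ}. The three ingredients one chains together are the Ku\v{c}era/G\'acs Theorem, Theorem \ref{thm:mlrPA}, and Theorem \ref{thm:sBCfrXYZ}.

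First I would invoke the Ku\v{c}era/G\'acs Theorem to obtain $Y\in\{0,1\}^\NN$ which is Martin-L\"of random and satisfies $X\leT Y$. Next I would apply Theorem \ref{thm:mlrPA} to this $Y$ to produce some $Z$ of $\PA$-degree such that $Y$ is Martin-L\"of random relative to $Z$. Note that at this stage no use has been made of the hypothesis that $X$ is strongly $f$-random; the Turing reduction $X\leT Y$ and the Martin-L\"of randomness of $Y$ relative to $Z$ are all that is required to set up the application of the transfer theorem.

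Finally, I would feed the hypothesis that $X$ is strongly $f$-random, together with $X\leT Y$ and the relative randomness of $Y$ over $Z$, into Theorem \ref{thm:sBCfrXYZ}, which delivers precisely the conclusion that $X$ is strongly $\BC$-$f$-random relative to $Z$. Since $Z$ was chosen of $\PA$-degree, the theorem is proved.

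There is no real obstacle here: the proof is a mechanical combination of Theorem \ref{thm:sBCfrXYZ} with the Ku\v{c}era/G\'acs-plus-Theorem \ref{thm:mlrPA} construction of a $\PA$-degree $Z$ above which a chosen Martin-L\"of random $Y$ remains Martin-L\"of random. All the substantive work (the Borel/Cantelli-style measure computation involving the Levin system $\widetilde{V}_\sigma$ and the relativized Solovay Lemma) has already been done inside the proof of Theorem \ref{thm:sBCfrXYZ}, exactly paralleling the way Corollary \ref{cor:sfrPA} is a two-line consequence of Theorem \ref{thm:sfrXYZ}.
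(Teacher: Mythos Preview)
Your proposal is correct and follows exactly the same approach as the paper's proof: Ku\v{c}era/G\'acs to get a Martin-L\"of random $Y\geq_{\mathrm{T}}X$, Theorem \ref{thm:mlrPA} to obtain a $\PA$-degree $Z$ with $Y$ Martin-L\"of random relative to $Z$, and then Theorem \ref{thm:sBCfrXYZ} to conclude. The paper's proof is essentially the three-line version of what you wrote.
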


\begin{proof}
  By the Ku\v{c}era/G\'acs Theorem, let $Y$ be Martin-L\"of random
  such that $X$ is Turing reducible to $Y$.  By Theorem
  \ref{thm:mlrPA} let $Z$ be of $\PA$-degree such that $Y$ is
  Martin-L\"of random relative to $Z$.  If $X$ is strongly $f$-random,
  Theorem \ref{thm:sBCfrXYZ} tells us that $X$ is strongly
  $\BC$-$f$-random relative to $Z$, Q.E.D.
\end{proof}

\begin{cor}
  \label{cor:sBCfr}
  Let $f:\{0,1\}^*\to[-\infty,\infty]$ be computable.  Then strong
  $f$-randomness is equivalent to strong $\BC$-$f$-randomness.
\end{cor}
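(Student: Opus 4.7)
The plan is to derive this corollary almost entirely from Theorem \ref{thm:sBCfrPA}, combined with an elementary direct argument for the converse. The two implications to establish are: (i) strong $f$-randomness implies strong $\BC$-$f$-randomness; and (ii) strong $\BC$-$f$-randomness implies strong $f$-randomness.

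For (i), I would begin with an observation about relativization of the Borel/Cantelli notion: if $X$ is strongly $\BC$-$f$-random relative to $Z$, then in particular no \emph{unrelativized} uniformly r.e.\ sequence $A_i$ with $\sum_i\pwt_f(A_i)<\infty$ can have $X\in\llb A_i\rrb$ for infinitely many $i$, since every r.e.\ set is $Z$-r.e. Hence strong $\BC$-$f$-randomness relative to $Z$ implies (unrelativized) strong $\BC$-$f$-randomness. Now, assuming $X$ is strongly $f$-random, Theorem \ref{thm:sBCfrPA} supplies some $\PA$-degree $Z$ relative to which $X$ is strongly $\BC$-$f$-random; dropping the oracle gives strong $\BC$-$f$-randomness as desired.

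For (ii), I would argue contrapositively: suppose $X$ is not strongly $f$-random, and fix a uniformly r.e.\ sequence $A_i$ with $\pwt_f(A_i)\le2^{-i}$ and $X\in\bigcap_i\llb A_i\rrb$. Then $\sum_i\pwt_f(A_i)\le\sum_i2^{-i}=1<\infty$, while $\{i\mid X\in\llb A_i\rrb\}=\NN$ is infinite, showing that $X$ fails to be strongly $\BC$-$f$-random.

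There is no real obstacle here; the whole content of the corollary is packaged inside Theorem \ref{thm:sBCfrPA}, whose proof in turn relies on Ku\v{c}era/G\'acs, Theorem \ref{thm:mlrPA}, and Theorem \ref{thm:sBCfrXYZ}. The only subtlety worth flagging is the direction of the relativization in step (i): one must be careful that passing from ``relative to $Z$'' to ``unrelativized'' is a weakening of the conclusion (not a strengthening), which it is precisely because the class of admissible test sequences $A_i$ only shrinks when we restrict to r.e.\ in place of $Z$-r.e.
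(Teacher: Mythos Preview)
Your proof is correct and follows the same route as the paper: the paper simply says that strong $\BC$-$f$-randomness $\Rightarrow$ strong $f$-randomness is trivial (your contrapositive argument in (ii)) and that the converse is immediate from Theorem~\ref{thm:sBCfrPA} (your step (i), where you have made explicit the observation that relativized strong $\BC$-$f$-randomness implies the unrelativized version).
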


\begin{proof}
  Trivially strong $\BC$-$f$-randomness implies strong $f$-randomness.
  The converse is immediate from Theorem \ref{thm:sBCfrPA}.
\end{proof}

\begin{rem}
  \label{rem:sBCfr}
  It is possible to give a direct proof of Corollary \ref{cor:sBCfr}
  resembling the standard proof of Solovay's Lemma \cite[Lemma
  3.5]{aedsh}.
\end{rem}

\section{Propagation of non-K-triviality}
\label{sec:nkt}

Recall from \cite{do-hi-book,nies-book} that $X$ is
\underline{$\LR$-reducible} to $Z$, abbreviated $X\leLR Z$, if
$\forall Y\,((Y$ Martin-L\"of random relative to $Z)\limp(Y$
Martin-L\"of random relative to $X))$.  The concept of
$\LR$-recucibility has been very useful \cite{kj-mi-so,aedsh,massmtr}
in the reverse mathematics of measure-theoretic regularity.  It is
also known (see \cite[Chapter 11]{do-hi-book} or \cite[Chapter
5]{nies-book}) that $\LR$-reducibility can be used to characterize
\underline{K-triviality}.  Namely, $X$ is K-trivial if and only if
$X\leLR0$.

From our point of view in this paper, it seems reasonable to view
non-K-triviality as a kind of partial randomness notion.  Accordingly,
we now present appropriate analogs of our main propagation results,
Theorems \ref{thm:sfrXYZ} and \ref{thm:sfrPA}.  Our results in this
section are easy consequences of previously known characterizatons of
K-triviality.

\begin{thm}
  \label{thm:nktXYZ}
  Suppose $X$ is Turing reducible to $Y$ where $Y$ is Martin-L\"of
  random relative to $Z$.  Then $X\nleLR0$ implies $X\nleLR Z$.
\end{thm}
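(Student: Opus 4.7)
The plan is to argue the contrapositive: assume $X\leLR Z$ and deduce that $X\leLR0$, i.e., that $X$ is K-trivial.

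First, unwinding the definition of $\leLR$, the assumption $X\leLR Z$ says that every set which is Martin-L\"of random relative to $Z$ is also Martin-L\"of random relative to $X$. Combined with the hypothesis that $Y$ is Martin-L\"of random relative to $Z$, this immediately yields that $Y$ is Martin-L\"of random relative to $X$. But we are also given $X\leT Y$, so the pair $(X,Y)$ witnesses that $X$ is a \emph{base for Martin-L\"of randomness}: there exists $Y\geT X$ such that $Y$ is Martin-L\"of random relative to $X$.

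At this point the proof reduces to invoking the theorem of Hirschfeldt/Nies/Stephan (see \cite[Chapter 11]{do-hi-book} or \cite[Chapter 5]{nies-book}), which asserts that every base for Martin-L\"of randomness is K-trivial. Combining this with the standard characterization that $X$ is K-trivial if and only if $X\leLR0$ (also in \cite[Chapter 11]{do-hi-book} or \cite[Chapter 5]{nies-book}), we conclude $X\leLR0$, as required. Contrapositively, $X\nleLR0$ implies $X\nleLR Z$.

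The only substantive input is the Hirschfeldt/Nies/Stephan characterization of K-trivials as bases for Martin-L\"of randomness, but since this is cited as a known result, there is no real obstacle here; the argument is essentially a two-line deduction, matching the paper's comment that the results in this section are ``easy consequences of previously known characterizations of K-triviality.''
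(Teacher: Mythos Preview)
Your proof is correct and is essentially the contrapositive of the paper's own argument: both hinge on the equivalence of K-triviality with being a base for Martin-L\"of randomness, and the remaining steps are just unwinding the definition of $\leLR$. The only cosmetic difference is that the paper argues directly from $X\nleLR0$ to ``$X$ is not a base,'' hence $Y$ is not ML-random relative to $X$, hence $X\nleLR Z$.
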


\begin{proof}
  Since $X\nleLR0$, it follows by \cite[Chapter 11]{do-hi-book} or
  \cite[Chapter 5]{nies-book} that $X$ is not a \emph{base for
    Martin-L\"of randomness}.  In particular, since $X$ is Turing
  reducible to $Y$, $Y$ is not Martin-L\"of random relative to $X$.
  But then, since $Y$ is Martin-L\"of random relative to $Z$, we have
  $X\nleLR Z$, Q.E.D.
\end{proof}

\begin{thm}
  \label{thm:nktPA}
  Suppose $X_i\nleLR0$ for all $i\in\NN$.  Then, we can find $Z$ of
  $\PA$-degree such that $X_i\nleLR Z$ for all $i\in\NN$.
\end{thm}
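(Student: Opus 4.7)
The plan is to mirror the proof of Theorem \ref{thm:sfrPA} almost verbatim, substituting Theorem \ref{thm:nktXYZ} for Theorem \ref{thm:sfrXYZ} at the final step. The three ingredients are the Ku\v{c}era/G\'acs Theorem, Theorem \ref{thm:mlrPA}, and the newly established Theorem \ref{thm:nktXYZ}.

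First I would collapse the countable sequence $(X_i)_{i\in\NN}$ into a single set by forming the effective join $X=\bigoplus_i X_i$, so that $X_i\leT X$ for every $i$ uniformly. Applying the Ku\v{c}era/G\'acs Theorem to $X$ yields a Martin-L\"of random $Y$ with $X\leT Y$, and hence $X_i\leT Y$ for every $i\in\NN$.

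Next, since $Y$ is Martin-L\"of random, Theorem \ref{thm:mlrPA} produces a $Z$ of $\PA$-degree such that $Y$ is Martin-L\"of random relative to $Z$. Finally, for each $i\in\NN$ we invoke Theorem \ref{thm:nktXYZ}: since $X_i\leT Y$, $Y$ is Martin-L\"of random relative to $Z$, and $X_i\nleLR 0$ by hypothesis, it follows that $X_i\nleLR Z$. This is the desired conclusion for all $i$ simultaneously, since the choice of $Z$ did not depend on $i$.

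There is no genuine obstacle here — the hard content is already packaged in Theorem \ref{thm:nktXYZ}, which in turn rests on the Hirschfeldt/Nies/Stephan characterization of K-triviality via bases for Martin-L\"of randomness cited from \cite{do-hi-book,nies-book}. The present theorem is simply the ``$\PA$-degree'' propagation version, obtained by the same Ku\v{c}era/G\'acs plus Theorem \ref{thm:mlrPA} bridge that was used to pass from Theorem \ref{thm:sfrXYZ} to Theorem \ref{thm:sfrPA}.
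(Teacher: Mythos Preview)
Your proof is correct and follows the template of Theorem \ref{thm:sfrPA} exactly as you describe. The paper, however, takes a different route: it does not invoke Theorem \ref{thm:nktXYZ} at all. Instead it unfolds the definition of $\leLR$: for each $i$, since $X_i\nleLR 0$, there is a $Y_i$ which is Martin-L\"of random but not Martin-L\"of random relative to $X_i$. Corollary \ref{cor:mlrPA} then supplies a single $Z$ of $\PA$-degree such that every $Y_i$ remains Martin-L\"of random relative to $Z$; since $Y_i$ is not Martin-L\"of random relative to $X_i$, the definition of $\leLR$ yields $X_i\nleLR Z$ immediately.

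The difference is in where the heavy lifting lands. Your argument leans on Theorem \ref{thm:nktXYZ}, and hence on the characterization of K-triviality via bases for Martin-L\"of randomness. The paper's argument sidesteps that characterization entirely, using only the raw definition of $\leLR$ together with Corollary \ref{cor:mlrPA}. On the other hand, Corollary \ref{cor:mlrPA} is itself derived from the \S\ref{sec:sfr} machinery (Theorem \ref{thm:sfrPA}, hence Ku\v{c}era/G\'acs and Theorem \ref{thm:sfrXYZ}), so neither proof is self-contained; they simply cite different prior results. Your version has the virtue of uniformity with the pattern of \S\ref{sec:sfr}, while the paper's version highlights that for this particular notion one can bypass the XYZ step and argue straight from the definition.
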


\begin{proof}
  For each $i$ let $Y_i$ be Martin-L\"of random but not Martin-L\"of
  random relative to $X_i$.  By Corollary \ref{cor:mlrPA} let $Z$ be
  of $\PA$-degree such that $\forall i\,(Y_i$ is Martin-L\"of random
  relative to $Z)$.  It follows that $\forall i\,(X_i\nleLR Z)$,
  Q.E.D.
\end{proof}

\begin{rem}
  In Theorems \ref{thm:nktXYZ} and \ref{thm:nktPA}, the conclusion
  $X\nleLR Z$ implies that $X\oplus Z\nleLR Z$, i.e., $X$ is not
  K-trivial relative to $Z$.  On the other hand, results such as
  Theorems \ref{thm:mlrPA} and \ref{thm:sfrPA} and \ref{thm:nktPA}
  bear an obvious resemblance to the well known \emph{GKT Theorem}
  (see Gandy/Kreisel/Tait \cite{gkt} or \cite[Theorem 2.5]{js} or
  \cite[Theorem VIII.2.24]{sosoa}).  Indeed, Theorem \ref{thm:nktPA}
  is just the GKT Theorem with Turing reducibility replaced by
  $\LR$-reducibility.
\end{rem}

\section{Propagation of diagonal nonrecursiveness}
\label{sec:dnr}

Let $\{n\}$ denote the partial recursive functional with index $n$.
Let $\DNR$ be the set of functions $f:\NN\to\NN$ which are
\underline{diagonally nonrecursive}, i.e., $f(n)\ne\{n\}(n)$ for all
$n$.  Known results concerning diagonal nonrecursiveness may be found
in \nocite{lmps8}\cite{askhls,jockusch-fpf,kms-tams,massrand}.  We
also consider \underline{relative $\DNR$-ness}:
$\DNR^Z=\{f\in\NN^\NN\mid\forall n\,(f(n)\ne\{n\}^Z(n))\}$.  The
purpose of this section is to obtain propagation results for diagonal
nonrecursiveness.

\begin{thm}
  \label{thm:dnrXYZ}
  Suppose there exists a $\DNR$ function which is Turing reducible to
  $X$.  Suppose also that $X$ is Turing reducible to $Y$ where $Y$ is
  Martin-L\"of random relative to $Z$.  Then there exists a $\DNR^Z$
  function which is Turing reducible to $X$.
\end{thm}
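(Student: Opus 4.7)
The plan is to reduce Theorem \ref{thm:dnrXYZ} to the propagation of strong $f$-randomness (Theorem \ref{thm:sfrXYZ}) via the Kolmogorov-complexity characterization of $\DNR$-computing degrees due to Kjos-Hanssen, Merkle, and Stephan (cited as kms-tams in the paper).

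First I would invoke KMS in the following form: a set computes a $\DNR$ function if and only if it computes a sequence $B$ satisfying $\KP(B\res n)\ge h(n)$ for some computable order $h$. Applied to the hypothesis, this yields $B\leT X$ together with its order $h$. Since $h$ is unbounded I can pick a slower computable order $h_0$ with $h_0+2\log_2 h_0\le h$; setting $f(\sigma)=h(|\sigma|)$ and $f_0(\sigma)=h_0(|\sigma|)$, the bound $\KP(B\res n)\ge h(n)-O(1)$ says that $B$ is $f$-complex. By Theorem \ref{thm:rc} it is $f$-random, and since $f\ge f_0+2\log_2 f_0$ pointwise, Theorem \ref{thm:log2} gives that $B$ is strongly $f_0$-random. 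Applying Theorem \ref{thm:sfrXYZ} with $B$ in place of $X$ and $f_0$ in place of $f$, the hypotheses $B\leT X\leT Y$ and ``$Y$ is Martin-L\"of random relative to $Z$'' give that $B$ is strongly $f_0$-random relative to $Z$; by the relativization of Theorem \ref{thm:srsc} this translates to $\KP^Z(B\res n)\ge h_0(n)-O(1)$.

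Finally I would extract a $\DNR^Z$ function from $B$ using the easy direction of the (relativized) KMS argument. Choose a computable order $j$ such that $h_0(j(e))\ge 5\log_2(e+2)+K$ for every $e$, where $K$ is a fixed sufficiently large constant absorbing the universal additive constants in the complexity bounds; this is possible because $h_0$ is computable and unbounded. Define $\hat g(e)=B\res j(e)$, interpreted as a natural number. Then $\hat g\leT B\leT X$. Since $\KP^Z(\hat g(e))\ge h_0(j(e))-O(1)$, while whenever $\{e\}^Z(e)\downarrow$ we have $\KP^Z(\{e\}^Z(e))\le\KP^Z(e)+O(1)\le 2\log_2(e+2)+O(1)$, the choice of $K$ forces $\hat g(e)\ne\{e\}^Z(e)$ for every $e$. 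Hence $\hat g$ is a $\DNR^Z$ function Turing reducible to $X$.

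I expect the main obstacle to be the first step: pinning down the exact form of the KMS characterization used above. KMS is most commonly stated in terms of plain complexity $C$ of the values of a function (i.e., $C(g(n))\ge h(n)$ for some computable order $h$), rather than prefix-free complexity $\KP$ of initial segments of a sequence. Translating between the two formulations involves $O(\log n)$ adjustments to $h$, which must be made carefully so as not to destroy the order property; however, once this is done, the rest of the argument is a clean application of the strong $f$-randomness machinery developed in earlier sections.
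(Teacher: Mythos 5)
Your strategy --- reducing the theorem to Theorem \ref{thm:sfrXYZ} via a complexity characterization of $\DNR$-computing degrees --- is a sound alternative to the paper's proof, which instead runs a direct construction via the Parametrized Recursion Theorem (Lemma \ref{lem:pnj}) and Solovay's Lemma; the paper hints at this alternative route in Remark \ref{rem:fine1}. However, the version of the Kjos-Hanssen/Merkle/Stephan characterization you invoke is false, and this is a real gap. You claim that $X$ computes a $\DNR$ function iff $X$ computes some $B$ with $\KP(B\res n)\ge h(n)$ for a \emph{computable} order $h$, i.e., iff $X$ computes a \emph{complex} sequence. The actual theorem (\cite[Theorem 2.3]{kms-tams}) says that $X$ computes a $\DNR$ function iff $X$ is \emph{autocomplex}, where the witnessing order $h$ is only required to be $X$-computable. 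These two notions genuinely differ in Turing degree: Remark \ref{rem:fine2}, citing \cite[Theorem 1.8]{askhls}, records the existence of an autocomplex $X$ --- hence one computing a $\DNR$ function --- such that no complex $Y$ is $\leT X$. For such an $X$ your first step produces no $B$, and the argument never starts.

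The repair is to replace your first step by the paper's Theorem \ref{thm:autofr}: $X$ computes a $\DNR$ function iff $X$ is strongly $f$-random for some computable $f:\{0,1\}^*\to\NN$ with $\{f(X\res n)\mid n\in\NN\}$ unbounded. The point is that $f$ need not be length-invariant, so it can encode an $X$-computable rather than computable complexity lower bound --- exactly the latitude that autocomplexity permits and complexity forbids. Using Theorem \ref{thm:autofr} also makes your detour through Theorem \ref{thm:log2} unnecessary, since it hands you strong $f$-randomness directly. From here the rest of your plan works: Theorem \ref{thm:sfrXYZ} gives $X$ strongly $f$-random relative to $Z$; the relativized Theorem \ref{thm:srsc} gives a constant $c$ with $\KA^Z(X\res n)\ge f(X\res n)-c$ for all $n$; then $h(n)=\max(1,f(X\res n)-c)$ is unbounded, $\leT X$, and bounds $\KA^Z(X\res n)$ from below; and the easy direction of the relativized KMS argument extracts a $\DNR^Z$ function of the form $e\mapsto X\res j(e)$ with $j\leT X$ (not merely $\leT X\oplus Z$, because $f$, $c$, and hence $j$ are computable from $X$ alone), so the resulting $\DNR^Z$ function is $\leT X$ as the theorem requires.
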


In order to prove Theorem \ref{thm:dnrXYZ} we need the following
lemma, which is a variant of the Parametrized Recursion Theorem.  In
stating and proving our lemma, we shall use standard
recursion-theoretic notation.  In particular, for any expression $E$
we write $E{\downarrow}$ to mean that $E$ is defined, and
$E{\uparrow}$ to mean that $E$ is undefined.  We also write $E_1\simeq
E_2$ to mean that either ($E_1{\downarrow}$ and $E_2{\downarrow}$ and
$E_1=E_2)$ or $(E_1{\uparrow}$ and $E_2{\uparrow})$.  Via G\"odel
numbering, we identify finite sequences of positive integers with
positive integers.  We write $f\leT X$ to mean that $f$ is Turing
reducible to $X$.

\begin{lem}
  \label{lem:pnj}
  Let $\Theta(n,j,\sigma,-)$ be a partial recursive functional.
  Then, we can find a primitive recursive function $p(n,j)$ such that
  \[
  \{p(n,j)\}(-)\,\,\simeq\,\,\Theta(n,j,\langle p(n,i)\mid i\le
  j\rangle,-)
  \]
  for all $n,j,-$.
\end{lem}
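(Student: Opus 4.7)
The plan is to combine Kleene's parametrized recursion theorem with primitive recursion on $j$. First I would produce, via the standard $s$-$m$-$n$ plus fixed-point construction, a primitive recursive function $F(n,j,q)$ such that
\[
\{F(n,j,q)\}(-)\;\simeq\;\Theta\bigl(n,\,j,\,q\cat\langle F(n,j,q)\rangle,\,-\bigr)
\]
for all $n,j$ and all integer codes $q$ of finite sequences of positive integers. The point is that in the usual proof of the recursion theorem the fixed-point index is exhibited as $s(e_0,n,j,q)$, where $s$ is supplied by the $s$-$m$-$n$ theorem and hence primitive recursive, and $e_0$ is a specific integer independent of the parameters. Thus $F$ is in fact primitive recursive in $(n,j,q)$.

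With $F$ in hand, I would define $p(n,j)$ by primitive recursion on $j$, with $n$ as a parameter, by setting
\[
p(n,0)=F(n,0,\langle\rangle),\qquad p(n,j+1)=F\bigl(n,\,j+1,\,\langle p(n,0),\ldots,p(n,j)\rangle\bigr).
\]
Since $F$ and the standard sequence-coding operations are primitive recursive, so is $p$. The required conclusion
\[
\{p(n,j)\}(-)\;\simeq\;\Theta\bigl(n,\,j,\,\langle p(n,i)\mid i\le j\rangle,\,-\bigr)
\]
is then immediate by unwinding one step: by construction $p(n,j)=F(n,j,\langle p(n,i)\mid i<j\rangle)$, so the defining equivalence for $F$, applied with $q=\langle p(n,i)\mid i<j\rangle$, gives precisely what is claimed.

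The only real obstacle is clerical: one must verify that the fixed point coming out of the recursion theorem can genuinely be read off as a primitive recursive function of the parameters $(n,j,q)$, rather than merely a total recursive one. This is standard and follows from the primitive recursiveness of the $s$-$m$-$n$ function, but it is worth recording explicitly, since the whole strength of Lemma~\ref{lem:pnj} over the bare recursion theorem lies in the assertion that $p$ is primitive recursive uniformly in both of its arguments.
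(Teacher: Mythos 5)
Your proof is correct and matches the paper's argument essentially step for step: both invoke the parametrized recursion theorem to obtain a primitive recursive $F(n,j,q)$ (the paper calls it $q(n,j,\sigma)$) with $\{F(n,j,q)\}(-)\simeq\Theta(n,j,q\cat\langle F(n,j,q)\rangle,-)$, then define $p(n,j)=F(n,j,\langle p(n,i)\mid i<j\rangle)$ by course-of-values recursion on $j$ and unwind once. Your closing observation about why the fixed point is primitive recursive in the parameters is a useful elaboration of what the paper compresses into the phrase ``Parametrized Recursion Theorem.''
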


\begin{proof}
  By the Parametrized Recursion Theorem, let $q$ be a primitive
  recursive function such that
  $\{q(n,j,\sigma)\}(-)\simeq\Theta(n,j,\sigma\cat\langle
  q(n,j,\sigma)\rangle,-)$ for all $n,j,\sigma,-$.  Define $p$
  primitive recursively by letting $p(n,j)=q(n,i,\langle p(n,i)\mid
  i<j\rangle)$ for all $n,j$.  We then have
  \[
  \begin{array}{rcl}
    \{p(n,j)\}(-) &\simeq& \{q(n,j,\langle p(n,i)\mid
    i<j\rangle)\}(-)\\[4pt]
    &\simeq& \Theta(n,j,\langle p(n,i)\mid i<j\rangle\cat\langle
    q(n,j,\langle p(n,i)\mid i<j\rangle)\rangle,-)\\[4pt]
    &\simeq& \Theta(n,j,\langle p(n,i)\mid i<j\rangle\cat\langle
    p(n,j)\rangle,-)\\[4pt]
    &\simeq& \Theta(n,j,\langle p(n,i)\mid i\le j\rangle,-)
  \end{array}
  \]
  and this proves our lemma.
\end{proof}

We now prove Theorem \ref{thm:dnrXYZ}.

\begin{proof}[Proof of Theorem \ref{thm:dnrXYZ}]
  Let $f\in\NN^\NN$ be $\DNR$ and $\leT X$.  Then $f\leT Y$ so let
  $\Phi$ be a partial recursive functional such that $f=\Phi^Y$, i.e.,
  $f(n)=\Phi(Y,n)$ for all $n$.  As in \S\ref{sec:sfr} let $\mu$ be
  the fair-coin probability measure on $\{0,1\}^\NN$.  Define a
  partial recursive function $\theta(n,j,\sigma)\simeq$ some $m$ such
  that $\mu(\{\Ybar\mid\Phi^\Ybar(\sigma(j)){\downarrow}=m$ and
  $(\forall i<j)\,(\Phi^\Ybar(\sigma(i)){\downarrow}\ne
  \{\sigma(i)\}(\sigma(i)){\downarrow})\})>2^{-n}$.  Apply Lemma
  \ref{lem:pnj} to obtain a primitive recursive function $p(n,j)$ such
  that $\{p(n,j)\}(p(n,j))\simeq$ some $m$ such that
  $\mu(V_{n,j,m})>2^{-n}$ where
  $V_{n,j,m}=\{\Ybar\mid\Phi^\Ybar(p(n,j)){\downarrow}=m$ and
  $(\forall i<j)\,(\Phi^\Ybar(p(n,i)){\downarrow}\ne
  \{p(n,i)\}(p(n,i)){\downarrow})\}$.  Thus
  $\{p(n,j)\}(p(n,j)){\downarrow}$ implies $\mu(V_{n,j})>2^{-n}$ where
  $V_{n,j}=V_{n,j,\{p(n,j)\}(p(n,j))}$.  On the other hand, $i\ne j$
  implies $V_{n,i}\cap V_{n,j}=\emptyset$ so for each $n$ there is at
  least one $j\le2^n$ such that $\{p(n,j)\}(p(n,j)){\uparrow}$.
  
  Let $\Psi$ be a partial recursive functional defined by
  \begin{center}
    $\Psi^\Ybar(n)\simeq\langle\Phi^\Ybar(p(n,i))\mid i\le2^n\rangle$
  \end{center}
  for all $n$.  In particular we have $g\in\NN^\NN$ defined by
  \begin{center}
    $g(n)=\Psi^Y(n)=\langle f(p(n,i))\mid i\le2^n\rangle$
  \end{center}
  for all $n$.  Clearly $g\leT f\leT X$, so it will suffice to prove
  that $g(n)\ne\{n\}^Z(n)$ for all but finitely many $n$.

  Let $U_n^Z=\{\Ybar\mid\Psi^\Ybar(n){\downarrow}=\{n\}^Z(n){\downarrow}\}$.
  Clearly $U_n^Z$ is uniformly $\Sigma^{0,Z}_1$.  Given a rational
  number $r$, let $U_n^Z[r]$ be $U_n^Z$ enumerated so long as its
  $\mu$-measure is $\le r$.  Thus $U_n^Z[r]$ is uniformly
  $\Sigma^{0,Z}_1$ and $\mu(U_n^Z[r])\le r$.  Moreover,
  $U_n^Z[r]=U_n^Z$ if and only if $\mu(U_n^Z)\le r$.  Since $Y$ is
  Martin-L\"of random, it follows by Solovay's Lemma \cite[Lemma
  3.5]{aedsh} that $Y\notin U_n^Z[2^{-n}]$ for all but finitely many
  $n$.  Therefore, it will suffice to prove $g(n)\ne\{n\}^Z(n)$ for
  all such $n$.

  Supposing otherwise, we would have $\Psi^Y(n)=g(n)=\{n\}^Z(n)$,
  hence $Y\in U_n^Z$, hence $\mu(U^Z_n)>2^{-n}$.  Moreover, for all
  $\Ybar\in U_n^Z$ we would have $\Psi^\Ybar(n)=\{n\}^Z(n)=g(n)$, hence
  $\Phi^\Ybar(p(n,i))=f(p(n,i))\ne\{p(n,i)\}(p(n,i))$ for all $i\le2^n$.
  Let $j\le2^n$ be such that $(\forall
  i<j)\,(\{p(n,i)\}(p(n,i)){\downarrow})$.  Then $U_n^Z\subseteq
  V_{n,j,f(p(n,j))}$, hence
  $\mu(V_{n,j,f(p(n,j))})\ge\mu(U_n^Z)>2^{-n}$, hence
  $\{p(n,j)\}(p(n,j)){\downarrow}$, so by induction on $j$ we see that
  $\{p(n,j)\}(p(n,j)){\downarrow}$ holds for all $j\le2^n$.  This
  contradiction completes the proof.
\end{proof}

\begin{thm}
  \label{thm:dnrQ}
  Let $Q$ be a nonempty $\Pi^0_1$ subset of $\{0,1\}^\NN$.  If
  $(\forall i\in\NN)\,(\exists f\in\DNR)\,(f\leT X_i)$, then $(\exists
  Z\in Q)\,(\forall i\in\NN)\,(\exists g\in\DNR^Z)\,(g\leT X_i)$.
\end{thm}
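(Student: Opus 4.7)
The plan is to assemble Theorem~\ref{thm:dnrQ} by chaining three already-available results: the Ku\v{c}era/G\'acs Theorem, Theorem~\ref{thm:mlrQ}, and Theorem~\ref{thm:dnrXYZ}. The only work beyond citing these results is verifying that the Ku\v{c}era/G\'acs step can absorb the entire countable family $\{X_i\}_{i\in\NN}$ at once, which it does via the effective join $\bigoplus_{i\in\NN}X_i$.

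First I would apply the Ku\v{c}era/G\'acs Theorem to $\bigoplus_{i\in\NN}X_i$ to obtain a single Martin-L\"of random $Y\in\{0,1\}^\NN$ satisfying $X_i\leT Y$ for every $i$. This is the same maneuver already used in the proofs of Theorem~\ref{thm:sfrPA} and Theorem~\ref{thm:sBCfrPA}, so it needs no further justification here.

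Next, because $Y$ is Martin-L\"of random and $Q$ is a nonempty $\Pi^0_1$ subset of $\{0,1\}^\NN$, Theorem~\ref{thm:mlrQ} furnishes some $Z\in Q$ such that $Y$ is Martin-L\"of random relative to $Z$. This is exactly the step that lets us land inside the prescribed $\Pi^0_1$ class $Q$ rather than in an arbitrary PA-degree, so it is essential that we cite Theorem~\ref{thm:mlrQ} and not merely Theorem~\ref{thm:mlrPA}.

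Finally, fix $i\in\NN$. By hypothesis there is some $f\in\DNR$ with $f\leT X_i$, and the previous two steps give $X_i\leT Y$ together with $Y$ Martin-L\"of random relative to $Z$. Theorem~\ref{thm:dnrXYZ} then produces a $g\in\DNR^Z$ with $g\leT X_i$. Since $i$ was arbitrary, the chosen $Z\in Q$ witnesses the conclusion. There is no genuine obstacle in this argument—all of the technical difficulty was already absorbed by the proof of Theorem~\ref{thm:dnrXYZ} (in particular, the fixed-point construction via Lemma~\ref{lem:pnj})—so the present theorem reduces to the correct composition of three preceding lemmas.
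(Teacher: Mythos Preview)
Your proof is correct and follows exactly the paper's own argument: apply Ku\v{c}era/G\'acs to obtain a single Martin-L\"of random $Y$ above all the $X_i$, invoke Theorem~\ref{thm:mlrQ} to find $Z\in Q$ with $Y$ Martin-L\"of random relative to $Z$, and then apply Theorem~\ref{thm:dnrXYZ} to each $X_i$. The only cosmetic difference is that you make the effective join $\bigoplus_{i\in\NN}X_i$ explicit, whereas the paper leaves it implicit.
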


\begin{proof}
  By the Ku\v{c}era/G\'acs Theorem (see \cite[Theorem
  8.3.2]{do-hi-book} or \cite[\S3.3]{nies-book} or \cite[Theorem
  3.8]{aedsh}), let $Y$ be Martin-L\"of random such that $\forall
  i\,(X_i\leT Y)$.  By Theorem \ref{thm:mlrQ} let $Z\in Q$ be such
  that $Y$ is Martin-L\"of random relative to $Z$.  If $\forall
  i\,\exists f\,(f\in\DNR$ and $f\leT X_i)$, it follows by Theorem
  \ref{thm:dnrXYZ} that $\forall i\,\exists g\,(g\in\DNR^Z$ and $g\leT
  X_i)$.
\end{proof}

\begin{cor}
  \label{cor:dnrQ}
  Let $Q$ be a nonempty $\Pi^0_1$ subset of $\{0,1\}^\NN$.  If there
  exists a $\DNR$ function which is Turing reducible to $X$, then for
  some $Z\in Q$ there exists a $\DNR^Z$ function which is Turing
  reducible to $X$.
\end{cor}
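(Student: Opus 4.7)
The plan is to observe that Corollary \ref{cor:dnrQ} is just the single-index specialization of Theorem \ref{thm:dnrQ}, so the entire task reduces to a one-line reduction. Concretely, I would set $X_i = X$ for every $i \in \NN$. The hypothesis of the corollary says there is a $\DNR$ function Turing reducible to $X$, which immediately supplies the hypothesis of Theorem \ref{thm:dnrQ} for the constant sequence, namely $(\forall i \in \NN)\,(\exists f \in \DNR)\,(f \leT X_i)$.

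Applying Theorem \ref{thm:dnrQ} to this data, I obtain some $Z \in Q$ such that $(\forall i \in \NN)\,(\exists g \in \DNR^Z)\,(g \leT X_i)$. Specializing to any single $i$ (say $i = 0$) gives a $\DNR^Z$ function Turing reducible to $X_0 = X$, which is exactly the desired conclusion. There is no obstacle to speak of here; the only thing to notice is that the corollary is stated in essentially the same form as the theorem but with a single $X$ in place of a sequence $X_i$, so the constant-sequence trick suffices. This mirrors the pattern used earlier in the paper, where Corollary \ref{cor:sfrPA} is obtained from Theorem \ref{thm:sfrPA} by setting $X_i = X$ and $f_i = f$ for all $i$.
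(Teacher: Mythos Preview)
Your proposal is correct and matches the paper's own proof exactly: the paper simply says ``This is the special case of Theorem \ref{thm:dnrQ} with $X_i=X$ for all $i\in\NN$.'' (A trivial nitpick: since the paper takes $\NN$ to be the positive integers, you would specialize to $i=1$ rather than $i=0$, but this is immaterial.)
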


\begin{proof}
  This is the special case of Theorem \ref{thm:dnrQ} with $X_i=X$ for
  all $i\in\NN$.
\end{proof}

\begin{thm}
  \label{thm:dnrPA}
  Suppose $(\forall i\in\NN)\,(\exists f\in\DNR)\,(f\leT X_i)$.  Then,
  there exists $Z$ of $\PA$-degree such that $(\forall
  i\in\NN)\,(\exists g\in\DNR^Z)\,(g\leT X_i)$.
\end{thm}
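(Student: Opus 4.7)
The plan is to deduce Theorem \ref{thm:dnrPA} directly from Theorem \ref{thm:dnrQ} by choosing $Q$ appropriately. Specifically, I would take $Q$ to be the $\Pi^0_1$ subset of $\{0,1\}^\NN$ consisting of (the characteristic functions of) complete consistent extensions of first-order Peano arithmetic, where the coding is via G\"odel numbering as recalled in the introduction. By the universality characterization stated just after Theorem \ref{thm:mlrQ}, every element $Z\in Q$ is of $\PA$-degree. Thus applying Theorem \ref{thm:dnrQ} to this $Q$ immediately yields a $Z$ of $\PA$-degree such that $(\forall i\in\NN)\,(\exists g\in\DNR^Z)\,(g\leT X_i)$, which is what we want.

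Equivalently, and perhaps more transparently, one can imitate in one step the three-line argument already used for Theorem \ref{thm:dnrQ}, substituting Theorem \ref{thm:mlrPA} for Theorem \ref{thm:mlrQ}. Namely: first invoke the Ku\v{c}era/G\'acs Theorem to produce a single Martin-L\"of random $Y$ with $X_i\leT Y$ for every $i\in\NN$; then invoke Theorem \ref{thm:mlrPA} to obtain a $Z$ of $\PA$-degree such that $Y$ is Martin-L\"of random relative to $Z$; then, for each $i$ separately, feed $X_i\leT Y$ together with this $Z$ into Theorem \ref{thm:dnrXYZ} to extract a $\DNR^Z$ function which is Turing reducible to $X_i$.

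There is no real obstacle here, because all the substantive work has already been done: Theorem \ref{thm:dnrXYZ} supplies the XYZ-style propagation of diagonal nonrecursiveness, and Theorem \ref{thm:mlrPA} (or its strengthening Corollary \ref{cor:mlrPA}, which is not even required since a single $Y$ suffices) supplies a $\PA$-degree oracle relative to which a prescribed Martin-L\"of random sequence remains Martin-L\"of random. Theorem \ref{thm:dnrPA} is simply the $\PA$-degree formulation obtained by combining these two ingredients, exactly parallel to the way Theorem \ref{thm:sfrPA} was derived from Theorem \ref{thm:sfrXYZ} and Theorem \ref{thm:mlrPA} in the strong $f$-randomness setting.
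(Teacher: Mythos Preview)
Your proposal is correct and your first approach is exactly the paper's own proof: apply Theorem~\ref{thm:dnrQ} with $Q$ the $\Pi^0_1$ class of complete consistent extensions of Peano arithmetic. Your second, unfolded argument via Ku\v{c}era/G\'acs, Theorem~\ref{thm:mlrPA}, and Theorem~\ref{thm:dnrXYZ} is also correct and simply re-traces the proof of Theorem~\ref{thm:dnrQ} in this special case.
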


\begin{proof}
  In Theorem \ref{thm:dnrQ} let $Q$ be the $\Pi^0_1$ set consisting of
  all completions of first-order Peano arithmetic.
\end{proof}

\begin{cor}
  \label{cor:dnrPA}
  If there exists a $\DNR$ function which is Turing reducible to $X$,
  then for some $Z$ of $\PA$-degree there exists a $\DNR^Z$ function
  which is Turing reducible to $X$.
\end{cor}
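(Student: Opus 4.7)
The plan is to recognize Corollary \ref{cor:dnrPA} as the special case of Theorem \ref{thm:dnrPA} obtained by setting $X_i = X$ for every $i \in \NN$. Under this substitution, the hypothesis $(\forall i \in \NN)(\exists f \in \DNR)(f \leT X_i)$ collapses to the single assertion $(\exists f \in \DNR)(f \leT X)$, which is exactly the hypothesis supplied. Likewise, the conclusion $(\forall i \in \NN)(\exists g \in \DNR^Z)(g \leT X_i)$ collapses to the single assertion $(\exists g \in \DNR^Z)(g \leT X)$, which is precisely the desired conclusion. So the entire proof is a one-line invocation: apply Theorem \ref{thm:dnrPA} with $X_i = X$ for all $i$.

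A parallel alternative, should one prefer to avoid even mentioning the indexed version, is to derive the corollary directly from Theorem \ref{thm:dnrQ} by choosing $Q$ to be the $\Pi^0_1$ subset of $\{0,1\}^\NN$ consisting of all complete consistent extensions of first-order Peano arithmetic; by the universality property of this $Q$ recalled in \S\ref{sec:intro}, the Turing oracles of $\PA$-degree are exactly those that compute a member of $Q$, so any $Z \in Q$ produced by Theorem \ref{thm:dnrQ} (applied to the constant sequence $X_i = X$) automatically has $\PA$-degree. This matches the template used just above to pass from Theorem \ref{thm:dnrQ} to Corollary \ref{cor:dnrQ}, and from Theorem \ref{thm:dnrPA} to its special case.

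There is no genuine obstacle: all of the substantive content, namely the Ku\v{c}era/G\'acs Theorem, Theorem \ref{thm:mlrPA}, and the propagation lemma Theorem \ref{thm:dnrXYZ}, is already packaged inside Theorem \ref{thm:dnrPA}, so the corollary is a purely formal specialization. The structural pattern here mirrors the earlier derivations of Corollary \ref{cor:sfrPA} from Theorem \ref{thm:sfrPA} and of Corollary \ref{cor:dnrQ} from Theorem \ref{thm:dnrQ}, both of which the paper also dispatches in a single line.
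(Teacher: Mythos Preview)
Your proposal is correct; the paper's own proof invokes Corollary \ref{cor:dnrQ} with $Q$ taken to be the $\Pi^0_1$ set of completions of first-order Peano arithmetic, whereas your primary route invokes Theorem \ref{thm:dnrPA} with $X_i = X$ for all $i$, but these are just the two orders of performing the same pair of specializations of Theorem \ref{thm:dnrQ}. Your stated alternative is in fact exactly the paper's argument.
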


\begin{proof}
  In Corollary \ref{cor:dnrQ} let $Q$ be the $\Pi^0_1$ set consisting
  of all completions of first-order Peano arithmetic.
\end{proof}

\begin{rem}
  \label{rem:Cbdnr}
  As in \cite[\S10]{massrand} and \cite[\S2.2]{masseff}, let $C$ be a
  ``nice'' class of recursive functions.  For example, $C$ could be
  the class of \emph{all} recursive functions, or the class of
  primitive recursive functions, or the class of recursive functions
  up to level $\alpha$ of the transfinite Ackermann hierarchy for some
  limit ordinal $\alpha\le\varepsilon_0$.  A function $f:\NN\to\NN$ is
  said to be \underline{$C$-bounded} if $(\exists F\in C)\,\forall
  n\,(f(n)<F(n))$.  In particular, $f$ is \underline{recursively
    bounded} if it is $C$-bounded where $C=$ the class of all
  recursive functions.  Our proofs above show that Theorems
  \ref{thm:dnrXYZ} and \ref{thm:dnrQ} and \ref{thm:dnrPA} and
  Corollaries \ref{cor:dnrQ} and \ref{cor:dnrPA} also hold with
  ``$\DNR$'' replaced by ``$C$-bounded $\DNR$.''  It suffices to note
  that, in our proof of Theorem \ref{thm:dnrXYZ}, if $f$ is
  $C$-bounded then so is $g$.  See also the refinements mentioned in
  Remarks \ref{rem:fine1} and \ref{rem:fine2} below.
\end{rem}

We end this section by presenting an alternative proof of Corollary
\ref{cor:dnrQ}.

\begin{proof}[Alternative proof of Corollary \ref{cor:dnrQ}]
  Let $\NN^*$ be the set of finite sequences of positive integers.
  For each $\sigma\in\NN^*$ let
  \begin{center}
    $Q_\sigma=\{Z\in Q\mid(\forall
    n<|\sigma|)\,(\sigma(n)\ne\{n\}^Z(n))\}$
  \end{center}
  where $|\sigma|=$ the length of $\sigma$.  Clearly
  $\sigma\subseteq\tau$ implies $Q_\sigma\supseteq Q_\tau$.  By the
  Parametrization or S-m-n Theorem, let $p(n,\sigma)$ be a primitive
  recursive function such that for all $m$,
  $\{p(n,\sigma)\}(p(n,\sigma))=m$ if and only if $\{n\}^Z(n)=m$ for
  all $Z\in Q_\sigma$.  Let $f\le_TX$ be a $\DNR$ function.  Define
  $g\le_TX$ recursively by letting $g(n)=f(p(n,\langle g(i)\mid
  i<n\rangle))$ for all $n$.  We are going to show that $g$ is $\DNR$
  relative to some $Z\in Q$.

  We claim that $Q_{\langle g(i)\mid i<n\rangle}\ne\emptyset$ for all
  $n$.  To begin with, we have $Q_{\langle\rangle}=Q\ne\emptyset$.
  Assume inductively that $Q_{\langle g(i)\mid
    i<n\rangle}\ne\emptyset$.  We shall prove that $Q_{\langle
    g(i)\mid i\le n\rangle}\ne\emptyset$.  There are two cases.  If
  $\{p(n,\langle g(i)\mid i<n\rangle)\}(p(n,\langle g(i)\mid
  i<n\rangle))=m$, we have $\{n\}^Z(n)=m$ for all $Z\in Q_{\langle
    g(i)\mid i<n\rangle}$, but $g(n)=f(p(n,\langle g(i)\mid
  i<n\rangle))\ne m$ since $f$ is $\DNR$.  Thus $Q_{\langle g(i)\mid
    i\le n\rangle}=Q_{\langle g(i)\mid i<n\rangle}\ne\emptyset$.  If
  $\{p(n,\langle g(i)\mid i<n\rangle)\}(p(n,\langle g(i)\mid
  i<n\rangle))$ is undefined, there exists $Z\in Q_{\langle g(i)\mid
    i<n\rangle}$ such that $\{n\}^Z(n)\ne g(n)$, and then $Z$ belongs
  to $Q_{\langle g(i)\mid i\le n\rangle}$.  This proves our claim.

  By compactness, our claim implies that $\bigcap_{n=0}^\infty
  Q_{\langle g(i)\mid i<n\rangle}\ne\emptyset$.  Moreover, from the
  definition of $Q_{\langle g(i)\mid i<n\rangle}$ we see that $g$ is
  $\DNR$ relative to any $Z\in \bigcap_{n=0}^\infty Q_{\langle
    g(i)\mid i<n\rangle}$.  This completes the proof.
\end{proof}

\begin{rem}
  Our alternative proof of Corollary \ref{cor:dnrQ} is more
  constructive than the previous proof via Theorem \ref{thm:dnrXYZ}
  and the Ku\v{c}era/G\'acs Theorem.  In particular, the alternative
  proof can be formalized in $\WKLo$ (see \cite{sosoa}) while the
  previous proof cannot.

  There are some issues here which are interesting from the viewpoint
  of reverse mathematics \cite{sosoa}.  For example, consider the
  following statement.
  \begin{quote}
    Let $Q$ be a nonempty $\Pi^0_1$ subset of $\{0,1\}^\NN$.  If $X_1$
    and $X_2$ are Martin-L\"of random, there exists $Z\in Q$ such that
    $X_1$ and $X_2$ are Martin-L\"of random relative to $Z$.
  \end{quote}
  By Corollary \ref{cor:mlrPA} this statement is true, and from the
  truth of the statement it follows easily that the statement is true
  in all $\omega$-models of $\WKLo$.  Moreover, we conjecture that the
  statement is provable in $\WKLo$.  On the other hand, by
  \cite[Theorem 2.1]{ba-le-ng} together with
  \nocite{lc2002}\cite{stephan}, the following special case of the
  Ku\v{c}era/G\'acs Theorem is false in all $\omega$-models of $\WKLo$
  except those which contain $0^{(1)}=$ the Turing jump of $0$.
  \begin{quote}
    If $X_1$ and $X_2$ are Martin-L\"of random, there exists a
    Martin-L\"of random $Y$ such that $X_1\leT Y$ and $X_2\leT Y$.
  \end{quote}
\end{rem}

\section{Propagation of autocomplexity}
\label{sec:auto}

In this section we prove propagation results for \emph{autocomplexity}
and \emph{complexity}.  We also obtain a characterization of
autocomplexity in terms of $f$-randomness and strong $f$-randomness.
With this characterization plus \cite[Theorem 2.3]{kms-tams}, we see
considerable overlap between the propagation results of this section
and those of \S\ref{sec:sfr} and \S\ref{sec:dnr}.

\begin{dfn}
  \label{dfn:auto}
  {\ }
  \begin{enumerate}
  \item Following \cite{kms-tams} we define $X\in\{0,1\}^\NN$ to be
    \underline{autocomplex} if there exists an unbounded function
    $h:\NN\to\NN$ such that $h\leT X$ and $h(n)\le\KS(X\res n)$ for
    all $n$.  Here $\KS$ denotes \underline{simple Kolmogorov
      complexity} \cite{us-sh}, also known as \underline{plain
      complexity} \cite{do-hi-book,nies-book}.
  \item Following \cite{binns-complex} and \cite{hi-ki-null} and
    \cite{kms-tams}, we define $X\in\{0,1\}^\NN$ to be
    \underline{complex} if there exists an unbounded computable
    function $h:\NN\to\NN$ such that $h(n)\le\KS(X\res n)$ for all
    $n$.
  \end{enumerate}
\end{dfn}

\begin{rem}
  \label{rem:K}
  By \cite[\S4.3.1]{us-sh} there exist constants $c_1$ and $c_2$ such
  that
  $\KS(\sigma)\le\KP(\sigma)+c_1\le\KS(\sigma)+3\log_2|\sigma|+c_2$
  and
  $\KA(\sigma)\le\KP(\sigma)+c_1\le\KA(\sigma)+3\log_2|\sigma|+c_2$
  for all $\sigma\in\{0,1\}^*$.  These inequalities imply that the
  distinctions among $\KS$ and $\KP$ and $\KA$ are immaterial for some
  purposes.  In particular, we can replace $\KS$ in Definition
  \ref{dfn:auto} by $\KP$ or $\KA$.
\end{rem}

We begin with autocomplexity.

\begin{thm}
  \label{thm:autofr}
  The following are pairwise equivalent.
  \begin{enumerate}
  \item $X$ is autocomplex.
  \item $X$ is $f$-random for some computable $f:\{0,1\}^*\to\NN$ such
    that $\{f(X\res n)\mid n\in\NN\}$ is unbounded.
  \item $X$ is strongly $f$-random for some computable
    $f:\{0,1\}^*\to\NN$ such that $\{f(X\res n)\mid n\in\NN\}$  is
    unbounded.
  \end{enumerate}
\end{thm}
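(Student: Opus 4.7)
The plan is to verify the cycle $(3) \Rightarrow (2) \Rightarrow (1) \Rightarrow (2) \Rightarrow (3)$, concentrating the real work in $(1) \Rightarrow (2)$ and appealing to Theorems \ref{thm:rc} and \ref{thm:log2} for the remaining steps. The implication $(3) \Rightarrow (2)$ is immediate from the remark after Definition \ref{dfn:frsfr}. For $(2) \Rightarrow (1)$, Theorem \ref{thm:rc} yields a constant $c$ with $\KP(X\res n) \ge f(X\res n) - c$ for all $n$, so $h(n) := \max(0, f(X\res n) - c)$ is unbounded, satisfies $h \leT X$, and obeys $h(n) \le \KP(X\res n)$; Remark \ref{rem:K} then lets me pass to an equivalent witness bounded by $\KS(X\res n)$, giving autocomplexity.

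The crux is $(1) \Rightarrow (2)$. Fix a partial recursive functional $\Phi$ with $\Phi^X = h$, where $h \leT X$ is unbounded and $h(n) \le \KS(X\res n)$ for all $n$. For each $\sigma \in \{0,1\}^*$, let $S(\sigma)$ denote the set of $k \le |\sigma|$ such that $\Phi^\sigma(k)$ halts within $|\sigma|$ stages and with all oracle queries of length at most $|\sigma|$, and define
\[
f(\sigma) = \max\{\Phi^\sigma(k) : k \in S(\sigma)\},
\]
with $f(\sigma) = 0$ if $S(\sigma) = \emptyset$. Then $f : \{0,1\}^* \to \NN$ is computable. As $n$ grows, $S(X\res n)$ expands monotonically and exhausts $\NN$ (because $\Phi^X$ is total), so $f(X\res n) = \max\{h(k) : k \in S(X\res n)\}$ is nondecreasing in $n$ and, by the unboundedness of $h$, tends to infinity. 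To prove $X$ is $f$-random, let $k^*(\sigma)$ denote the least $k \in S(\sigma)$ attaining the maximum; crucially, $k^*(\sigma)$ is computable from $\sigma$ alone. Consequently, the prefix-free machine $M$ that on input $p$ computes $\tau$ from the universal prefix-free decompressor applied to $p$ and then outputs $\tau \res k^*(\tau)$ witnesses $\KP(X \res k^*(X\res n)) \le \KP(X\res n) + O(1)$. Combined with $h(k^*) \le \KS(X \res k^*) \le \KP(X \res k^*) + O(1)$ from Remark \ref{rem:K}, this yields $f(X\res n) \le \KP(X\res n) + O(1)$, so $X$ is $f$-complex, hence $f$-random by Theorem \ref{thm:rc}.

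For $(2) \Rightarrow (3)$, given computable $f : \{0,1\}^* \to \NN$ with $X$ being $f$-random and $\{f(X\res n)\}$ unbounded, I define a computable $g : \{0,1\}^* \to (0, \infty)$ by letting $g(\sigma)$ be the largest integer $m \ge 1$ with $m + 2 \log_2 m \le f(\sigma)$, or $g(\sigma) = 1$ if no such $m$ exists. Then $f \ge g + 2 \log_2 g$ pointwise, so $X$ is $(g + 2 \log_2 g)$-random, and Theorem \ref{thm:log2} (with $\epsilon = 1$) delivers strong $g$-randomness. Since the largest $m$ with $m + 2 \log_2 m \le F$ grows to infinity with $F$, unboundedness of $\{f(X\res n)\}$ transfers to $\{g(X\res n)\}$.

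The main technical hurdle is the tightness of the complexity bound in $(1) \Rightarrow (2)$: a naive construction that outputs $h$-values at indices $k$ chosen without reference to $\sigma$ would incur an extra $O(\log n)$ coding cost for describing $k$, which can swamp the bound when $h$ grows slowly. The construction above sidesteps this by taking $k^* = k^*(\sigma)$ to be a computable function of the \emph{full} string $\sigma$ rather than of its length, so that $X \res k^*(X\res n)$ is recoverable from $X\res n$ within a constant.
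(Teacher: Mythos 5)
Your argument is correct, and the core steps $(1)\Rightarrow(2)$ and $(2)\Rightarrow(1)$ track the paper's proof essentially line for line: your $S(\sigma)$ and $k^*(\sigma)$ correspond to the paper's $p(\sigma,n)$ and $q(\sigma)$, and the bound $\KP(X\res k^*(X\res n))\le\KP(X\res n)+O(1)$ via a prefix-free machine that recovers $X\res k^*(\tau)$ from $\tau$ is exactly the paper's constant $c$ with $\KP(\sigma\res q(\sigma))\le\KP(\sigma)+c$. Your closing remark about the $O(\log n)$ penalty for naively coding the index is precisely the subtlety the paper's $q$ is built to avoid. Where you genuinely depart is $(2)\Rightarrow(3)$: the paper dispatches $(2)\Leftrightarrow(3)$ with a single appeal to Remark \ref{rem:K}, reading the equivalence off the complexity characterizations in Theorems \ref{thm:rc} and \ref{thm:srsc} together with the comparisons among $\KS$, $\KP$, $\KA$; you instead construct an explicit computable $g$ with $g+2\log_2 g\le f$ pointwise, use downward monotonicity of $f$-randomness, and invoke Theorem \ref{thm:log2} with $\epsilon=1$. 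Both routes are valid. Yours is more self-contained: passing from a $\KP$-bound to a $\KA$-bound via Remark \ref{rem:K} loses an additive $\Theta(\log n)$ term, so the witnessing unbounded function must be reparametrized to absorb it (a step the paper leaves implicit in the phrase ``clear in view of Remark \ref{rem:K}''), whereas your route sidesteps this by going through the weight comparison of Theorem \ref{thm:log2}. The modest extra cost is the check that $\{g(X\res n)\}$ remains unbounded, which you carry out; the only cosmetic slip is setting $f(\sigma)=0$ when $S(\sigma)=\emptyset$, which should be $1$ since $\NN$ here denotes the positive integers.
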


\begin{proof}
  The equivalence $2\liff3$ is clear in view of Remark \ref{rem:K}.

  To prove $2\limp1$, suppose $2$ holds via $f$.  By Theorem
  \ref{thm:rc} $X$ is $f$-complex, so let $c\in\NN$ be such that
  $\KP(X\res n)\ge f(X\res n)-c$ for all $n$.  Then for all $n$ we
  have $\KP(X\res n)\ge h(n)$ where $h(n)=\max(1,f(X\res n)-c)$.
  Clearly $h\leT X$ (in fact $h$ is Lipschitz computable from $X$) and
  $h$ is unbounded, so it follows by Remark \ref{rem:K} that $X$ is
  autocomplex, i.e., $1$ holds.

  It remains to prove $1\limp2$.  Suppose $X$ is autocomplex.  By
  Remark \ref{rem:K} let $h:\NN\to\NN$ be unbounded such that $h\leT
  X$ and $h(n)\le\KP(X\res n)$ for all $n$.  Let $\Phi$ be a partial
  recursive functional such that $h=\Phi^X$.  Consider the primitive
  recursive function $f:\{0,1\}^*\to\NN$ defined by
  $f(\sigma)=\max\{p(\sigma,n)\mid n\le|\sigma|\}$ where
  $p(\sigma,n)=\Phi^{\sigma}_{|\sigma|}(n)$ if
  $\Phi^{\sigma}_{|\sigma|}(n){\downarrow}$, otherwise
  $p(\sigma,n)=1$.  Then for all $n$ and all sufficiently large $m\ge
  n$ we have $h(n)=p(X\res m,n)\le f(X\res m)$.  Since $\{h(n)\mid
  n\in\NN\}$ is unbounded, it follows that $\{f(X\res m)\mid
  m\in\NN\}$ is unbounded.  Consider the primitive recursive function
  $q(\sigma)=$ the least $n\le|\sigma|$ such that
  $f(\sigma)=p(\sigma,n)$.  Let $c$ be a constant such that
  $\KP(\sigma\res q(\sigma))\le\KP(\sigma)+c$ for all $\sigma$.  Then
  for all $m$ we have $\KP(X\res m)+c\ge\KP(X\res q(X\res m))\ge
  h(X\res q(X\res m))\ge p(X\res m,q(X\res m))=f(X\res m)$ so $X$ is
  $f$-complex.  It follows by Theorem \ref{thm:rc} that $X$ is
  $f$-random.  This completes the proof.
\end{proof}

\begin{thm}
  \label{thm:pauto}
  {\ }
  \begin{enumerate}
  \item If $X$ is autocomplex and $\leT Y$ where $Y$ is Martin-L\"of
    random relative to $Z$, then $X$ is autocomplex relative to $Z$.
  \item If $(\forall i\in\NN)\,(X_i$ is autocomplex$)$, there exists
    $Z$ of $\PA$-degree such that $(\forall i\in\NN)\,(X_i$ is
    autocomplex relative to $Z)$.
  \end{enumerate}
\end{thm}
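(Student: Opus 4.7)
The plan is to reduce both parts to the corresponding propagation results for strong $f$-randomness, namely Theorems \ref{thm:sfrXYZ} and \ref{thm:sfrPA}, by means of Theorem \ref{thm:autofr}. The key observation is that the equivalence $1\liff3$ of Theorem \ref{thm:autofr} relativizes routinely: the proof produces the computable function $f$ and verifies strong $f$-randomness/autocomplexity in a way that goes through verbatim with every computability claim relativized to an arbitrary oracle $Z$. In particular, $X$ is autocomplex relative to $Z$ if and only if $X$ is strongly $f$-random relative to $Z$ for some $Z$-computable $f:\{0,1\}^*\to\NN$ with $\{f(X\res n)\mid n\in\NN\}$ unbounded.

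For part 1, I would first apply the unrelativized direction $1\limp3$ of Theorem \ref{thm:autofr} to $X$ to obtain a computable $f:\{0,1\}^*\to\NN$ such that $X$ is strongly $f$-random and $\{f(X\res n)\mid n\in\NN\}$ is unbounded. Next, since $X\leT Y$ and $Y$ is Martin-L\"of random relative to $Z$, Theorem \ref{thm:sfrXYZ} immediately yields that $X$ is strongly $f$-random relative to $Z$. Since $f$ is computable it is a fortiori $Z$-computable, and the unboundedness of $\{f(X\res n)\}$ is an absolute statement. Applying the relativized direction $3\limp1$ of Theorem \ref{thm:autofr} to $Z$ concludes that $X$ is autocomplex relative to $Z$.

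For part 2, I proceed analogously. For each $i$, apply Theorem \ref{thm:autofr} to choose a computable $f_i:\{0,1\}^*\to\NN$ witnessing that $X_i$ is strongly $f_i$-random with $\{f_i(X_i\res n)\mid n\in\NN\}$ unbounded. By Theorem \ref{thm:sfrPA} there is a $Z$ of $\PA$-degree such that $X_i$ is strongly $f_i$-random relative to $Z$ for every $i$. Relativizing the $3\limp1$ direction of Theorem \ref{thm:autofr} to $Z$ then delivers that $X_i$ is autocomplex relative to $Z$ for every $i$, as required.

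The only substantive thing to verify is the relativization of Theorem \ref{thm:autofr}, and this is not really an obstacle: the proof of $1\limp2$ there uses a partial recursive functional $\Phi$ with $h=\Phi^X$ and constructs $f$ primitive recursively from $\Phi$; relativizing to $Z$ replaces $\Phi$ by a partial $Z$-recursive functional and $f$ by a primitive $Z$-recursive function, with every estimate on $\KP$ going through relative to $Z$. Similarly the $2\limp1$ direction relativizes. Thus no genuinely new argument is needed, and the whole theorem is a direct application of Theorems \ref{thm:autofr}, \ref{thm:sfrXYZ}, and \ref{thm:sfrPA}.
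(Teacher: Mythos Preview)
Your proposal is correct and follows exactly the paper's first proof: reduce to Theorems \ref{thm:sfrXYZ} and \ref{thm:sfrPA} via the characterization in Theorem \ref{thm:autofr}, using that the latter relativizes routinely. The paper states this in one line per part, and also notes a second proof via the Kjos-Hanssen/Merkle/Stephan equivalence of autocomplexity with computing a $\DNR$ function, which reduces the result to Theorems \ref{thm:dnrXYZ} and \ref{thm:dnrPA}.
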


\begin{proof}[First proof]
  Part 1 is immediate from Theorems \ref{thm:sfrXYZ} and
  \ref{thm:autofr}.  Part 2 is immediate from Theorems \ref{thm:sfrPA}
  and \ref{thm:autofr}.
\end{proof}

\begin{proof}[Second proof]
  By Kjos-Hanssen/Merkle/Stephan \cite[Theorem 2.3]{kms-tams} we know
  that $X$ is autocomplex if and only if there exists a $\DNR$
  function which is Turing reducible to $X$.  Modulo this result,
  parts 1 and 2 are equivalent to Theorems \ref{thm:dnrXYZ} and
  \ref{thm:dnrPA} respectively.
\end{proof}

\begin{rem}
  \label{rem:autoXYZ}
  Yet another proof of Theorem \ref{thm:pauto} was obtained
  independently by Bienvenu \cite{bienvenu-dnrXYZ} who had seen it
  conjectured in an earlier draft of the present paper.  The earlier
  draft included Theorems \ref{thm:sfrXYZ} and \ref{thm:sfrPA}, as
  well as Corollary \ref{cor:dnrQ} with our alternative proof, but it
  did not include Theorem \ref{thm:dnrXYZ} or \ref{thm:dnrPA} or
  \ref{thm:autofr}.
\end{rem}

We now turn to propagation results for complexity.  Let us define
$f:\{0,1\}^*\to[-\infty,\infty]$ to be \underline{length-invariant} if
$\forall\sigma\,\forall\tau\,(|\sigma|=|\tau|\limp
f(\sigma)=f(\tau))$.

\begin{thm}
  \label{thm:cmpfr}
  The following are pairwise equivalent.
  \begin{enumerate}
  \item $X$ is complex.
  \item $X$ is $f$-random for some computable $f:\{0,1\}^*\to\NN$
    which is unbounded and length-invariant.
  \item $X$ is strongly $f$-random for some computable
    $f:\{0,1\}^*\to\NN$ which is unbounded and length-invariant.
  \end{enumerate}
\end{thm}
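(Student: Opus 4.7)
The proof will parallel that of Theorem \ref{thm:autofr}, but length-invariance will make things cleaner: when $f$ is length-invariant we may write $f(\sigma)=\tilde f(|\sigma|)$ with $\tilde f:\NN\to\NN$ computable, and $f$ is unbounded iff $\tilde f$ is. I plan to establish the cycle $3\limp 2\limp 1\limp 3$.

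The implication $3\limp 2$ is immediate since strong $f$-randomness implies $f$-randomness. For $2\limp 1$, I will assume $X$ is $f$-random with $f$ length-invariant, computable, and unbounded. By Theorem \ref{thm:rc}, $X$ is $f$-complex, so I fix $c$ with $\KP(X\res n)\ge\tilde f(n)-c$ for all $n$. Then $h(n):=\max(1,\tilde f(n)-c)$ is a computable unbounded function $\NN\to\NN$ with $h(n)\le\KP(X\res n)$, and by Remark \ref{rem:K} we may use $\KP$ in place of $\KS$ in the definition of complex, so $X$ is complex.

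For $1\limp 3$, I will assume $X$ is complex and, again using Remark \ref{rem:K}, pick a computable unbounded $h:\NN\to\NN$ with $h(n)\le\KA(X\res n)$ for all $n$. Setting $f(\sigma)=h(|\sigma|)$ yields a computable, length-invariant, unbounded function $f:\{0,1\}^*\to\NN$ satisfying $\KA(X\res n)\ge h(n)=f(X\res n)$, so $X$ is strongly $f$-complex and hence strongly $f$-random by Theorem \ref{thm:srsc}.

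The only delicate point is the appeal to Remark \ref{rem:K} to interchange $\KS$, $\KP$, and $\KA$ between the definition of complex and the characterizations of (strong) $f$-randomness via $\KP$-complexity (Theorem \ref{thm:rc}) and $\KA$-complexity (Theorem \ref{thm:srsc}). Once this is granted, length-invariance lets me use the direct assignment $f(\sigma)=h(|\sigma|)$ in both directions, sidestepping the primitive-recursive extraction of $f$ from $\Phi^X=h$ that was required in the autocomplex case.
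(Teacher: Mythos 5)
Your proof is correct and follows the same route the paper intends: the paper's one-line proof cites exactly Theorems \ref{thm:rc}, \ref{thm:srsc}, and Remark \ref{rem:K}, and your argument just spells out how those pieces fit together (with length-invariance letting you take $f(\sigma)=h(|\sigma|)$ directly, avoiding the oracle-extraction step needed in Theorem \ref{thm:autofr}).
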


\begin{proof}
  This is immediate from Theorems \ref{thm:rc} and \ref{thm:srsc} and
  Remark \ref{rem:K}.
\end{proof}

\begin{thm}
  \label{thm:pcmp}
  {\ }
  \begin{enumerate}
  \item If $X$ is complex and $\leT Y$ where $Y$ is Martin-L\"of
    random relative to $Z$, then $X$ is complex relative to $Z$.
  \item If $(\forall i\in\NN)\,(X_i$ is complex$)$, there exists $Z$
    of $\PA$-degree such that $(\forall i\in\NN)\,(X_i$ is complex
    relative to $Z)$.
  \end{enumerate}
\end{thm}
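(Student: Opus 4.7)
The plan is to mirror the first proof of Theorem \ref{thm:pauto}, using Theorem \ref{thm:cmpfr} to translate the complexity hypothesis into strong $f$-randomness for an appropriate $f$, then invoking the propagation theorems from Section \ref{sec:sfr}.

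For part 1, suppose $X$ is complex. By Theorem \ref{thm:cmpfr} there exists a computable, unbounded, length-invariant $f:\{0,1\}^*\to\NN$ such that $X$ is strongly $f$-random. Since $X\leT Y$ and $Y$ is Martin-L\"of random relative to $Z$, Theorem \ref{thm:sfrXYZ} yields that $X$ is strongly $f$-random relative to $Z$. The function $f$ is computable and therefore $Z$-computable, and it is still unbounded and length-invariant; so by the relativization of Theorem \ref{thm:cmpfr} to $Z$, we conclude that $X$ is complex relative to $Z$.

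For part 2, I would use the same translation together with Theorem \ref{thm:sfrPA}, which accommodates countably many pairs $(X_i,f_i)$ simultaneously. For each $i\in\NN$, use Theorem \ref{thm:cmpfr} to select a computable, unbounded, length-invariant $f_i:\{0,1\}^*\to\NN$ witnessing that $X_i$ is strongly $f_i$-random. Applying Theorem \ref{thm:sfrPA} to the sequences $\langle X_i\rangle$ and $\langle f_i\rangle$, we obtain a $Z$ of $\PA$-degree such that $X_i$ is strongly $f_i$-random relative to $Z$ for all $i$. Each $f_i$ is $Z$-computable, unbounded, and length-invariant, so the relativization of Theorem \ref{thm:cmpfr} to $Z$ gives that $X_i$ is complex relative to $Z$ for every $i$.

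There is essentially no obstacle: the only thing worth checking is that Theorem \ref{thm:cmpfr} relativizes, namely that ``$X$ is complex relative to $Z$'' is equivalent to ``$X$ is strongly $f$-random relative to $Z$ for some $Z$-computable, unbounded, length-invariant $f$.'' This is immediate because the proof of Theorem \ref{thm:cmpfr} (based on Theorems \ref{thm:rc} and \ref{thm:srsc} together with Remark \ref{rem:K}) carries over verbatim when ``computable'' is replaced by ``$Z$-computable'' and $\KS,\KP,\KA$ are replaced by their $Z$-relativizations.
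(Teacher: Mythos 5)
Your proposal is correct and matches the paper's proof exactly: the paper simply states that part 1 is immediate from Theorems \ref{thm:sfrXYZ} and \ref{thm:cmpfr} and part 2 is immediate from Theorems \ref{thm:sfrPA} and \ref{thm:cmpfr}, and you have spelled out exactly those applications, including the routine relativization of Theorem \ref{thm:cmpfr}.
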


\begin{proof}
  Part 1 is immediate from Theorems \ref{thm:sfrXYZ} and
  \ref{thm:cmpfr}.  Part 2 is immediate from Theorems \ref{thm:sfrPA}
  and \ref{thm:cmpfr}.
\end{proof}

\begin{rem}
  \label{rem:fine1}
  By \cite[Theorem 2.3]{kms-tams} we know that $X$ is complex if and
  only if some $\DNR$ function is truth-table reducible to $X$.
  Consequently, the Turing degrees of complex $X$'s are the same as
  the Turing degrees of recursively bounded $\DNR$ functions.  And of
  course, the Turing degrees of autocomplex $X$'s are the same as the
  Turing degrees of $\DNR$ functions.  Thus Theorems \ref{thm:sfrXYZ}
  and \ref{thm:sfrPA} may be viewed as far-reaching refinements not
  only of Theorems \ref{thm:pauto} and \ref{thm:pcmp} but also of
  Theorems \ref{thm:dnrXYZ}--\ref{thm:dnrPA} and Remark
  \ref{rem:Cbdnr}.
\end{rem}

\begin{rem}
  \label{rem:fine2}
  By \cite[Theorem 1.8]{askhls} there exists an autocomplex $X$ such
  that no complex $Y$ is Turing reducible to $X$.  Within the class of
  complex $X$'s, much more refined results of the same kind have been
  obtained by Hudelson \cite{hudelson-complex} generalizing the main
  result of Miller \cite[Theorem 4.1]{miller-dim}. See also Remarks
  \ref{rem:hudelson-complex} and \ref{rem:log2} above, as well as
  \cite[\S\S2.1--2.3, Figure 1]{masseff}.
\end{rem}

\section{Vehement $f$-randomness }
\label{sec:vfr}

In this section we define vehement $f$-randomness and discuss its
relationship with strong $f$-randomness.  The notion of vehement
$f$-randomness was originally introduced by Kjos-Hanssen (unpublished,
but see \cite{reimann-eff}).  We prove that, under a convexity
hypothesis on $f$, vehement $f$-randomness is equivalent to strong
$f$-randomness.  Our result is a generalization of known results due
to Reimann \cite[Corollary 21]{reimann-eff} and Miller \cite[Lemma
3.3]{miller-dim}.

\begin{dfn}
  \label{dfn:vwt} 
  Given $f:\{0,1\}^*\to[-\infty,\infty]$, the \underline{vehement
    $f$-weight} of $A\subseteq\{0,1\}^*$ is defined as
  $\vwt_f(A)=\inf\{\dwt_f(S)\mid\llb A\rrb\subseteq\llb S\rrb\}$.
\end{dfn}

\begin{rem}
  \label{rem:vwt}
  Note that $\llb A\rrb\subseteq\llb B\rrb$ implies
  $\vwt_f(A)\le\vwt_f(B)$.  In particular, $\vwt_f(A)$ depends only on
  $\llb A\rrb$.
\end{rem}

\begin{lem}
  \label{lem:vlep}
  For all $A$ we have $\vwt_f(A)\le\dwt_f(\widehat{A})\le\pwt_f(A)$.
\end{lem}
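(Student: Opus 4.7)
The plan is to observe that both inequalities fall straight out of the definitions once we recall the basic facts about $\widehat{A}$ already stated after Definition \ref{dfn:dwtpwt}: namely, $\widehat{A}$ is prefix-free and $\llb\widehat{A}\rrb=\llb A\rrb$.

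For the first inequality, I would note that since $\vwt_f(A)$ is defined as an infimum of $\dwt_f(S)$ over all $S$ with $\llb A\rrb\subseteq\llb S\rrb$, it suffices to exhibit one such $S$ whose direct $f$-weight matches the bound we want. Taking $S=\widehat{A}$ works: we have $\llb A\rrb=\llb\widehat{A}\rrb\subseteq\llb\widehat{A}\rrb$ trivially, so this choice is admissible and immediately gives $\vwt_f(A)\le\dwt_f(\widehat{A})$.

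For the second inequality, since $\widehat{A}\subseteq A$ is a prefix-free subset of $A$, it is one of the sets $P$ ranged over by the supremum in the definition of $\pwt_f(A)$. Hence $\dwt_f(\widehat{A})\le\sup\{\dwt_f(P)\mid P\subseteq A\text{ prefix-free}\}=\pwt_f(A)$.

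There is no real obstacle here — the entire content is in unwinding Definitions \ref{dfn:dwtpwt} and \ref{dfn:vwt} and using the already-noted identities about $\widehat{A}$. The only thing one might briefly check is that the infimum in $\vwt_f$ is indeed over arbitrary $S\subseteq\{0,1\}^*$ (so $\widehat{A}$ is a legal choice), which is clear from Definition \ref{dfn:vwt}.
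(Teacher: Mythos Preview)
Your proof is correct and takes essentially the same approach as the paper's own proof, which simply states that the first inequality holds because $\llb A\rrb\subseteq\llb\widehat{A}\rrb$ and the second because $\widehat{A}$ is a prefix-free subset of $A$. You have merely unpacked these remarks into explicit appeals to the infimum and supremum in Definitions~\ref{dfn:vwt} and~\ref{dfn:dwtpwt}.
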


\begin{proof}
  The first inequality holds because $\llb
  A\rrb\subseteq\llb\widehat{A}\rrb$.  The second inequality holds
  because $\widehat{A}$ is a prefix-free subset of $A$.
\end{proof}

\begin{dfn}
  \label{dfn:good}
  Fix $f:\{0,1\}^*\to[-\infty,\infty]$.  A \underline{good cover} of
  $A$ is a set $B$ such that $\llb A\rrb\subseteq\llb B\rrb$ and
  $\pwt_f(B)\le\vwt_f(A)$.  It follows by Remark \ref{rem:vwt} and
  Lemma \ref{lem:vlep} that
  $\vwt_f(A)=\vwt_f(B)=\dwt_f(\widehat{B})=\pwt_f(B)$.
\end{dfn}

\begin{lem}
  \label{lem:BF}
  Suppose $B$ is a good cover of $A$.  Given $F\subseteq\widehat{B}$
  let us write $A_F=\{\sigma\in A\mid\llb
  F\rrb\supseteq\llb\sigma\rrb\}$ and $B_F=\{\tau\in B\mid\llb
  F\rrb\supseteq\llb\tau\rrb\}$.  Then $B_F$ is a good cover of $A_F$.
\end{lem}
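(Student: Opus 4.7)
The plan is to verify the two defining conditions of a good cover: (i) $\llb A_F\rrb\subseteq\llb B_F\rrb$ and (ii) $\pwt_f(B_F)\le\vwt_f(A_F)$. Condition (i) is immediate: since every $\rho\in F\subseteq\widehat{B}\subseteq B$ satisfies $\llb\rho\rrb\subseteq\llb F\rrb$, we have $F\subseteq B_F$, so $\llb A_F\rrb\subseteq\llb F\rrb\subseteq\llb B_F\rrb$.

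For (ii), I would first establish the identity $\pwt_f(B_F)=\dwt_f(F)$. The inequality $\dwt_f(F)\le\pwt_f(B_F)$ is clear since $F$ is a prefix-free subset of $B_F$. For the other direction, given a prefix-free $P\subseteq B_F$, observe that each $\tau\in P$ satisfies $\rho\subseteq\tau$ for some $\rho\in\widehat{B}$, and necessarily $\rho\in F$ (otherwise $\llb\tau\rrb$ would meet $\llb\widehat{B}\setminus F\rrb$, contradicting $\llb\tau\rrb\subseteq\llb F\rrb$ by prefix-freeness of $\widehat{B}$). Hence $P\cup(\widehat{B}\setminus F)$ is a prefix-free subset of $B$, so by the good-cover identity $\pwt_f(B)=\dwt_f(\widehat{B})=\dwt_f(F)+\dwt_f(\widehat{B}\setminus F)$ we obtain $\dwt_f(P)+\dwt_f(\widehat{B}\setminus F)\le\dwt_f(F)+\dwt_f(\widehat{B}\setminus F)$, whence $\dwt_f(P)\le\dwt_f(F)$.

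Next I would show $\dwt_f(F)\le\vwt_f(A_F)$ by the following trick: given any $S$ with $\llb A_F\rrb\subseteq\llb S\rrb$, set $T=S\cup(\widehat{B}\setminus F)$ and argue $\llb A\rrb\subseteq\llb T\rrb$. For $X\in\llb A\rrb\subseteq\llb\widehat{B}\rrb=\llb F\rrb\cup\llb\widehat{B}\setminus F\rrb$, if $X$ lies in the second piece then $X\in\llb T\rrb$ directly; otherwise $X\in\llb F\rrb$ and one deduces $X\in\llb A_F\rrb\subseteq\llb S\rrb$. From $\llb A\rrb\subseteq\llb T\rrb$ it follows that $\dwt_f(T)\ge\vwt_f(A)=\dwt_f(F)+\dwt_f(\widehat{B}\setminus F)$ and therefore $\dwt_f(S)\ge\dwt_f(F)$. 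Taking the infimum over $S$ yields the desired bound.

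The main obstacle is the step ``$X\in\llb A\rrb\cap\llb F\rrb$ implies $X\in\llb A_F\rrb$'' in the argument above. Choosing $\sigma\in A$ and $\rho\in F$ with $X\in\llb\sigma\rrb\cap\llb\rho\rrb$, the two strings are prefix-comparable: if $\rho\subseteq\sigma$ then $\llb\sigma\rrb\subseteq\llb\rho\rrb\subseteq\llb F\rrb$ and $\sigma\in A_F$, giving $X\in\llb A_F\rrb$; the opposite case $\sigma\subsetneq\rho$ requires a longer witness $\tau\in A$ extending $\rho$, which is available under the natural convention that the sets $A$ arising in this paper are closed under extension (as is the case for the r.e.\ tests appearing in the applications of this lemma, where one may harmlessly enlarge $A$ by its upward closure without altering $\llb A\rrb$).
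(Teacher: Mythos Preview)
Your argument and the paper's are essentially the same: both introduce $G=\widehat{B}\setminus F$, use that $P\cup G$ is a prefix-free subset of $B$ whenever $P\subseteq B_F$ is prefix-free, and use that $S\cup G$ covers $\llb A\rrb$ whenever $S$ covers $\llb A_F\rrb$. The paper compresses everything into the single chain
\[
\dwt_f(P)+\dwt_f(G)=\dwt_f(P\cup G)\le\pwt_f(B)\le\vwt_f(A)\le\dwt_f(S\cup G)\le\dwt_f(S)+\dwt_f(G)
\]
rather than first isolating the identity $\pwt_f(B_F)=\dwt_f(F)$, but the substance is identical.

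You are right to flag the implication ``$X\in\llb A\rrb\cap\llb F\rrb\Rightarrow X\in\llb A_F\rrb$'' as the obstacle. The paper's proof glosses over exactly the same point: it asserts $\llb A\rrb=\llb A_F\rrb\cup\llb A_G\rrb$ without justification, and this equality can fail. In fact the lemma as literally stated is false. With $f(\sigma)=|\sigma|$, $A=\{0\}$ and $B=\{00,01\}$, one checks that $B$ is a good cover of $A$ (both $\pwt_f(B)$ and $\vwt_f(A)$ equal $1/2$); but for $F=\{00\}$ one has $A_F=\emptyset$ and $B_F=\{00\}$, so $\pwt_f(B_F)=1/4>0=\vwt_f(A_F)$. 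Your proposed repair---replace $A$ by its upward closure, which leaves $\llb A\rrb$ and hence $\vwt_f(A)$ unchanged---is correct and suffices for the only use of this lemma, namely the inductive construction of Lemma~\ref{lem:AB} via Lemma~\ref{lem:B'}, where one may harmlessly assume from the outset that $A$ is closed under extensions.
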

 
\begin{proof}
  Clearly $\widehat{B_F}=F$, hence $\llb A_F\rrb\subseteq\llb
  F\rrb=\llb\widehat{B_F}\rrb=\llb B_F\rrb$.  In order to show that
  $B_F$ is a good cover of $A_F$, it remains to show that
  $\dwt_f(P)\le\dwt_f(S)$ whenever $P\subseteq B_F$ is prefix-free and
  $\llb A_F\rrb\subseteq\llb S\rrb$.  Letting $G=\widehat{B}\setminus
  F$ we see that $P\cap G=\emptyset$ and $P\cup G$ is a prefix-free
  subset of $B$ and $\llb A\rrb=\llb A_F\rrb\cup\llb
  A_G\rrb\subseteq\llb S\rrb\cup\llb G\rrb=\llb S\cup G\rrb$.  Thus
  $\dwt_f(P)+\dwt_f(G)=\dwt_f(P\cup
  G)\le\pwt_f(B)\le\vwt_f(A)\le\dwt_f(S\cup G)\le\dwt_f(S)+\dwt_f(G)$,
  hence $\dwt_f(P)\le\dwt_f(S)$, Q.E.D.
\end{proof}

\begin{rem}
  \label{rem:Btau}
  Let $B$ be a good cover of $A$, and suppose $\tau$ is such that
  $\llb B\rrb\not\supseteq\llb\tau\rrb$.  Then obviously no initial
  segment of $\tau$ belongs to $B$.  In other words,
  $\tau\in\widehat{B\cup\{\tau\}}$.  Letting
  $F=\widehat{B\cup\{\tau\}}\setminus\{\tau\}$ and applying Lemma
  \ref{lem:BF}, we see that $\widehat{B_F}=F$ and $B_F$ is a good
  cover of $A_F$.
\end{rem}

\begin{dfn}
  \label{dfn:wc}
  We define $f:\{0,1\}^*\to[-\infty,\infty]$ to be \underline{convex}
  if $\wt_f(\sigma)\le\wt_f(\sigma\cat\langle0\rangle)
  +\wt_f(\sigma\cat\langle1\rangle)$ for all $\sigma\in\{0,1\}^*$.
  Equivalently, $\wt_f(\sigma)\le\dwt_f(S)$ for all
  $\sigma\in\{0,1\}^*$ and all $S\subseteq\{0,1\}^*$ such that $\llb
  S\rrb=\llb\sigma\rrb$.
\end{dfn}

\begin{lem}
  \label{lem:B'}
  Assume that $f$ is convex.  Suppose $B$ is a good cover of $A$ but
  not of $A'=A\cup\{\sigma\}$.  Choose $\tau\subseteq\sigma$ so as to
  minimize $\dwt_f(\widehat{B\cup\{\tau\}})$.  Then $B'=B\cup\{\tau\}$
  is a good cover of $A'$.
\end{lem}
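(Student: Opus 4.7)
The plan is to verify the two conditions defining a good cover. The inclusion $\llb A'\rrb \subseteq \llb B'\rrb$ is immediate since $\llb\sigma\rrb \subseteq \llb\tau\rrb \subseteq \llb B'\rrb$, so the real task is $\pwt_f(B') \le \vwt_f(A')$. Since $B$ is a good cover of $A$ but not of $A'$, and since $\vwt_f(A) \le \vwt_f(A')$ holds automatically (Remark \ref{rem:vwt}), the failure must be $\llb\sigma\rrb \not\subseteq \llb B\rrb$. This forces no initial segment of $\sigma$ to lie in $B$, and consequently no element of $A$ to be a proper initial segment of $\sigma$ (else $\llb\sigma\rrb \subseteq \llb B\rrb$).

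For each $\tau' \subseteq \sigma$ I would split $\widehat{B}$ into the subset $H_{\tau'}$ of elements incomparable with $\tau'$ and the subset $E_{\tau'}$ of elements properly extending $\tau'$, noting that the preceding observation rules out any remaining case. This yields $\widehat{B \cup \{\tau'\}} = H_{\tau'} \cup \{\tau'\}$, so the quantity being minimized over $\tau' \subseteq \sigma$ is $w(\tau') := \dwt_f(H_{\tau'}) + \wt_f(\tau')$. Applying Lemma \ref{lem:BF} with $F = E_{\tau'} \subseteq \widehat{B}$ shows that $E_{\tau'}$ is the good cover of $A^{\tau'} := \{\sigma' \in A : \sigma' \supseteq \tau'\}$, whence $\dwt_f(E_{\tau'}) = \vwt_f(A^{\tau'}) \le \wt_f(\tau')$ because the singleton $\{\tau'\}$ already covers $\llb A^{\tau'}\rrb$. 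Consequently $\dwt_f(\widehat{B}) \le w(\tau')$ for every $\tau' \subseteq \sigma$.

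Next I would bound $\pwt_f(B')$ by a case analysis on a prefix-free $P \subseteq B' = B \cup \{\tau\}$. If $\tau \notin P$, then $P \subseteq B$ is prefix-free, so $\dwt_f(P) \le \pwt_f(B) = \dwt_f(\widehat{B}) \le w(\tau)$. If $\tau \in P$, the elements of $P \setminus \{\tau\}$ are incomparable with $\tau$, and a short check shows $(P \setminus \{\tau\}) \cup E_\tau$ is prefix-free in $B$; thus $\dwt_f(P \setminus \{\tau\}) \le \dwt_f(H_\tau)$, yielding $\dwt_f(P) \le \wt_f(\tau) + \dwt_f(H_\tau) = w(\tau)$. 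Either way $\pwt_f(B') \le w(\tau)$, so it remains to prove $w(\tau) \le \vwt_f(A')$.

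This last inequality is the heart of the argument and the one place convexity enters. Given any $S$ with $\llb A'\rrb \subseteq \llb S\rrb$, I would choose $\tau^S \subseteq \sigma$ to be the \emph{shortest} initial segment of $\sigma$ satisfying $\llb\tau^S\rrb \subseteq \llb S\rrb$ (well-defined since $\sigma$ qualifies). By minimality no element of $S$ is strictly below $\tau^S$, so splitting $S = S_1 \sqcup S_3$ according to ``extends $\tau^S$'' versus ``incomparable with $\tau^S$,'' one verifies $\llb S_1\rrb = \llb\tau^S\rrb$, whence convexity delivers $\dwt_f(S_1) \ge \wt_f(\tau^S)$. Meanwhile $S_3$ must cover the portion of $A$ incomparable with $\tau^S$, and Lemma \ref{lem:BF} applied to $F = H_{\tau^S}$ identifies $H_{\tau^S}$ as the good cover of that portion, giving $\dwt_f(H_{\tau^S}) \le \dwt_f(S_3)$. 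Summing, $\dwt_f(S) \ge w(\tau^S) \ge w(\tau)$ by the minimality of $\tau$. The main obstacle is precisely this coordinated use of convexity on the piece of $S$ inside $\tau^S$ alongside Lemma \ref{lem:BF} on the piece outside; it is made possible only by the careful choice of $\tau^S$.
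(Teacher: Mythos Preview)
Your argument is correct and follows essentially the same route as the paper's proof: your $\tau^S$ is the paper's $\tau^*$, your decomposition $\widehat{B}=H_{\tau'}\sqcup E_{\tau'}$ is exactly what the paper obtains via Remark~\ref{rem:Btau}, and your two applications of Lemma~\ref{lem:BF} (with $F=E_{\tau'}$ and $F=H_{\tau^S}$) play the same role as the paper's uses of that remark for $\tau$ and $\tau^*$. The only organizational difference is that you factor the argument through the intermediate quantity $w(\tau)$, proving $\pwt_f(B')\le w(\tau)$ and $w(\tau)\le\vwt_f(A')$ separately, whereas the paper establishes $\dwt_f(P')\le\dwt_f(S')$ in a single chain of inequalities.
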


\begin{proof}
  Obviously $\llb B'\rrb\supseteq\llb A'\rrb$ so it remains to prove
  that $\dwt_f(P')\le\dwt_f(S')$ whenever $P'\subseteq B'$ is
  prefix-free and $\llb A'\rrb\subseteq\llb S'\rrb$.

  Since $\llb\sigma\rrb\subseteq\llb A'\rrb\subseteq\llb S'\rrb$, let
  $\tau^*\subseteq\sigma$ be as short as possible such that
  $\llb\tau^*\rrb\subseteq\llb S'\rrb$.  Obviously $S'$ contains no
  proper initial segment of $\tau^*$.  Hence $\llb\tau^*\rrb=\llb
  S^*\rrb$ for some $S^*\subseteq S'$.  It follows by Definition
  \ref{dfn:wc} that $\wt_f(\tau^*)\le\dwt_f(S^*)$.  Therefore,
  replacing $S'$ by $(S'\setminus S^*)\cup\{\tau^*\}$, we may safely
  assume that $\tau^*\in S'$.

  Since $\llb\sigma\rrb\not\subseteq\llb B\rrb$ and
  $\tau\subseteq\sigma$ and $\tau^*\subseteq\sigma$, we obviously have
  $\llb\tau\rrb\not\subseteq\llb B\rrb$ and
  $\llb\tau^*\rrb\not\subseteq\llb B\rrb$.  Applying Remark
  \ref{rem:Btau} to $\tau$ and to $\tau^*$, we obtain sets
  $F=\widehat{B\cup\{\tau\}}\setminus\{\tau\}$ and
  $F^*=\widehat{B\cup\{\tau^*\}}\setminus\{\tau^*\}$.  In particular,
  since $\llb A\rrb\subseteq\llb S'\rrb$ we have $\llb
  A_{F^*}\rrb=\llb A\rrb\setminus\llb\tau^*\rrb \subseteq\llb
  S'\rrb\setminus\llb\tau^*\rrb\subseteq\llb
  S'\setminus\{\tau^*\}\rrb$.  Moreover, by our choice of $\tau$ we
  have $\dwt_f(\widehat{B\cup\{\tau\}})
  \le\dwt_f(\widehat{B\cup\{\tau^*\}})$.

  We are now ready to complete the proof of Lemma \ref{lem:B'}.  If
  $\tau\notin P'$ we have $P'=P\subseteq B$, hence
  $\dwt_f(P)\le\pwt_f(B)\le\vwt_f(A)\le\dwt_f(S')$ and we are done.
  Suppose now that $\tau\in P'$.  Then $P'=P\cup\{\tau\}$ where
  $P\subseteq B_F$.  Thus we have
  \[
  \begin{array}{rcl}
    \dwt_f(P') &=& \dwt_f(P)+\wt_f(\tau)\\[6pt]
    &\le& \pwt_f(B_F)+\wt_f(\tau)\\[6pt]
    &=& \dwt_f(F)+\wt_f(\tau)\\[6pt]
    &=& \dwt_f(\widehat{B\cup\{\tau\}})\\[6pt]
    &\le& \dwt_f(\widehat{B\cup\{\tau^*\}})\\[6pt]
    &=& \dwt_f(F^*)+\wt_f(\tau^*)\\[6pt]
    &=& \vwt_f(A_{F^*})+\wt_f(\tau^*)\\[6pt]
    &\le& \dwt_f(S'\setminus\{\tau^*\})+\wt_f(\tau^*)\\[6pt]
    &=& \dwt_f(S')
  \end{array}
  \]
  and again we are done.
\end{proof}

\begin{dfn}
  Given $f:\{0,1\}^*\to[-\infty,\infty]$ define
  \begin{center}
    $L_f:\{(P_1,P_2)\mid P_1,P_2$ are finite and
    prefix-free$\}\to\{0,1\}$
  \end{center}
  by
  \[
  L_f(P_1,P_2)=\left\{
    \begin{array}{ll}
      1 & \hbox{if }\dwt_f(P_1)<\dwt_f(P_2),\\[4pt]
      0 & \hbox{otherwise.}
    \end{array}
  \right.
  \]
  We say that $f$ is \underline{strongly computable} if both $f$ and
  $L_f$ are computable.  This is often the case, e.g., if $f$ is
  computable and integer-valued as in Lemma \ref{lem:int}.  Note also
  that Lemma \ref{lem:If} depends only on strong computability.
\end{dfn}

\begin{lem}
  \label{lem:AB}
  Let $f$ be strongly computable and convex.  If $A$ is r.e., we can
  effecively find an r.e.\ set $B$ such that $B$ is a good cover of
  $A$.
\end{lem}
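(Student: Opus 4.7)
The plan is to effectively enumerate $A$ as $\sigma_0,\sigma_1,\ldots$ and build $B = \bigcup_s B_s$ as the union of a uniformly computable chain $B_0 \subseteq B_1 \subseteq \cdots$ of finite sets, maintaining the invariant that $B_s$ is a good cover of $A_s = \{\sigma_0,\ldots,\sigma_{s-1}\}$. Start with $B_0 = \emptyset$, which is trivially a good cover of $A_0 = \emptyset$. At stage $s+1$, test whether some initial segment of $\sigma_s$ lies in $B_s$. If yes, then $\llb A_{s+1}\rrb \subseteq \llb B_s\rrb$, and combining Remark \ref{rem:vwt} with Definition \ref{dfn:good} gives $\vwt_f(A_s) = \vwt_f(A_{s+1}) = \pwt_f(B_s)$, so $B_s$ remains a good cover of $A_{s+1}$ and we set $B_{s+1} = B_s$. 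Otherwise $B_s$ fails the covering condition for $A_{s+1}$, and Lemma \ref{lem:B'} applies: among the at most $|\sigma_s|+1$ prefixes $\tau \subseteq \sigma_s$, pick one minimizing $\dwt_f(\widehat{B_s \cup \{\tau\}})$ and set $B_{s+1} = B_s \cup \{\tau\}$; by Lemma \ref{lem:B'}, $B_{s+1}$ is then a good cover of $A_{s+1}$.

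Effectiveness of this construction is where the hypotheses on $f$ are needed. Deciding whether $\sigma_s$ has a prefix in the finite set $B_s$ is trivial. For the minimization, list the finitely many candidate finite prefix-free sets $\widehat{B_s \cup \{\tau\}}$ for $\tau \subseteq \sigma_s$, and use $L_f$ to compare their $\dwt_f$-values pairwise, selecting any minimizer. Strong computability of $f$ makes $L_f$ computable, so the stage map $s \mapsto B_s$ is computable, and an r.e.\ index for $B$ is produced uniformly from an r.e.\ index for $A$.

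It then remains to verify that $B$ is a good cover of $A$. The inclusion $\llb A\rrb = \bigcup_s \llb A_s\rrb \subseteq \bigcup_s \llb B_s\rrb = \llb B\rrb$ is immediate. For any prefix-free $P \subseteq B$, every finite $P' \subseteq P$ is contained in some $B_s$ by monotonicity of the chain, whence $\dwt_f(P') \le \pwt_f(B_s) = \vwt_f(A_s) \le \vwt_f(A)$ by Definition \ref{dfn:good} and Remark \ref{rem:vwt}. Taking the supremum over finite $P' \subseteq P$ and then over prefix-free $P \subseteq B$ yields $\pwt_f(B) \le \vwt_f(A)$, as required by Definition \ref{dfn:good}.

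The main obstacle is the effectiveness of the minimization step, where both hypotheses on $f$ intervene simultaneously: convexity enters through Lemma \ref{lem:B'}, certifying that a minimizing prefix genuinely produces a good cover, and strong computability enters through $L_f$, allowing us to locate such a minimizer from the finite candidate list. Everything else is routine bookkeeping built around this single effective choice.
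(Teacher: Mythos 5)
Your construction is essentially the paper's (build finite good covers $B_s$ of initial segments $A_s$ of a fixed enumeration of $A$, invoke Lemma~\ref{lem:B'} with a minimizing prefix $\tau\subseteq\sigma_s$ chosen using strong computability, and pass to the union), but the branching test you run at each stage is not the right one, and as a result your invocation of Lemma~\ref{lem:B'} is unjustified in a case that can genuinely arise. You test whether some initial segment of $\sigma_s$ lies in $B_s$, and in the ``otherwise'' branch you assert that $B_s$ therefore fails to be a good cover of $A_{s+1}$, so that Lemma~\ref{lem:B'} applies. But the absence of any prefix of $\sigma_s$ from $B_s$ does not imply $\llb\sigma_s\rrb\not\subseteq\llb B_s\rrb$: the cylinder $\llb\sigma_s\rrb$ may already be covered by strict extensions of $\sigma_s$ lying in $B_s$. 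Concretely, if $B_s=\{\langle 0,0,0\rangle,\langle 0,0,1\rangle\}$ and $\sigma_s=\langle 0,0\rangle$, then no prefix of $\sigma_s$ belongs to $B_s$, yet $\llb\sigma_s\rrb\subseteq\llb B_s\rrb$. In that situation $B_s$ is already a good cover of $A_{s+1}$ (since $\pwt_f(B_s)=\vwt_f(A_s)\le\vwt_f(A_{s+1})$ by Remark~\ref{rem:vwt}), so the hypothesis of Lemma~\ref{lem:B'} is false and adding a minimizing prefix is not licensed by that lemma; your proof as written does not establish that $B_{s+1}$ remains a good cover.

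The fix is to test the inclusion $\llb\sigma_s\rrb\subseteq\llb B_s\rrb$ directly, as the paper does. This is still decidable, since $B_s$ is a finite set of finite strings. With this test, the ``otherwise'' branch does guarantee that $B_s$ is not a good cover of $A_{s+1}$: the only way goodness can fail is a covering failure, because $\pwt_f(B_s)=\vwt_f(A_s)\le\vwt_f(A_{s+1})$ always holds. The rest of your argument---using $L_f$ to locate the minimizing prefix, the monotone chain, and the limit argument that every finite prefix-free $P\subseteq B$ lies in some $B_s$ so that $\dwt_f(P)\le\pwt_f(B_s)=\vwt_f(A_s)\le\vwt_f(A)$---then matches the paper and goes through.
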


\begin{proof}
  For $n=0,1,2,\ldots$ let $A_n$ consist of the first $n$ elements in
  some fixed recursive enumeration of $A$.  Assume inductively that we
  have found a finite set $B_n$ which is a good cover of $A_n$.  Let
  $A_{n+1}=A_n\cup\{\sigma_n\}$.  If $\llb\sigma_n\rrb\subseteq\llb
  B_n\rrb$ let $B_{n+1}=B_n$.  Otherwise, use strong computability to
  effectively choose $\tau_n\subseteq\sigma_n$ which minimizes
  $\dwt_f(\widehat{B_n\cup\{\tau_n\}})$.  Lemma \ref{lem:B'} then
  implies that $B_{n+1}=B_n\cup\{\tau_n\}$ is a good cover of
  $A_{n+1}$.  Finally let $B=\bigcup_{n=1}^\infty B_n$.  Clearly $B$
  is r.e.\ and $\llb A\rrb\subseteq\llb B\rrb$, so it remains to prove
  that $\dwt_f(P)\le\vwt_f(A)$ for all prefix-free sets $P\subseteq
  B$.  But clearly $\dwt_f(P)=\sup\{\dwt_f(P_0)\mid P_0$ is a finite
  subset of $P\}$, so it suffices to consider finite prefix-free sets.
  If $P\subseteq B$ is finite and prefix-free, let $n$ be such that
  $P\subseteq B_n$.  Then
  $\dwt_f(P)\le\pwt_f(B_n)\le\vwt_f(A_n)\le\vwt_f(A)$, Q.E.D.
\end{proof}

\begin{lem}
  \label{lem:AB2}
  Let $f$ be strongly computable and convex.  If $A$ is r.e., we can
  effecively find an r.e.\ set $B$ such that $\llb A\rrb\subseteq\llb
  B\rrb$ and $\pwt_f(B)\le\vwt_f(A)$.
\end{lem}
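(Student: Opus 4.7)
The plan is straightforward: Lemma \ref{lem:AB2} is essentially a restatement of Lemma \ref{lem:AB} with the phrase ``$B$ is a good cover of $A$'' unpacked via Definition \ref{dfn:good}. So I would simply apply Lemma \ref{lem:AB} to the r.e.\ set $A$ to obtain effectively an r.e.\ set $B$ which is a good cover of $A$. By Definition \ref{dfn:good} this means exactly that $\llb A\rrb\subseteq\llb B\rrb$ and $\pwt_f(B)\le\vwt_f(A)$, which is the desired conclusion. (In fact the stronger equality $\pwt_f(B)=\vwt_f(A)=\dwt_f(\widehat{B})$ also follows from that definition, but is not needed here.)

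There is no real obstacle to overcome, since all of the substantive work has already been done in Lemma \ref{lem:AB}. That construction iteratively builds finite good covers $B_n$ of initial segments $A_n$ of a fixed r.e.\ enumeration of $A$, using Lemma \ref{lem:B'} at each stage to extend $B_n$ to a good cover $B_{n+1}$ of $A_{n+1}=A_n\cup\{\sigma_n\}$ by adjoining a suitably chosen $\tau_n\subseteq\sigma_n$. Convexity of $f$ is exactly what powers the extension step via Lemma \ref{lem:B'}, while strong computability of $f$ permits the minimization of $\dwt_f(\widehat{B_n\cup\{\tau\}})$ over $\tau\subseteq\sigma_n$ to be carried out effectively.

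The apparent redundancy of Lemma \ref{lem:AB2} relative to Lemma \ref{lem:AB} seems to be motivated purely by readability: subsequent arguments (in particular, those establishing equivalence of vehement $f$-randomness and strong $f$-randomness) will want to invoke the bound $\pwt_f(B)\le\vwt_f(A)$ directly, and it is convenient to have a lemma whose statement exhibits that inequality explicitly rather than forcing the reader to chase the defined notion of good cover through Definition \ref{dfn:good}.
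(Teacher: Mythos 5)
Your proposal is correct and matches the paper exactly: the paper's proof of Lemma \ref{lem:AB2} is simply ``This is a restatement of Lemma \ref{lem:AB},'' which is what you say, with the unpacking of ``good cover'' via Definition \ref{dfn:good} made explicit. The additional discussion of how Lemma \ref{lem:AB} works is accurate but not needed.
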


\begin{proof}
  This is a restatement of Lemma \ref{lem:AB}.
\end{proof}

\begin{dfn}
  \label{dfn:vfr}
  Assume that $f:\{0,1\}^*\to[-\infty,\infty]$ is computable.  We
  define $X\in\{0,1\}^*$ to be \underline{vehemently $f$-random} if
  $X\notin\bigcap_i\llb A_i\rrb$ whenever $A_i$ is uniformly r.e.\
  such that $\vwt_f(A_i)\le2^{-i}$.
\end{dfn}

\begin{thm}
  \label{thm:veh-str}
  Let $f:\{0,1\}^*\to[-\infty,\infty]$ be strongly computable and
  convex.  Then vehement $f$-randomness is equivalent to strong
  $f$-randomness.
\end{thm}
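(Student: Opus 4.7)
The plan is to reduce the theorem to the two inequalities $\vwt_f \le \pwt_f$ and (under the convexity/computability hypotheses) a ``strong-coverability'' statement that upgrades a vehement test into a strong test of the same level. Both ingredients are already on the table: the first is immediate from Lemma \ref{lem:vlep}, and the second is precisely Lemma \ref{lem:AB2}, whose proof is the genuine technical content.

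First I would handle the easy implication, vehement $\Rightarrow$ strong. Suppose $A_i$ is uniformly r.e.\ with $\pwt_f(A_i)\le 2^{-i}$. By Lemma \ref{lem:vlep} we have $\vwt_f(A_i)\le \pwt_f(A_i)\le 2^{-i}$, so $A_i$ is a vehement test; if $X$ is vehemently $f$-random then $X\notin\bigcap_i\llb A_i\rrb$, hence $X$ is strongly $f$-random. This direction uses neither convexity nor strong computability.

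Next I would prove the nontrivial direction, strong $\Rightarrow$ vehement. Given a uniformly r.e.\ vehement test $A_i$ with $\vwt_f(A_i)\le 2^{-i}$, apply Lemma \ref{lem:AB2} to each $A_i$ to effectively produce an r.e.\ set $B_i$ with $\llb A_i\rrb\subseteq\llb B_i\rrb$ and $\pwt_f(B_i)\le\vwt_f(A_i)\le 2^{-i}$. Because the construction in Lemma \ref{lem:AB2} (which rests on Lemma \ref{lem:B'} and the strong computability of $f$) is effective in an index for $A_i$, the sequence $B_i$ is again uniformly r.e. Thus $B_i$ is a strong $f$-randomness test. If $X$ is strongly $f$-random we get $X\notin\bigcap_i\llb B_i\rrb$, and since $\bigcap_i\llb A_i\rrb\subseteq\bigcap_i\llb B_i\rrb$ we conclude $X\notin\bigcap_i\llb A_i\rrb$. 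Hence $X$ is vehemently $f$-random.

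The main obstacle is not the theorem itself but the passage from a vehement cover to a prefix-free one without losing weight, i.e., Lemmas \ref{lem:B'} and \ref{lem:AB2}; everything in the present proof is bookkeeping on top of them. The only subtlety to flag is the uniformity: one must verify that, given an index for a uniformly r.e.\ sequence $A_i$, the greedy construction in Lemma \ref{lem:AB2} produces an index for a uniformly r.e.\ sequence $B_i$, which is immediate since each step is an effective search depending on the computable functions $f$ and $L_f$.
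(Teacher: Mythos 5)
Your proof is correct and is essentially the paper's proof, presented contrapositively: both directions rest on exactly the same two ingredients, Lemma \ref{lem:vlep} for the easy implication and Lemma \ref{lem:AB2} (with its uniformity) for the substantive one. The uniformity remark you flag is indeed the only point worth a sentence, and you handle it the way the paper does.
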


\begin{proof}
  Suppose $X$ is not strongly $f$-random, say $X\in\bigcap_i\llb
  A_i\rrb$ where $A_i$ is uniformly r.e.\ and $\pwt_f(A_i)\le2^{-i}$.
  By Lemma \ref{lem:vlep} we have $\vwt_f(A_i)\le\pwt_f(A_i)\le2^{-i}$
  so $X$ is not vehemently $f$-random.

  Now suppose $X$ is not vehemently $f$-random, say $X\in\bigcap_i\llb
  A_i\rrb$ where $A_i$ is uniformly r.e.\ and $\vwt_f(A_i)\le2^{-i}$.
  By Lemma \ref{lem:AB2} we can find uniformly r.e.\ $B_i$ such that
  $\llb A_i\rrb\subseteq\llb B_i\rrb$ and
  $\pwt_f(B_i)\le\vwt_f(A_i)\le2^{-i}$.  Clearly $X\in\bigcap_i\llb
  B_i\rrb$, so $X$ is not strongly $f$-random.
\end{proof}

We now sketch how to replace ``strongly computable'' by ``computable.''

\begin{lem}
  \label{lem:sc}
  Let $f$ be computable and convex.  Given $\epsilon>0$ we can
  effectively find an $\overline{f}$ which is strongly computable and
  convex and such that
  $f(\sigma)<\overline{f}(\sigma)<f(\sigma)+\epsilon$ for all
  $\sigma$.
\end{lem}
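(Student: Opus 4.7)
The plan is to construct $\overline{f}$ by replacing each weight $\wt_f(\sigma) = 2^{-f(\sigma)}$ with a rational approximation $r_\sigma$ chosen from a carefully nested family of intervals designed to preserve convexity while incurring perturbation below $\epsilon$.

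First I would fix rational sequences $(a_n)_{n\ge 0}$ and $(b_n)_{n\ge 0}$ satisfying
\[
2^{-\epsilon} < b_0 < a_0 \le b_1 < a_1 \le b_2 < a_2 \le \cdots < 1;
\]
for example, take $b_n = 1-(1-2^{-\epsilon})/2^{n+1}$ and $a_n = b_{n+1}$, which satisfies $b_0 = (1+2^{-\epsilon})/2 > 2^{-\epsilon}$ and $a_n \le b_{n+1}$ with equality. Then for each $\sigma$, since $f$ is computable, $\wt_f(\sigma)$ is a computable positive real, so one can effectively find a rational $r_\sigma$ in the nonempty open interval $(b_{|\sigma|}\wt_f(\sigma),\,a_{|\sigma|}\wt_f(\sigma))$ by computing $\wt_f(\sigma)$ to sufficient relative precision and scaling by $(a_{|\sigma|}+b_{|\sigma|})/2$. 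Define $\overline{f}(\sigma) = -\log_2 r_\sigma$.

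The verification then breaks into four routine checks. Since $2^{-\epsilon}\wt_f(\sigma) < b_{|\sigma|}\wt_f(\sigma) < r_\sigma < a_{|\sigma|}\wt_f(\sigma) < \wt_f(\sigma)$, we get $f(\sigma) < \overline{f}(\sigma) < f(\sigma)+\epsilon$. The function $\overline{f}$ is computable because each $r_\sigma$ is an explicit computable rational. For convexity, the interlacing condition $a_n \le b_{n+1}$ does the work: using convexity of $f$,
\[
\wt_{\overline{f}}(\sigma) = r_\sigma \le a_{|\sigma|}\wt_f(\sigma) \le b_{|\sigma|+1}\bigl(\wt_f(\sigma\cat\langle 0\rangle) + \wt_f(\sigma\cat\langle 1\rangle)\bigr) \le r_{\sigma\cat\langle 0\rangle} + r_{\sigma\cat\langle 1\rangle}.
\]
Finally, strong computability is essentially free: since every $\wt_{\overline{f}}(\sigma) = r_\sigma$ is an exact rational, any finite sum $\dwt_{\overline{f}}(P) = \sum_{\sigma\in P} r_\sigma$ is an exact rational, and comparing two rationals is decidable, so $L_{\overline{f}}$ is computable.

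The only obstacle worth naming is the bookkeeping to maintain $a_n \le b_{n+1}$ while keeping both sequences strictly above $2^{-\epsilon}$; this is what forces the use of position-dependent intervals rather than a single pair of constants. Degenerate strings at which $f(\sigma) \in \{-\infty,+\infty\}$ (so $\wt_f(\sigma) \in \{0,\infty\}$) require a minor separate convention, which can be dispatched by first composing with the clipping operation of Lemma \ref{lem:int} so that we may assume $f$ is finite-valued before running the above construction.
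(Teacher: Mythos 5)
Your construction is correct for finite-valued $f$ and is in fact more explicit than the paper's proof, which simply asserts that a ``straightforward but awkward construction'' yields a $\QQ$-valued $\overline{f}$. You take a slightly different route: rather than making $\overline{f}$ itself rational-valued (after which strong computability is claimed to be ``easy'' but actually requires comparing sums of irrational algebraic numbers $2^{-p/q}$), you make the \emph{weights} $\wt_{\overline{f}}(\sigma)=r_\sigma$ rational, which renders $\dwt_{\overline{f}}$ of a finite prefix-free set an exact rational and makes $L_{\overline{f}}$ transparently decidable. The nested-interval bookkeeping with $a_n=b_{n+1}$ is the right idea and the convexity chain checks out: $r_\sigma< a_{|\sigma|}\wt_f(\sigma)\le a_{|\sigma|}\bigl(\wt_f(\sigma\cat\langle0\rangle)+\wt_f(\sigma\cat\langle1\rangle)\bigr)=b_{|\sigma|+1}\wt_f(\sigma\cat\langle0\rangle)+b_{|\sigma|+1}\wt_f(\sigma\cat\langle1\rangle)< r_{\sigma\cat\langle0\rangle}+r_{\sigma\cat\langle1\rangle}$.

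The one slip is your proposed dispatch of the degenerate values via Lemma~\ref{lem:int}: the clipping $f_0(\sigma)=\min(\max(f(\sigma),0),2|\sigma|)$ does \emph{not} preserve convexity. For example, take $f(\langle\rangle)=5$, $f(\langle0\rangle)=5$, $f(\langle1\rangle)=+\infty$; this $f$ is convex, but $f_0(\langle\rangle)=0$ (forced by the cap $2|\langle\rangle|=0$) while $f_0(\langle0\rangle)=f_0(\langle1\rangle)=2$, giving $\wt_{f_0}(\langle\rangle)=1>1/4+1/4$. So one cannot simply precompose with that lemma. The infinite values need to be handled in place, e.g., by declaring $\overline{f}(\sigma)=\pm\infty$ whenever $f(\sigma)=\pm\infty$ and restricting the inequality $f(\sigma)<\overline{f}(\sigma)<f(\sigma)+\epsilon$ to the $\sigma$ where $f(\sigma)$ is finite (which is the only reading under which the lemma is even true). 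You then need to verify the convexity constraints at the boundary between finite and infinite values; this works out using the same interlacing inequalities, since $\wt_f(\sigma)=0$ only loosens the constraint at $\sigma$'s children and tightens it at $\sigma$'s parent in a way absorbed by $a_{|\rho|}=b_{|\rho|+1}$. (One also has to accept that with $\pm\infty$ weights present, decidability of $L_{\overline{f}}$ requires knowing which strings carry them, which is not automatic for a general computable $f:\{0,1\}^*\to[-\infty,\infty]$; but the paper itself glosses over this, so you are in good company.)
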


\begin{proof}
  Let $\QQ$ be the set of rational numbers.  By a straightforward but
  awkward construction, we can find $\overline{f}:\{0,1\}^*\to\QQ$
  which is computable and convex and such that
  $f(\sigma)<\overline{f}(\sigma)<f(\sigma)+\epsilon$ for all
  $\sigma$.  From the $\QQ$-valuedness of $\overline{f}$ it follows
  easily that $\overline{f}$ is strongly computable.
\end{proof}

\begin{lem}
  \label{lem:AB3}
  Let $f$ be computable and convex.  Given $\delta>0$ and an r.e.\ set
  $A$, we can effectively find an r.e.\ set $B$ such that $\llb
  A\rrb\subseteq\llb B\rrb$ and
  $\pwt_f(B)\le(1+\delta)\cdot\vwt_f(A)$.
\end{lem}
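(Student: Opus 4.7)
The plan is to reduce to the strongly computable case already handled in Lemma \ref{lem:AB2} by approximating $f$ from above by a strongly computable convex $\overline{f}$, and then tracking the distortion this approximation introduces on $\pwt_f$ and $\vwt_f$.

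First I would choose a rational $\epsilon > 0$ small enough that $2^\epsilon \le 1+\delta$, which is possible since $2^\epsilon \to 1$ as $\epsilon \to 0^+$. Applying Lemma \ref{lem:sc} I then obtain an $\overline{f}$ which is strongly computable and convex and satisfies $f(\sigma) < \overline{f}(\sigma) < f(\sigma) + \epsilon$ for every $\sigma$. Since $\overline{f}$ satisfies the hypotheses of Lemma \ref{lem:AB2}, applying it to the r.e.\ set $A$ (with premeasure $\overline{f}$) yields, effectively, an r.e.\ set $B$ with $\llb A\rrb \subseteq \llb B\rrb$ and $\pwt_{\overline{f}}(B) \le \vwt_{\overline{f}}(A)$.

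It remains to compare the $f$- and $\overline{f}$-weights. From $f(\sigma) < \overline{f}(\sigma) < f(\sigma) + \epsilon$ one reads off the pointwise estimate $2^{-\epsilon}\wt_f(\sigma) < \wt_{\overline{f}}(\sigma) < \wt_f(\sigma)$, which upon summing gives $\dwt_{\overline{f}}(S) \le \dwt_f(S) \le 2^\epsilon \dwt_{\overline{f}}(S)$ for every $S \subseteq \{0,1\}^*$. Taking the appropriate sup over prefix-free subsets of $B$ yields $\pwt_f(B) \le 2^\epsilon \pwt_{\overline{f}}(B)$, and taking the infimum over covers of $\llb A\rrb$ yields $\vwt_{\overline{f}}(A) \le \vwt_f(A)$. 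Chaining these with the estimate from Lemma \ref{lem:AB2} gives
\[
\pwt_f(B) \le 2^\epsilon \pwt_{\overline{f}}(B) \le 2^\epsilon\, \vwt_{\overline{f}}(A) \le 2^\epsilon\, \vwt_f(A) \le (1+\delta)\, \vwt_f(A),
\]
as required.

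There is no real obstacle: once Lemma \ref{lem:sc} is granted the argument is just bookkeeping on weights. The only thing to be a little careful about is confirming that Lemma \ref{lem:AB2} truly applies to $\overline{f}$, which it does since strong computability and convexity are exactly its hypotheses, and that the one-sided inequalities $\overline{f}(\sigma) > f(\sigma)$ and $\overline{f}(\sigma) < f(\sigma) + \epsilon$ are used in the correct directions — the first to push $\vwt_{\overline{f}}$ below $\vwt_f$, the second to keep $\pwt_f$ within a factor of $2^\epsilon$ of $\pwt_{\overline{f}}$.
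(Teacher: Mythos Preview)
Your proof is correct and follows essentially the same approach as the paper's own proof: the paper likewise applies Lemma~\ref{lem:sc} with $\epsilon=\log_2(1+\delta)$, invokes Lemma~\ref{lem:AB2} for the resulting $\overline{f}$, and then notes that the weight comparison $\pwt_f(B)\le(1+\delta)\cdot\vwt_f(A)$ is ``easy to check.'' You have simply spelled out that last verification in full.
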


\begin{proof}
  Let $\overline{f}$ be as in Lemma \ref{lem:sc} with
  $\epsilon=\log_2(1+\delta)$.  If $A$ is r.e., apply Lemma
  \ref{lem:AB2} to find an r.e.\ set $B$ such that $\llb
  A\rrb\subseteq\llb B\rrb$ and
  $\pwt_{\overline{f}}(B)\le\vwt_{\overline{f}}(A)$.  It is then easy
  to check that $\pwt_f(B)\le(1+\delta)\cdot\vwt_f(A)$.
\end{proof}

\begin{thm}
  \label{thm:veh-str-2}
  Let $f:\{0,1\}^*\to[-\infty,\infty]$ be computable and convex.  Then
  vehement $f$-randomness is equivalent to strong $f$-randomness.
\end{thm}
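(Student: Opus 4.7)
The plan is to mimic the proof of Theorem \ref{thm:veh-str} almost word for word, replacing the appeal to Lemma \ref{lem:AB2} by an appeal to Lemma \ref{lem:AB3}. The loss of strong computability costs only a multiplicative factor $1+\delta$, which can be absorbed by shifting indices in the test.

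First I would handle the easy direction: strong $f$-randomness implies vehement $f$-randomness. The argument from the proof of Theorem \ref{thm:veh-str} transfers verbatim, since it uses only Lemma \ref{lem:vlep} (which gives $\vwt_f(A_i) \le \pwt_f(A_i)$) and makes no appeal to strong computability.

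For the converse, suppose $X$ is not vehemently $f$-random, witnessed by a uniformly r.e.\ sequence $A_i$ with $\vwt_f(A_i) \le 2^{-i}$ and $X \in \bigcap_i \llb A_i\rrb$. I would apply Lemma \ref{lem:AB3} uniformly in $i$ with the fixed choice $\delta = 1$, obtaining a uniformly r.e.\ sequence $B_i$ with $\llb A_i\rrb \subseteq \llb B_i\rrb$ and $\pwt_f(B_i) \le 2\,\vwt_f(A_i) \le 2^{\,1-i}$. Reindexing by $B'_i = B_{i+1}$ yields a uniformly r.e.\ family with $\pwt_f(B'_i) \le 2^{-i}$, and since $X \in \bigcap_i \llb A_{i+1}\rrb \subseteq \bigcap_i \llb B'_i\rrb$ this witnesses that $X$ is not strongly $f$-random, completing the equivalence.

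The main obstacle is nothing more than bookkeeping: one must be sure that the construction packaged in Lemma \ref{lem:AB3} (which itself invokes Lemma \ref{lem:sc} and Lemma \ref{lem:AB2}) goes through uniformly in an index for the r.e.\ set $A_i$. Because that construction is a direct effective procedure in the enumeration of $A$ and in $\delta$, this uniformity is automatic once $\delta = 1$ is fixed, and the proof is complete.
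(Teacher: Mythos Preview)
Your proposal is correct and follows the paper's own proof essentially verbatim: the paper simply says to proceed as in Theorem~\ref{thm:veh-str} but use Lemma~\ref{lem:AB3} with $\delta=1$ in place of Lemma~\ref{lem:AB2}, which is exactly your argument (with the harmless reindexing made explicit). One minor slip worth correcting: you label the direction handled by Lemma~\ref{lem:vlep} as ``strong $f$-randomness implies vehement $f$-randomness,'' but the inequality $\vwt_f\le\pwt_f$ actually yields the opposite implication (vehement $\Rightarrow$ strong, by contrapositive), and it is your second paragraph that correctly establishes strong $\Rightarrow$ vehement.
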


\begin{proof}
  Proceed as in the proof of Theorem \ref{thm:veh-str} but instead of
  Lemma \ref{lem:AB2} use Lemma \ref{lem:AB3} with $\delta=1$.
\end{proof}

\section{Propagation of vehement $f$-randomness}
\label{sec:vfrPA}

In this section we present an alternative proof of one of our main
results concerning propagation of strong $f$-randomness, Corollary
\ref{cor:sfrPA}.  Our alternative proof proceeds via vehement
$f$-randomness and depends heavily on Remark \ref{rem:vwt}.  Our
alternative proof has the advantage of being a direct generalization
of one of the known proofs (see \cite[Proposition 7.4]{do-hi-mi-ni})
of the corresponding result for Martin-L\"of randomness, Theorem
\ref{thm:mlrQ}.

\begin{thm}
  \label{thm:sfrQ}
  Let $f:\{0,1\}^*\to[-\infty,\infty]$ be computable and convex.  Let
  $Q$ be a nonempty $\Pi^0_1$ subset of $\{0,1\}^\NN$.  If $X$ is
  strongly $f$-random, then $X$ is strongly $f$-random relative to
  some $Z\in Q$.
\end{thm}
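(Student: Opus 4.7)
The plan is to reduce to vehement $f$-randomness via Theorem~\ref{thm:veh-str-2} and then adapt the $\Pi^0_1$-compactness argument used in the classical proof of Theorem~\ref{thm:mlrQ}. Since $f$ is convex, strong and vehement $f$-randomness coincide, so it suffices to produce $Z\in Q$ such that $X$ is vehemently $f$-random relative to $Z$. I will suppose for contradiction that no such $Z\in Q$ exists and derive a (classical) vehement-$f$-test that catches $X$.

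Relativizing Corollary~\ref{cor:srsc}, the sets $S_i^Z=\{\tau\mid\KA^Z(\tau)<f(\tau)-i\}$ form a universal strong-$f$-randomness test relative to $Z$, satisfying $\pwt_f(S_i^Z)\le 2^{-i}$ and hence $\vwt_f(S_i^Z)\le 2^{-i}$ by Lemma~\ref{lem:vlep}. Because the $S_i^Z$ are uniformly $Z$-r.e., there is a single r.e.\ set $E\subseteq\NN\times\{0,1\}^*\times\{0,1\}^*$ with $\sigma\in S_i^Z$ iff $(\exists\tau\subset Z)((i,\sigma,\tau)\in E)$, and under the contradictory assumption $X\in\bigcap_i\llb S_i^Z\rrb$ for every $Z\in Q$.

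The core construction: for each $i$ and $\sigma\in\{0,1\}^*$ let $T_{i,\sigma}=\{Z\mid(\exists\sigma'\subseteq\sigma)(\sigma'\in S_i^Z)\}$, an open set whose basic open pieces are uniformly enumerable from $E$, and set $B_i=\{\sigma\mid Q\subseteq T_{i,\sigma}\}$. I expect three verifications. First, $\{B_i\}$ is uniformly r.e.: since $Q$ is $\Pi^0_1$ (hence compact), ``$Q\subseteq T_{i,\sigma}$'' is equivalent to the existence of a finite subfamily of basic pieces of $T_{i,\sigma}$ already covering $Q$, a $\Sigma^0_1$ condition in $(i,\sigma)$. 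Second, $X\in\llb B_i\rrb$: for each $Z\in Q$ pick a witness $(\sigma_Z,\tau_Z)$ with $\sigma_Z\subset X$, $\tau_Z\subset Z$, and $(i,\sigma_Z,\tau_Z)\in E$; the clopen sets $\llb\tau_Z\rrb$ cover $Q$, so finitely many $\tau_{Z_1},\ldots,\tau_{Z_k}$ suffice, and the longest of $\sigma_{Z_1},\ldots,\sigma_{Z_k}$, being an initial segment of $X$, lies in $B_i$. Third, $\vwt_f(B_i)\le 2^{-i}$: by construction $\llb B_i\rrb\subseteq\llb S_i^Z\rrb$ for every $Z\in Q$, so Remark~\ref{rem:vwt} gives $\vwt_f(B_i)\le\vwt_f(S_i^Z)\le 2^{-i}$.

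Once these steps are in place, $\{B_i\}$ is a vehement-$f$-test catching $X$, contradicting vehement (and hence, by Theorem~\ref{thm:veh-str-2}, strong) $f$-randomness of $X$. The main obstacle is the compactness bookkeeping in step~(i), where one must convert a $\Pi^0_1$ containment into an effective finite-subcover search; the crucial leverage for step~(iii) is the observation from Remark~\ref{rem:vwt} that $\vwt_f$ depends only on the open set $\llb\cdot\rrb$, which is precisely what permits the estimate of $\vwt_f(B_i)$ against a single $\vwt_f(S_i^Z)$ even though $B_i$ is defined by quantification over all $Z\in Q$.
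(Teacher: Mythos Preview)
Your proof is correct and follows essentially the same approach as the paper's: both reduce to vehement $f$-randomness via Theorem~\ref{thm:veh-str-2}, intersect the relativized universal tests over $Q$ using compactness to obtain a single uniformly r.e.\ vehement test, and bound its $\vwt_f$ via Remark~\ref{rem:vwt}. The only cosmetic differences are that you phrase the argument by contradiction and spell out the r.e.\ set $B_i$ explicitly, whereas the paper argues directly and simply asserts the existence of an r.e.\ $\widetilde S_i$ with $\llb\widetilde S_i\rrb=\bigcap_{Z\in Q}\llb S_i^Z\rrb$.
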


\begin{proof}
  Relativizing Corollary \ref{cor:srsc} let $S_i^Z$ be a universal
  test for strong $f$-randomness relative to $Z$.  In other words,
  $S_i^Z$ is uniformly r.e.\ relative to $Z$ and
  $\pwt_f(S_i^Z)\le2^{-i}$ and $\forall X\,\forall
  Z\,(X\notin\bigcap_i\llb S_i^Z\rrb\liff X$ is strongly $f$-random relative
  to $Z)$.  By Lemma \ref{lem:vlep} we have
  $\vwt_f(S_i^Z)\le\pwt_f(S_i^Z)\le2^{-i}$ so by Theorem
  \ref{thm:veh-str-2} $S_i^Z$ is also a universal test for vehement
  $f$-randomness relative to $Z$.  Thus, letting $U_i^Z=\llb S_i^Z\rrb$, we
  have
  \begin{center}
    $\forall X\,\forall Z\,(X\notin\bigcap_iU_i^Z\liff X$ is
    vehemently $f$-random relative to $Z)$
  \end{center}
  and $U_i^Z$ is uniformly $\Sigma^0_1$ relative to $Z$.

  Let $\widetilde{U}_i=\bigcap_{Z\in Q}U_i^Z$.  Since $Q$ is
  $\Pi^0_1$, it follows by compactness that $\widetilde{U}_i$ is
  uniformly $\Sigma^0_1$.  Therefore, let $\widetilde{S}_i$ be
  uniformly r.e.\ such that $\llb\widetilde{S}_i\rrb=\widetilde{U}_i$.  For
  any $Z\in Q$ we have $\widetilde{U}_i\subseteq U_i^Z$, i.e.,
  $\llb\widetilde{S}_i\rrb\subseteq\llb S_i^Z\rrb$, so
  $\vwt_f(\widetilde{S}_i)\le\vwt_f(S_i^Z)\le2^{-i}$ by Remark
  \ref{rem:vwt}.  Thus $\widetilde{S}_i$ is a test for vehement
  $f$-randomness.  In particular we have $\forall X\,(X$ vehemently
  $f$-random $\limp X\notin\bigcap_i\widetilde{U}_i)$.

  Suppose now that $X$ is strongly $f$-random.  By Theorem
  \ref{thm:veh-str-2} $X$ is vehemently $f$-random, so
  \[
  X\notin\bigcap_i\widetilde{U}_i=\bigcap_i\bigcap_{Z\in Q}U_i^Z
  =\bigcap_{Z\in Q}\bigcap_iU_i^Z.
  \]
  Let $Z\in Q$ be such that $X\notin\bigcap_iU_i^Z$.  Then $X$ is
  vehemently $f$-random relative to $Z$, so by Theorem
  \ref{thm:veh-str-2} $X$ is strongly $f$-random relative to $Z$,
  Q.E.D.
\end{proof}

\begin{thm}
  \label{thm:vfrQ}
  Let $f:\{0,1\}^*\to[-\infty,\infty]$ be computable and convex.  Let
  $Q$ be a nonempty $\Pi^0_1$ subset of $\{0,1\}^\NN$.  If $X$ is
  vehemently $f$-random, then $X$ is vehemently $f$-random relative to
  some $Z\in Q$.
\end{thm}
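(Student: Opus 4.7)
The plan is to reduce this to Theorem \ref{thm:sfrQ} via the equivalence between vehement and strong $f$-randomness established in Theorem \ref{thm:veh-str-2}. Since $f$ is computable and convex, both hypotheses of Theorem \ref{thm:veh-str-2} are in force, and moreover these hypotheses persist under relativization: any computable $f$ is $Z$-computable for any oracle $Z$, and convexity of $f$ is an oracle-free condition on the values $\wt_f(\sigma)$.

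First, suppose $X$ is vehemently $f$-random. By Theorem \ref{thm:veh-str-2}, $X$ is strongly $f$-random. Next, apply Theorem \ref{thm:sfrQ} to obtain some $Z \in Q$ such that $X$ is strongly $f$-random relative to $Z$. Finally, relativize Theorem \ref{thm:veh-str-2} to $Z$: since $f$ is still computable relative to $Z$ and convex, strong $f$-randomness relative to $Z$ is equivalent to vehement $f$-randomness relative to $Z$. Therefore $X$ is vehemently $f$-random relative to $Z$, completing the proof.

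There is essentially no obstacle, since all the work has already been done: Theorem \ref{thm:veh-str-2} is the nontrivial equivalence (relying on Lemmas \ref{lem:AB3} and \ref{lem:B'}), and Theorem \ref{thm:sfrQ} supplies the propagation to a member of $Q$. The only thing worth checking is that the proof of Theorem \ref{thm:veh-str-2} relativizes cleanly, which it does because Lemmas \ref{lem:sc}--\ref{lem:AB3} relativize in a routine way (every ``r.e.'' becomes ``$Z$-r.e.,'' every ``effectively find'' becomes ``effectively find relative to $Z$,'' and the convexity inequality and $f$-weights do not involve the oracle at all).
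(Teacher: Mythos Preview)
Your proof is correct and matches the paper's own argument, which simply states that the result is immediate from Theorems \ref{thm:veh-str-2} and \ref{thm:sfrQ}. You have merely spelled out the two applications of Theorem \ref{thm:veh-str-2} (unrelativized and relativized) that the paper leaves implicit.
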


\begin{proof}
  This is immediate from Theorems \ref{thm:veh-str-2} and
  \ref{thm:sfrQ}.
\end{proof}

\section{Other characterizations of strong $f$-randomness}
\label{sec:sfrPA}

In this section we present two new characterizations of strong
$f$-randomness.  One of our new characterizations is in terms of
$f$-randomness relative to a $\PA$-degree.  The other is in terms of
what we call \emph{provable noncomplexity}.

\begin{thm}
  \label{thm:frPA}
  Let $f:\{0,1\}^*\to[-\infty,\infty]$ be computable.  The following
  are pairwise equivalent.
  \begin{enumerate}
  \item $X$ is strongly $f$-random.
  \item $X$ is strongly $f$-random relative to some $\PA$-degree.
  \item $X$ is $f$-random relative to some $\PA$-degree.
  \end{enumerate}
\end{thm}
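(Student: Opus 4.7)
My plan is to establish the three-way equivalence via the chain $(1) \Rightarrow (2) \Rightarrow (3) \Rightarrow (1)$. The first two are essentially free: $(1) \Rightarrow (2)$ is exactly Corollary~\ref{cor:sfrPA}, while $(2) \Rightarrow (3)$ follows from the remark after Definition~\ref{dfn:frsfr}, relativized to $Z$ (strong $f$-randomness implies $f$-randomness).

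The substantive content lies in $(3) \Rightarrow (1)$, which I argue by contrapositive. Suppose $X$ is not strongly $f$-random. By Corollary~\ref{cor:srsc}, the uniformly r.e.\ sequence $S_i = \{\tau \mid \KA(\tau) < f(\tau) - i\}$ satisfies $\pwt_f(S_i) \le 2^{-i}$ and $X \in \bigcap_i \llb S_i\rrb$. I aim to show that for every $Z$ of $\PA$-degree there is a uniformly $Z$-r.e.\ sequence $B_i^Z$ with $\llb S_i\rrb \subseteq \llb B_i^Z\rrb$ and $\dwt_f(B_i^Z) \le 2^{-i+c}$ for some constant $c=c(Z)$; then $X \in \bigcap_i\llb B_i^Z\rrb$ witnesses that $X$ fails $f$-randomness relative to $Z$.

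The construction of $B_i^Z$ rests on $Z$'s ability to compute a completion $T$ of first-order Peano arithmetic. The natural prefix-free cover $\widehat{S}_i$ has $\dwt_f(\widehat{S}_i) \le \pwt_f(S_i) \le 2^{-i}$ and $\llb\widehat{S}_i\rrb = \llb S_i\rrb$, but is only $\Delta^0_2$, hence not uniformly $Z$-r.e.\ for arbitrary $Z$ of $\PA$-degree (e.g.\ low ones). The idea is to use $T$ to commit to prefix-minimality judgments: place $\sigma$ in $B_i^Z$ iff $T$ asserts $\sigma \in S_i$ and denies the existence of any proper prefix of $\sigma$ in $S_i$. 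The resulting set is $Z$-computable, prefix-free by consistency of $T$, and covers $\llb S_i\rrb$ by $\Sigma^0_1$-completeness of $\PA$ on true statements.

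The principal obstacle is bounding $\dwt_f(B_i^Z)$: because $T$ need not be $\Sigma^0_1$-sound, $B_i^Z$ may contain ``phantom'' strings $\sigma \notin S_i$ for which $T$ falsely certifies membership. I expect to control these via a compactness argument on the $\Pi^0_1$ class of all $\PA$-completions, in the spirit of Simpson/Yokoyama's treatment of Theorem~\ref{thm:mlrQ}, but transposed to direct $f$-weight rather than fair-coin measure. Unlike in Theorem~\ref{thm:sfrQ}, no convexity of $f$ is assumed here, so the equivalence between vehement and strong $f$-randomness from Theorem~\ref{thm:veh-str-2} is unavailable; the argument must instead extract a uniformly $Z$-r.e.\ $\dwt_f$-bounded cover directly from the $\pwt_f$-bound on $S_i$, and it is this conversion lemma---``$\PA$-degree $Z$ converts r.e.\ $\pwt_f$-tests into $Z$-r.e.\ $\dwt_f$-tests with a uniform $O_Z(1)$ loss''---that forms the technical heart of the theorem.
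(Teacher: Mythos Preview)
Your chain $(1)\Rightarrow(2)\Rightarrow(3)\Rightarrow(1)$ and the contrapositive strategy for $(3)\Rightarrow(1)$ match the paper exactly. The gap is that your ``conversion lemma'' is asserted but not proved: you correctly identify that the completion-of-$\PA$ construction may produce phantom strings and that a compactness argument should handle it, but you stop there. As written, the proposal is an outline with its technical core missing.

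The paper's argument for $(3)\Rightarrow(1)$ is shorter and sidesteps the phantom-string issue entirely. Rather than passing through a completion $T$ and then repairing its unsoundness, the paper encodes the desired object directly as a nonempty $\Pi^0_1$ class. Given uniformly r.e.\ $A_i$ with $\pwt_f(A_i)\le 2^{-i}$ and $X\in\bigcap_i\llb A_i\rrb$, let $Q$ be the class of sequences $(Z_i)_{i\in\NN}$ of subsets of $\{0,1\}^*$ satisfying $\dwt_f(Z_i)\le 2^{-i}$ and $\forall\sigma\,(\sigma\in A_i\Rightarrow(\exists\rho\subseteq\sigma)\,\rho\in Z_i)$. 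These are $\Pi^0_1$ conditions on the code of $(Z_i)$, and the sequence $(\widehat{A}_i)$ witnesses $Q\ne\emptyset$. Any $Z$ of $\PA$-degree computes a member $(B_i)\in Q$; then $\llb A_i\rrb\subseteq\llb B_i\rrb$ and $\dwt_f(B_i)\le 2^{-i}$ hold by definition of $Q$, so $X$ is not $f$-random relative to $Z$. No constant $c=c(Z)$ is needed, and no soundness worries arise because membership in $Q$ is a genuine (not $T$-internal) property. Your compactness intuition is exactly right; the point is to apply it \emph{before} invoking the $\PA$-degree, by building the $\Pi^0_1$ class of acceptable covers, rather than \emph{after}, by trying to bound the damage a bad completion can do.
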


\begin{proof}
  The implication $1\limp2$ follows from Theorem \ref{thm:sfrPA}.  The
  implication $2\limp3$ is trivial.  It remains to prove $3\limp1$.
  Assume that $1$ fails, i.e., $X$ is not strongly $f$-random.  Let
  $A_i$, $i\in\NN$ be uniformly r.e.\ such that $\pwt_f(A_i)\le2^{-i}$
  and $X\in\bigcap_i\llb A_i\rrb$.  For each $i$ let $\widehat{A}_i$
  be the set of minimal elements of $A_i$.  Let $Q$ be the set of
  sequences $Z_i$, $i\in\NN$ such that $Z_i\subseteq\{0,1\}^*$ and
  $\dwt_f(Z_i)\le2^{-i}$ and $\forall\sigma\,(\sigma\in
  A_i\limp\exists\rho\,(\rho\subseteq\sigma$ and $\rho\in Z_i))$.  The
  sequence $\widehat{A}_i$, $i\in\NN$ belongs to $Q$, so $Q$ is
  nonempty.  Moreover, $Q$ may be viewed as a $\Pi^0_1$ set in the
  Cantor space.  Therefore, given $Z$ of $\PA$-degree, we can find a
  sequence $B_i$, $i\in\NN$ which is Turing reducible to $Z$ and
  belongs to $Q$.  From the definition of $Q$ it follows that
  $\dwt_f(B_i)\le2^{-i}$ and $X\in\bigcap_i\llb B_i\rrb$.  Thus $X$ is
  not $f$-random relative to $Z$.  This holds for all $\PA$-degrees,
  so $3$ fails, Q.E.D.
\end{proof}

For our second characterization, let $\PA$ denote first-order Peano
arithmetic.  Within $\PA$ we define prefix-free complexity
$\KP:\{0,1\}^*\to\NN$ and a priori complexity
$\KA:\{0,1\}^*\to(0,\infty)$ as usual.  Also within $\PA$ we define
$\KP^{(j)}=$ prefix-free complexity relative to $0^{(j)}$, and
$\KA^{(j)}=$ a priori complexity relative to $0^{(j)}$, where
$0^{(j)}=$ the $j$th Turing jump of $0$.  Let $f:\{0,1\}^*\to\NN$ and
$X\in\{0,1\}^\NN$ be arbitrary.

\begin{dfn}
  Let $K$ stand for $\KP$ or $\KA$, and let $Z$ be a Turing oracle.
  We define $X$ to be \underline{$K^Z$-$f$-complex} if $\exists
  c\,\forall n\,(K^Z(X\res n)>f(X\res n)-c))$.
\end{dfn}

\begin{dfn}
  Let $M$ be a nonstandard model of $\PA$.  We define $X$ to be
  \underline{$M$-$f$-complex} if $\forall r\,\exists c\,\forall
  n\,(\KP_r(X\res n)>f(X\res n)-c)$.  Here $r$ ranges over $M$-finite
  functions $r$ with prefix-free domain, and
  $\KP_r(\tau)=\min\{|\sigma|\mid r(\sigma)=\tau\}$.
\end{dfn}

\begin{dfn}
  Let $K$ stand for $\KP$ or $\KA$ or $\KP^{(j)}$ or $\KA^{(j)}$.  Let
  $T$ be a consistent theory extending $\PA$.  We define $X$ to be
  \underline{provably $K$-$f$-noncomplex in $T$} if $\forall
  c\,\exists n\,(T\vdash(\exists m<n)\,(K(X\res m)<f(X\res m)-c))$.
\end{dfn}

\begin{thm}
  \label{thm:TM}
  Let $K$ stand for $\KP$ or $\KA$.  Let $T$ be a recursively
  axiomatizable, consistent theory extending $\PA$.  Let
  $f:\{0,1\}^*\to\NN$ and $X\in\{0,1\}^\NN$ be arbitrary.  For each
  $j\in\NN$ the following are pairwise equivalent.
  \begin{enumerate}
  \item $X$ is not $K^Z$-$f$-complex for any $Z$ of $\PA$-degree.
  \item $X$ is not $M$-$f$-complex for any nonstandard $M\models T$.
  \item $X$ is provably $K$-$f$-noncomplex in some recursively
    axiomatizable, consistent theory extending $T$.
  \item $X$ is provably $K^{(j)}$-$f$-noncomplex in some recursively
    axiomatizable, consistent theory extending $T$.
  \end{enumerate}
\end{thm}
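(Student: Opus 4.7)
The strategy is to reduce condition $(1)$ to the external statement ``$X$ is not strongly $f$-random'', and then to prove $(1) \liff (3) \liff (4)$ and $(1) \liff (2)$. The opening observation, valid for both $K = \KP$ and $K = \KA$, uses relativized Theorems~\ref{thm:rc} and~\ref{thm:srsc} to identify $K^Z$-$f$-completeness of $X$ with $X$ being $f$-random or strongly $f$-random relative to $Z$; then the equivalences $1 \liff 2 \liff 3$ of Theorem~\ref{thm:frPA} give that the existence of a $\PA$-degree $Z$ making $X$ $f$-random or strongly $f$-random is equivalent to $X$ being strongly $f$-random. Thus $(1)$ collapses to ``$X$ is not strongly $f$-random.''

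For the implications $(3) \limp (1)$ and $(4) \limp (1)$ (both handled uniformly), let $T'$ witness the condition and let $Z$ be any $\PA$-degree. Extend $T'$ to a completion $T^* \leT Z$ using universality of the $\Pi^0_1$ set of completions of $\PA$. Completeness of $T^*$ upgrades the bounded existential $T^* \vdash (\exists m < n)(K(X \res m) < f(X \res m) - c)$ to a specific $m$ with $T^* \vdash K(X \res m) < f(X \res m) - c$, for each $c$. Set $F(\tau) = \min\{k : T^* \vdash K(\tau) < k\}$, enumerable from above using the $T^*$-oracle. Since $\PA$ proves Kraft's inequality for $K$ and for $K^{(j)}$, a nonstandard-witness argument in any model of $T^*$ combined with $\Delta_0$-absoluteness of rational inequalities yields $\sum_\tau 2^{-F(\tau)} \le 1$. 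Kraft--Chaitin then produces a $T^*$-computable prefix-free machine $N$ with $K_N(\tau) \le F(\tau) + O(1)$, hence $K^{T^*}(\tau) \le F(\tau) + O(1)$, giving failure of $K^{T^*}$-$f$-completeness and thus of $K^Z$-$f$-completeness. Conversely, $(1) \limp (3)$ for $K = \KA$ is immediate by $\Sigma_1$-completeness applied to the universal test in Corollary~\ref{cor:srsc}, with $T' = T$; $(1) \limp (4)$ follows from $\PA \vdash K^{(j)} \le K$, which also yields $(3) \limp (4)$ trivially and reverses via $(4) \limp (1) \limp (3)$. The $K = \KP$ case of $(1) \limp (3)$ is immediate by Corollary~\ref{cor:rc} when $X$ also fails $f$-randomness; otherwise I augment $T$ with $\Sigma_1$-sentences $\varphi_c = (\exists m < N_c)(\KP(X \res m) < f(X \res m) - c)$ for suitable $N_c$, whose joint consistency with $T$ is established by a compactness-Henkin argument grounded in Theorem~\ref{thm:frPA}.

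For $(1) \liff (2)$ I use the duality between nonstandard models $M \models T$ and $\PA$-degrees $Z = \mathrm{Th}(M)$. Given $(1)$ and a nonstandard $M$, the $T^*$-computable machine $N$ constructed above (with $T^* = \mathrm{Th}(M)$) re-encodes as an $M$-finite prefix-free function $r$ after $M$-finitely many enumeration stages, witnessing the failure of $M$-$f$-completeness. For the converse, given a $\PA$-degree $Z$, build via Henkin's method a nonstandard $M \models T$ with $\mathrm{Th}(M) \leT Z$; the $M$-finite $r$ supplied by $(2)$ is then $Z$-computable, so $K^Z(X \res n) \le \KP_r(X \res n) + O(1)$ forces failure of $K^Z$-$f$-completeness. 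The main obstacle I anticipate is the consistency proof for $T'$ in the $K = \KP$ case of $(1) \limp (3)$ when $X$ is $f$-random but not strongly $f$-random: this requires constructing a nonstandard model of $T$ in which some nonstandard initial segment of $X$ has $\KP$ below $f - c$, which will rely on combining the $\PA$-degree characterization with a careful Henkin-style construction.
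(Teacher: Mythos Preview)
The paper does not actually prove this theorem: its entire proof reads ``This is a special case of \cite[Theorems 4.1 and 4.4]{yokoyama-TM-arxiv-131014}.'' So there is no in-paper argument to compare against; you are attempting to supply one.

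Your implication $(3)\Rightarrow(1)$ via completing $T'$ below a given $\PA$-degree $Z$ and applying Kraft--Chaitin to $F(\tau)=\min\{k:T^*\vdash K(\tau)<k\}$ is a reasonable line and does not use computability of $f$. The problem is the converse. Your central reduction --- identifying condition~(1) with ``$X$ is not strongly $f$-random'' via Theorems~\ref{thm:rc}, \ref{thm:srsc}, and \ref{thm:frPA} --- requires $f$ to be computable, whereas Theorem~\ref{thm:TM} is stated for \emph{arbitrary} $f:\{0,1\}^*\to\NN$. The notions of $f$-randomness and strong $f$-randomness in Definition~\ref{dfn:frsfr} are not even defined for noncomputable $f$, and Corollaries~\ref{cor:rc} and \ref{cor:srsc} do not apply. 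Your routes $(1)\Rightarrow(3)$ and $(1)\Rightarrow(4)$ therefore collapse in the general case. Even granting computability of $f$, your case $(1)\Rightarrow(3)$ for $K=\KP$ is, by your own admission, incomplete: you propose to adjoin $\Sigma_1$-sentences $\varphi_c$ to $T$ and establish joint consistency ``by a compactness-Henkin argument grounded in Theorem~\ref{thm:frPA}'', but no such argument is given, and this is exactly where the nontrivial content lies.

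A smaller point: in $(2)\Rightarrow(1)$ you claim that the $M$-finite $r$ supplied by $(2)$ is $Z$-computable. What you actually obtain from a Henkin model $M$ with elementary diagram $\leT Z$ is that the restriction of $r$ to standard inputs with standard outputs is \emph{partial} $Z$-computable; this suffices (the relevant witnesses $\sigma$ are standard because $|\sigma|<f(X\res n)-c$ and $f(X\res n)-c$ is standard), but the passage from an $M$-finite object to a $Z$-oracle prefix-free machine deserves more than your one-line sketch.
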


\begin{proof}
  This is a special case of \cite[Theorems 4.1 and
  4.4]{yokoyama-TM-arxiv-131014}.
\end{proof}

\begin{thm}
  \label{thm:sfrTM}
  Let $f:\{0,1\}^*\to\NN$ be computable.  Let $T$ be a recursively
  axiomatizable, consistent theory extending $\PA$.  For all
  $X\in\{0,1\}^\NN$ the following are pairwise equivalent.
  \begin{enumerate}
  \item $X$ is strongly $f$-random.
  \item $X$ is $M$-$f$-complex for some nonstandard $M\models T$.
  \item $X$ is not provably $\KP$-$f$-noncomplex in any recursively
    axiomatizable, consistent theory extending $T$.
  \end{enumerate}
\end{thm}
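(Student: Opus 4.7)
The plan is to reduce thm:sfrTM to Theorem thm:frPA combined with the $K=\KP$ case of Theorem thm:TM. The key observation is that the bridging condition ``$X$ is $\KP^Z$-$f$-complex for some $Z$ of $\PA$-degree'' appears (up to definitional bookkeeping) in both theorems, so chaining them closes the triangle.

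First I would identify the notion ``$X$ is $\KP^Z$-$f$-complex'' from Section sfrPA with ``$X$ is $f$-complex relative to $Z$'' in the sense of Definition dfn:fcsfc. The two conditions differ only by whether the inequality $\KP^Z(X\res n) \ge f(X\res n) - c$ is strict, which is harmless since it is absorbed into the additive constant. By the relativization of Theorem thm:rc to the oracle $Z$, this is in turn equivalent to ``$X$ is $f$-random relative to $Z$''. The relativization is routine because the proof of Theorem thm:rc uses only Kraft's Inequality and the Kraft/Chaitin Lemma, both of which hold uniformly in any oracle. Theorem thm:frPA then supplies the chain
\begin{center}
  $X$ strongly $f$-random $\liff$ $X$ is $f$-random relative to some $Z$ of $\PA$-degree $\liff$ $X$ is $\KP^Z$-$f$-complex for some $Z$ of $\PA$-degree.
\end{center}
This identifies statement 1 of thm:sfrTM with the bridging condition.

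Second, I would invoke Theorem thm:TM with $K = \KP$, read contrapositively. Its conditions 1, 2, and 3 then give a three-way equivalence between: ($1'$) $X$ is $\KP^Z$-$f$-complex for some $Z$ of $\PA$-degree; ($2'$) $X$ is $M$-$f$-complex for some nonstandard $M \models T$; and ($3'$) $X$ is not provably $\KP$-$f$-noncomplex in any recursively axiomatizable consistent extension of $T$. But ($2'$) and ($3'$) are exactly statements 2 and 3 of thm:sfrTM. Combining this three-way equivalence with the chain above yields all three pairwise equivalences of thm:sfrTM.

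There is essentially no obstacle beyond careful bookkeeping. All the mathematical content lies in the two results being combined: Theorem thm:frPA (whose proof goes through Theorem thm:sfrPA and hence the Ku\v{c}era/G\'acs Theorem) and Theorem thm:TM (cited from \cite{yokoyama-TM-arxiv-131014}). The only point requiring attention is matching the syntactic form of $\KP^Z$-$f$-complexity across Sections frsfr and sfrPA and verifying that the Schnorr-style equivalence of Theorem thm:rc relativizes uniformly to every oracle $Z$ of $\PA$-degree, both of which are immediate.
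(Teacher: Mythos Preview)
Your proposal is correct and takes essentially the same approach as the paper: bridge strong $f$-randomness to complexity relative to a $\PA$-degree via Theorem~\ref{thm:frPA}, then invoke Theorem~\ref{thm:TM} contrapositively. The only cosmetic difference is that the paper routes through $\KA$ (using Theorem~\ref{thm:srsc} and condition~2 of Theorem~\ref{thm:frPA}) while you route through $\KP$ (using relativized Theorem~\ref{thm:rc} and condition~3); since Theorem~\ref{thm:TM} is stated for both choices of $K$, this is immaterial, and your version even matches the $\KP$ appearing in statement~3 slightly more directly.
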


\begin{proof}
  By Theorems \ref{thm:srsc} and \ref{thm:frPA} $X$ is
  strongly-$f$-random if and only if $X$ is $\KA^Z$-$f$-complex
  for all $Z$ of $\PA$-degree.  The equivalences $1\liff2$ and
  $1\liff3$ then follow by Theorem \ref{thm:TM}.
\end{proof}

Define $K^Z$-length-complexity to mean $K^Z$-$f$-complexity
where $f(\sigma)=$ the length of $\sigma$, and similarly for
$M$-length-complexity and provable $\KP$-length-noncomplexity.

\begin{thm}
  \label{thm:mlrTM}
  Let $T$ be a recursively axiomatizable, consistent theory extending
  $\PA$.  For all $X\in\{0,1\}^\NN$ the following are pairwise
  equivalent.
  \begin{enumerate}
  \item $X$ is Martin-L\"of random.
  \item $X$ is $M$-length-complex for some nonstandard
    $M\models T$.
  \item $X$ is not provably $\KP$-length-noncomplex in any recursively
    axiomatizable, consistent theory extending $T$.
  \end{enumerate}
\end{thm}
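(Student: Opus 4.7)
The plan is to derive Theorem \ref{thm:mlrTM} as a direct corollary of Theorem \ref{thm:sfrTM}, specialized to the length function. First I would observe that $f:\{0,1\}^*\to\NN$ defined by $f(\sigma)=|\sigma|$ is computable, so Theorem \ref{thm:sfrTM} applies to this $f$ together with any recursively axiomatizable, consistent theory $T$ extending $\PA$.

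Second, by Remark \ref{rem:s-rand}, Martin-L\"of randomness of $X$ coincides with $1$-randomness and with strong $1$-randomness, which is exactly strong $f$-randomness for $f(\sigma)=|\sigma|$. Hence clause~1 of Theorem \ref{thm:mlrTM} is clause~1 of Theorem \ref{thm:sfrTM} for this $f$. Similarly, the notions of $M$-length-complexity and provable $\KP$-length-noncomplexity in $T$ were introduced just before the statement of Theorem \ref{thm:mlrTM} as the specializations of $M$-$f$-complexity and provable $\KP$-$f$-noncomplexity in $T$ to $f(\sigma)=|\sigma|$. Thus clauses~2 and~3 of Theorem \ref{thm:mlrTM} match clauses~2 and~3 of Theorem \ref{thm:sfrTM} verbatim for this $f$.

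With these identifications in place, the three pairwise equivalences of Theorem \ref{thm:mlrTM} follow directly from the corresponding equivalences in Theorem \ref{thm:sfrTM}. There is essentially no obstacle; the entire content of the proof is the observation that $f(\sigma)=|\sigma|$ realizes Martin-L\"of randomness as a special case of strong $f$-randomness, together with the compatibility of the length-complexity terminology with the $f$-complexity terminology.
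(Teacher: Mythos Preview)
Your proposal is correct and matches the paper's own proof essentially verbatim: the paper simply notes that Theorem \ref{thm:mlrTM} is the special case of Theorem \ref{thm:sfrTM} with $f(\sigma)=|\sigma|$. Your additional remarks about Remark \ref{rem:s-rand} and the length-complexity terminology just make explicit the identifications the paper leaves implicit.
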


\begin{proof}
  This is the special case of Theorem \ref{thm:sfrTM} with
  $f(\sigma)=|\sigma|$ for all $\sigma$.
\end{proof}

\begin{rem}
  \label{rem:TM}
  By Theorem \ref{thm:TM} our notion of provable length-noncomplexity
  is stable under relativation to a strong oracle.  Thus, letting $X$
  be Martin-L\"of random and Turing reducible to $0^{(1)}$, we see
  that $X$ is not $\KP^{(1)}$-length-complex but not provably
  $\KP^{(1)}$-noncomplex in any recursively axiomatizable, consistent
  extension $T$ of $\PA$.  It follows that for any such $T$ there
  exist $\tau\in\{0,1\}^*$ and $n\in\NN$ such that $\KP^{(1)}(\tau)<n$
  but $T\not\vdash\KP^{(1)}(\tau)<n$.  Comparing this to the
  celebrated Chaitin Incompleteness Theorem
  \cite{chaitin-inc,raat-chaitin}, we now have a somewhat different
  example of a statement which is true but not provable in $T$.
\end{rem}

\section{Non-propagation of $f$-randomness}
\label{sec:non-pfr}

In this section we show that Theorems \ref{thm:sfrXYZ} and
\ref{thm:sfrPA} and \ref{thm:sfrTM} fail if strong $f$-randomness is
replaced by $f$-randomness.

\begin{thm}
  \label{thm:non-pfr}
  For many $f$'s, e.g., $f(\sigma)=|\sigma|/2$, we can find an $X$
  which is $f$-random but not $f$-random relative to any $\PA$-degree.
\end{thm}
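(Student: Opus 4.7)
The plan is to derive the result almost immediately from Theorem \ref{thm:frPA} together with the Reimann/Stephan construction recalled in Remark \ref{rem:hudelson-complex}.

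First I would observe that Theorem \ref{thm:frPA} provides the key reduction: its equivalence $1\liff3$ says that $X$ is strongly $f$-random if and only if $X$ is $f$-random relative to some $\PA$-degree. Contrapositively, if $X$ fails to be strongly $f$-random, then $X$ is not $f$-random relative to any $\PA$-degree. So to prove the theorem it suffices to exhibit, for the given $f$, an $X$ which is $f$-random but not strongly $f$-random.

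Next I would invoke the result of Reimann/Stephan \cite{re-st-tests} cited in Remark \ref{rem:hudelson-complex}: for $f(\sigma)=|\sigma|/2$ (and for many similar computable $f$'s, namely those satisfying the ``certain other conditions'' alluded to in that remark), there exists $X\in\{0,1\}^\NN$ which is $f$-random but not strongly $f$-random. Combining this with the reduction above, any such $X$ is automatically $f$-random but not $f$-random relative to any $\PA$-degree, which is exactly what we need.

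Since the proof reduces entirely to already-established results, there is essentially no obstacle; the only question is to delineate ``many $f$'s.'' I would make this precise by stating that the theorem holds for every computable $f$ for which the separation of $f$-randomness from strong $f$-randomness is known (in particular the $f_s$ of Remark \ref{rem:s-rand} for rational $0<s<1$, via \cite{re-st-tests}). The bulk of the work, namely the separation itself, has been done in \cite{re-st-tests}; our contribution is simply to couple it with Theorem \ref{thm:frPA}. Thus the proof in the paper should consist of one sentence citing Reimann/Stephan for the existence of $X$ that is $f$-random but not strongly $f$-random, followed by one sentence applying the contrapositive of Theorem \ref{thm:frPA}.
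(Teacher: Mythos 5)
Your proposal is correct and matches the paper's proof exactly: the paper cites Reimann/Stephan \cite{re-st-tests} for an $X$ that is $f$-random but not strongly $f$-random, and then applies Theorem \ref{thm:frPA} (the $1\liff3$ direction in contrapositive form) to conclude $X$ is not $f$-random relative to any $\PA$-degree. The only difference is that you spell out the contrapositive step in more words than the paper does.
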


\begin{proof}
  By Reimann/Stephan \nocite{9asian}\cite{re-st-tests} let $X$ be
  $f$-random but not strongly $f$-random.  By Theorem \ref{thm:frPA}
  $X$ is not $f$-random relative to any $\PA$-degree.
\end{proof}

\begin{cor}
  \label{cor:non-pfr}
  For many $f$'s, e.g., $f(\sigma)=|\sigma|/2$, we can find an $X$
  which is $f$-random but provably $\KP$-$f$-noncomplex in some
  recursively axiomatizable, consistent extension of $\PA$.  Indeed,
  $X$ is provably $\KP$-$f$-noncomplex in some recursively
  axiomatizable, consistent extension of any recursively
  axiomatizable, consistent extension of $\PA$.
\end{cor}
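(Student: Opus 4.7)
The plan is to combine Theorem~\ref{thm:non-pfr} with the characterizations already proved in Theorems~\ref{thm:frPA} and~\ref{thm:sfrTM}, so essentially no new construction is needed. First I would fix an $f$ (say $f(\sigma)=|\sigma|/2$, after passing via Lemma~\ref{lem:int} to an equivalent integer-valued $\overline{f}$ so that the hypothesis $f:\{0,1\}^*\to\NN$ of Theorem~\ref{thm:sfrTM} is satisfied) and invoke Theorem~\ref{thm:non-pfr} to obtain some $X\in\{0,1\}^\NN$ which is $f$-random but not $f$-random relative to any $\PA$-degree.

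Next I would use the equivalence $1\liff3$ from Theorem~\ref{thm:frPA}: since $X$ is not $f$-random relative to any $\PA$-degree, $X$ is not strongly $f$-random. Now fix an arbitrary recursively axiomatizable, consistent extension $T$ of $\PA$ and apply Theorem~\ref{thm:sfrTM} to $X$ and $T$. Because $X$ is not strongly $f$-random, clause~(1) of that theorem fails, and so clause~(3) must fail as well: there exists a recursively axiomatizable, consistent extension of $T$ in which $X$ is provably $\KP$-$f$-noncomplex. This yields the second (``Indeed'') assertion of the corollary, and the first assertion is the special case $T=\PA$.

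There is no real obstacle here, only a bookkeeping point: one has to be sure that the $X$ produced by Theorem~\ref{thm:non-pfr} and the $f$ used in Theorem~\ref{thm:sfrTM} are the same object, and that minor issues such as integer-valuedness can be finessed via Lemma~\ref{lem:int}. Everything else is a direct chaining of equivalences already established in the paper.
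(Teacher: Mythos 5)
Your proposal is correct and takes essentially the same route as the paper. The paper's proof simply combines Theorem~\ref{thm:non-pfr} with Theorem~\ref{thm:TM} directly (tacitly translating ``$X$ not $f$-random relative to $Z$'' into ``$X$ not $\KP^Z$-$f$-complex'' via the relativized Theorem~\ref{thm:rc}); you instead route through Theorem~\ref{thm:frPA} and then Theorem~\ref{thm:sfrTM}, but since Theorem~\ref{thm:sfrTM} is itself obtained from Theorems~\ref{thm:srsc}, \ref{thm:frPA}, and \ref{thm:TM}, your chain unwinds to the same underlying ingredients, and your bookkeeping remark about passing to an integer-valued $\overline{f}$ via Lemma~\ref{lem:int} is the right technical caveat for both versions.
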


\begin{proof}
  By Theorem \ref{thm:non-pfr} let $X$ be $f$-random but not
  $f$-random relative to any $\PA$-degree.  The desired conclusion
  follows by Theorem \ref{thm:TM}.
\end{proof}

\end{document}